\let\ConnSpace\relax
\let\GauGroup\relax
\let\GauAlgebra\relax
\MyNewMathOperator{\ConnSpace}		{command={\mathbrush{C}}, sort={C}, display={$\ConnSpace(P)$}, description={Space of connections of the principal bundle $P$}}
\MyNewMathOperator{\GauGroup}			{command={\mathbrush{G}}, sort={G}, display={$\GauGroup(P)$}, description={Group of gauge transformation of the (principal) bundle $P$}}
\MyNewMathOperator{\GauAlgebra}		{command={\mathrm{L}\mathbrush{G}}, sort={LG}, display={$\GauAlgebra(P)$}, description={Lie algebra of infinitesimal gauge transformation of the principal bundle $P$}}
\NewDocumentCommand { \normalForm }{ m }{
	{#1}_{\textnormal{NF}}
}
\NewDocumentCommand { \normalFormGroup }{ m m }{
	{#2}_{{#1} \textnormal{-NF}}
}
\begin{document}
	
\MakeTitle

\tableofcontents


\section{Introduction}

Moduli spaces parametrize solutions of partial differential equations up to some natural notion of equivalence.
They play an essential role in theoretical physics as well as in pure mathematics.
In (gauge) field theory, they represent the reduced phase spaces after the (gauge) symmetry has been divided out, see \parencite{RudolphSchmidtEtAl2002} and references therein.
The topological and inherently non-perturbative aspects of the physical system are often manifested in the geometry of the moduli space.
On the other hand, the structure of the moduli space encodes some astounding topological and geometrical properties.
The topological data extracted from this auxiliary moduli space serve as a powerful nonlinear invariant of the original manifold.
Important examples can be found in many areas: gauge theory (flat connections \parencite{AtiyahBott1983}, anti-self-dual instantons \parencite{Donaldson1983}, Seiberg--Witten monopoles \parencite{SeibergWitten1994}), symplectic geometry (pseudoholomorphic curves \parencite{Gromov1985}, symplectic field theory \parencite{EliashbergGiventalHofer2000}), complex geometry \parencite{Kuranishi1965} and string theory.

For applications in both geometry and physics, a deep understanding of the local structure of the moduli space is essential.
Of particular importance is the formation of singularities due to points having a non-trivial stabilizer under the symmetry group action.
Although relying on similar techniques, so far these fundamental features are analyzed only on a case by case basis for each moduli space separately.

\emph{In this paper, we provide a general framework that gives a unified approach to these differential-geometric moduli spaces.}
Specifically, we establish a convenient normal form for a large class of nonlinear differential equations with symmetries.
Furthermore, we show that the corresponding moduli space of solutions can be endowed with the structure of a Kuranishi space, which roughly speaking means that it can be locally identified with the quotient of the zero set of a smooth map by the linear action of a compact group.
Our approach is inspired by the ideas that underlie the Lyapunov--Schmidt reduction for dynamical systems and the Kuranishi method for moduli spaces in differential geometry.
As we discuss in the last section, our general framework applies to the fundamental moduli spaces of anti-self-dual instantons, Seiberg--Witten monopoles and pseudoholomorphic curves; leading to simplified and unified proofs that these moduli spaces have natural Kuranishi charts.

Our results are phrased in terms of smooth equivariant maps between infinite-dimensional manifolds modeled on locally convex spaces.
In field theory and global analysis, the maps under consideration are usually given by partial differential operators between spaces of sections.
As such they give rise to smooth maps between appropriate Sobolev completions.
On the other hand, the symmetry action often involves compositions of maps and thus fails to be differentiable as a map between spaces of sections of a given Sobolev class.
For example, the group of diffeomorphisms of a fixed Sobolev regularity is a Banach manifold as well as a topological group but not a Lie group, because the group operation is not differentiable.
When working with smooth objects these problems disappear, and the group of smooth diffeomorphisms is a bona fide Lie group modeled on a Fréchet space.
In order to include these important cases, we consider throughout the paper infinite-dimensional manifolds modeled on Fréchet or even more general locally convex spaces.
The approach via Fréchet spaces has also the advantage that the geometric arguments in the applications are simpler, because one does not have to deal with issues originating in the low regularity of the sections or in the loss of differentiability.

Beyond the Banach context, the classical Banach Inverse Function Theorem used in the proof of \cref{prop:normalFormMap:banach} has to be replaced by a different version.
We will use Glöckner's Inverse Function Theorem for maps between Banach spaces with parameters in a locally convex space and the Nash--Moser Theorem in the tame Fréchet setting.
Moreover, the normal form theorems are phrased and proved in a modularized way, leading to a flexible general framework in which other analytical setups can be included in a \textquote{plug and play} fashion based on other Inverse Function Theorems.

As another approach to the issues that composition is not smooth relative to a fixed Sobolev regularity, Hofer, Wysocki and Zehnder have introduced the scale calculus, see, \eg, \parencite{HoferWysockiZehnder2014,HoferWysockiZehnder2017} and references therein.
In this approach, one works with a sequence of Banach spaces while the Nash--Moser approach focuses on the limit space, see \parencite{Gerstenberger2016} for a detailed comparison.
The scale calculus is tailored to the elliptic setting one encounters in symplectic field theory.
We emphasize that the Nash--Moser theorem covers these cases as well, see \cref{sec:normalFormMap:elliptic,sec:pseudoholomorphicCurves}, but additionally allows for applications that go well beyond the elliptic setting.
This will be crucial for problems where classical Banach space methods do not apply and more sophisticated Nash--Moser arguments are needed, such as deformation and normal form problems in the theory of fibrations \parencite{Hamilton1978} and Poisson manifolds \parencite{Conn1985,Marcut2014}.  
Finally, the general framework developed in this paper lays the foundation for the singular symplectic reduction in infinite dimensions, which will be presented elsewhere \parencite{DiezThesis,DiezSingularReduction}.

The paper is structured as follows.
\begin{description}[leftmargin=0cm]
\item[\cref{sec:normalFormLinearMap}]
	We begin by considering the linear setting and determine under which conditions a continuous linear map between locally convex spaces can be brought into a normal form, that is, it factorizes through a topological isomorphism.
	An operator admitting such a factorization is called regular.
	Fredholm operators, and in particular elliptic operators, are important examples of regular operators.
	As a preparation for the nonlinear case, we also discuss regularity of families of linear maps depending continuously on a parameter and of chain complexes.
\item[\cref{sec:normalFormMap}]
	Next, we discuss the local behavior of a smooth map between locally convex manifolds.
	Unifying the concepts of immersion, submersion and subimmersion in one framework, the notion of a normal form of a nonlinear map is introduced.
	Using versions of the Inverse Function Theorem, we establish \cref{prop:normalFormMap:finiteDim,prop:normalFormMap:banachTarget,prop:normalFormMap:banachDomain,prop::normalFormMap:tameFrechet,prop::normalFormMap:elliptic} which show that a given map can be brought into such a normal form in various functional-analytic settings under suitable conditions. 
	These normal form theorems provide a unified approach to the Immersion Theorem, the Level Set Theorem and the Constant Rank Theorem in the setting of locally convex manifolds and tame Fréchet manifolds.
\item[\cref{sec:normalFormEquivariantMap}]
	We introduce the concept of an equivariant normal form and provide suitable conditions which ensure that an equivariant map can be brought into such a normal form, resulting in \cref{prop:normalFormEquivariantMap:abstract} and its variants.
	Besides the normal form results of \cref{sec:normalFormMap}, the main technical tool is the existence of slices for actions on Fréchet manifolds as established in \parencite{DiezSlice}.
	We investigate the local structure of the moduli space obtained by taking the quotient of a level set of the equivariant map by the group action.
	Under the assumption that the map can be brought into an equivariant normal form, the corresponding moduli space has the structure of a Kuranishi space, which roughly speaking means that it can be locally identified with the quotient of the zero set of a smooth map with respect to the linear action of a compact group.
	Moreover, we find additional conditions on the normal form which ensure that the moduli space is stratified by orbit types.
	Finally, to show the utility of this novel framework, we apply the general theory to the example of the moduli space of anti-self-dual Yang–Mills connections, to the Seiberg--Witten moduli space and to the moduli space of pseudoholomorphic curves.
	The upshot is that these moduli spaces have natural Kuranishi charts.
	Since no Sobolev techniques are used, the arguments are streamlined and simplified.
	For examples, we do not face any issues coming from functions of low regularity that complicate the analysis in the usual Banach approach.
\item[Appendix]
	In the appendix, we summarize without proofs the relevant background material concerning the calculus of infinite-dimensional manifolds with a primary focus on the Inverse Function Theorem as well as on Lie group actions.
\end{description}
	
Most of the material first appeared in the first author's thesis \parencite{DiezThesis}.

\paragraph*{Conventions}
Our main references for terminology and notation in the framework of infinite-dimensional differential geometry are~\parencite{Hamilton1982} for the tame Fréchet category and~\parencite{Neeb2006} for the general locally convex case.
As the latter is our standard setting, we use the word \textquote{manifold} to refer to an infinite-dimensional locally convex manifold; further assumptions on the model space are designated by additional qualifier such as \textquote{Fréchet or finite-dimensional manifold}.
For a Lie group action \( \Upsilon: G \times M \to M \), we also use the \textquote{dot notation} and abbreviate \( \Upsilon(g,m) \equiv g \cdot m \) for \( g \in G \) and \( m \in M \).
On the infinitesimal level, for \( A \in \LieA{g} \), we write the fundamental vector field \( A^* \) as \( A^*_m = \tangent_e \Upsilon_m (A) \equiv A \ldot m \). 
Similarly, \( \tangent_m \Upsilon_g (X) \equiv g \ldot X \) for \( X \in \TBundle_m M \).

\paragraph*{Acknowledgments}
T.~Diez was supported by the NWO grant 639.032.734 \textquote{Cohomology and representation theory of infinite dimensional Lie groups}.
We gratefully acknowledge support of the Max Planck Institute for Mathematics in the Sciences in Leipzig and of the University of Leipzig.

\section{Normal form of a linear map}
\label{sec:normalFormLinearMap}
In this section, we discuss the normal form of continuous linear maps between locally convex spaces.
Recall that every \( (m \times n) \)-matrix \( T \) with rank \( r \) can be written in the following normal form
\begin{equation}
	\label{eq:normalFormLinearMap:finiteDimFactorization}
	T = P \Matrix{0 & 0 \\ 0 & \one_{r \times r}} Q,
\end{equation} 
where \( P \) and \( Q \) are invertible matrices of type \( (m \times m) \) and \( (n \times n) \), respectively.
As we will see, a similar factorization is possible for continuous linear maps between locally convex spaces, which are relatively open and whose kernel and image are closed complemented subspaces.
We call such operators regular and their associated representation~\eqref{eq:normalFormLinearMap:finiteDimFactorization} a normal form.
As a preparation for the nonlinear case, we define and study regularity of families of linear maps depending continuously on a parameter.
With a view toward applications, we give a brief overview of the theory of Fredholm operators and of elliptic operators in the locally convex framework and, in particular, show that these operators are regular.

\subsection{Uniform regularity}
\label{sec:familyGeneralizedInverse}
Let \( X \) and \( Y \) be locally convex spaces, and let \( P \) be a neighborhood of \( 0 \) in some locally convex space.
A continuous map \( T: P \times X \to Y \) is called a \emphDef{continuous family of linear maps} if, for all \( p \in P \), the induced map \( T_p \equiv T(p, \cdot): X \to Y \) is linear.

\begin{defn}
	\label{defn:familyGeneralizedInverse:uniformRegular}
	A continuous family \( T: P \times X \to Y \) of linear maps between locally convex spaces \( X \) and \( Y \) is called \emphDef{uniformly regular} (at \( 0 \)) if there exist topological decompositions
	\begin{equation}
		\label{eq::normalFormMap:decompositionLinerSpaceKernelImage}
		X = \ker T_0 \oplus \coimg T_0, \qquad Y = \coker T_0 \oplus \img T_0,
	\end{equation}
	where \( \coimg T_0 \) and \( \coker T_0 \) are closed subspaces\footnotemark{} of \( X \) and \( Y \), and, for every \( p \in P \), the map \( \tilde{T}_p = \pr_{\img T_0} \circ \restr{(T_p)}{\coimg T_0}: \coimg T_0 \to \img T_0 \) is a topological isomorphism and the inverses \( \tilde{T}^{-1}_p \) form a continuous family \( P \times \img T_0 \to \coimg T_0 \) of isomorphisms.
	\footnotetext{The coimage and cokernel of an continuous linear map \( T: X \to Y \) are defined as \( \coimg T = X \slash \ker T \) and \( \coker T = Y \slash \img T \), respectively. There exists, of course, no canonical realization of these quotient spaces as subspaces of \( X \) and \( Y \). Nonetheless, the choice of topological complements \( A \) and \( B \) of \( \ker T \) and \( \img T \), respectively, leads to the identifications \( \coimg T \isomorph A \) and \( \coker T \isomorph B \). It is in this sense and with a slight abuse of notation that we view \( \coimg T \) and \( \coker T \) as subspaces of \( X \) and \( Y \), respectively.}
\end{defn}
In the case \( P = \set{0} \), the notion of uniform regularity reduces to the notion of a relatively open operator \( T: X \to Y \) with closed complemented kernel and image. 
We refer to this situation by saying that \( T \) is a \emphDef{regular} operator\footnotemark{}.
\footnotetext{We use the word \enquote{operator} interchangeably with \enquote{continuous linear map}.}
Every regular operator \( T \) can be written in a normal form similar to~\eqref{eq:normalFormLinearMap:finiteDimFactorization}: 
\begin{equation}
	T  = P \Matrix{0 & 0 \\ 0 & \tilde{T}} Q,
\end{equation}
where \( Q: X \to \ker T \oplus \coimg T \) and \( P: \coker T \oplus \img T \to Y \) are the natural isomorphisms determined by the decompositions~\eqref{eq::normalFormMap:decompositionLinerSpaceKernelImage}, and \( \tilde{T}: \coimg T \to \img T \) is a topological isomorphism.
We call \( \tilde{T} \) (together with the isomorphisms \( Q \) and \( P \)) a normal form of \( T \).

If the space of invertible maps is open in the space of all continuous linear maps, then, for every continuous family \( T: P \times X \to Y \) with \( T_0 \) being regular, one can shrink \( P \) to pass to a uniformly regular family.
This openness property fails when one leaves the Banach realm.
However, when it does hold, uniform regularity reduces to a condition at one point. 
\begin{lemma}
	\label{prop::normalFormLinearMap:generalizedInverseWhenImageFiniteDim}
	\label{prop::normalFormLinearMap:generalizedInverseWhenTargetFiniteDim}
	\label{prop::normalFormLinearMap:generalizedInverseWhenDomainFiniteDim}
	Let \( T: P \times X \to Y \) be a continuous family of linear maps between locally convex spaces \( X \) and \( Y \). 
	If \( \img T_0 \) is finite-dimensional, then \( T \) is uniformly regular after possibly shrinking \( P \).
\end{lemma}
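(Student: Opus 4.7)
The plan is to reduce uniform regularity to a statement about finite matrices, using that every finite-dimensional subspace of a (Hausdorff) locally convex space is topologically complemented via Hahn--Banach.

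First, I would set up the decomposition of \( Y \). Let \( r = \dim \img T_0 \) and choose a basis \( e_1, \ldots, e_r \) of \( \img T_0 \). By Hahn--Banach, extend the corresponding dual basis to continuous linear functionals \( \ell_1, \ldots, \ell_r \in Y^* \) satisfying \( \ell_i(e_j) = \delta_{ij} \). Put \( \coker T_0 := \bigcap_i \ker \ell_i \); this is closed, and the continuous projection \( \pr_{\img T_0}(y) = \sum_i \ell_i(y) \, e_i \) realizes the topological decomposition \( Y = \coker T_0 \oplus \img T_0 \).

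Next, for the decomposition of \( X \), pick any preimages \( x_1, \ldots, x_r \in X \) with \( T_0 x_i = e_i \) and set \( \coimg T_0 := \spn\{x_1, \ldots, x_r\} \). The formula
\[
	\pr_{\coimg T_0}(x) = \sum_i \ell_i(T_0 x) \, x_i
\]
defines a continuous projection onto \( \coimg T_0 \) whose kernel is precisely \( \ker T_0 \), yielding the topological decomposition \( X = \ker T_0 \oplus \coimg T_0 \). By construction, \( \tilde{T}_0 = \restr{T_0}{\coimg T_0} \) sends \( x_i \) to \( e_i \) and hence is a topological isomorphism.

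It remains to handle the parameter dependence, which is now a purely finite-dimensional argument. In the bases \( (x_j) \) and \( (e_i) \), the map \( \tilde{T}_p \) is represented by the matrix \( A_{ij}(p) := \ell_i(T_p x_j) \), which is continuous in \( p \) (since \( T \) is jointly continuous and \( \ell_i \) is continuous) and satisfies \( A_{ij}(0) = \delta_{ij} \). Thus \( \det A(p) \) is continuous and nonzero at \( 0 \), so after shrinking \( P \) we may assume \( A(p) \) is invertible for all \( p \in P \), with \( p \mapsto A(p)^{-1} \) continuous. The inverses then assemble into a continuous family \( P \times \img T_0 \to \coimg T_0 \) via \( \tilde{T}_p^{-1}\bigl(\sum_i \eta_i e_i\bigr) = \sum_j \bigl(A(p)^{-1} \eta\bigr)_j \, x_j \), which establishes uniform regularity.

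There is no real obstacle beyond verifying that the decompositions are honestly topological — the key input is Hahn--Banach, used twice (once to complement \( \img T_0 \) in \( Y \), and once implicitly through the continuity of the functionals \( \ell_i \circ T_0 \) used to define the projection onto \( \coimg T_0 \)). Finite-dimensionality of \( \img T_0 \) is essential both for the existence of the dual basis and for reducing the inversion problem to continuous invertibility of a matrix-valued function, where openness of the invertible set is automatic.
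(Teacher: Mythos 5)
Your proof is correct and follows essentially the same route as the paper: complement $\img T_0$ and $\ker T_0$ topologically using finite-(co)dimensionality, then exploit the openness of invertible matrices in finite dimensions to shrink $P$. The only difference is presentational — the paper cites Köthe for the existence of the topological complements, while you construct the projections explicitly via Hahn--Banach functionals $\ell_i$, which is a nice, self-contained way to see where the finite-dimensionality of $\img T_0$ enters.
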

\begin{proof}
	Since \( T_0 \) has a finite-dimensional range, by \parencite[Proposition~20.5.5]{Koethe1983}, \( \img T_0 \) is closed and has a topological complement.
	Moreover, \( \ker T_0 \) has finite codimension in \( X \) and hence is topologically complemented according to \parencite[Proposition~15.8.2]{Koethe1983}.
	The maps \( \tilde{T}_p: \coimg T_0 \to \img T_0 \) are continuous linear maps between finite-dimensional spaces.
	Since \( \tilde{T}_0 \) is a bijection, the openness of the set of invertible operators implies that \( \tilde{T}_p \) is a topological isomorphism for \( p \in P \) close enough to \( 0 \).
\end{proof}

Uniform regularity implies a semi-continuity property of the kernel and the image.
Similar semi-continuity properties are well known for families of Fredholm operators between Banach spaces \parencite[Corollary~19.1.6]{Hoermander2007}.
\begin{lemma}
	Let \( T: P \times X \to Y \) be a continuous family of linear maps between locally convex spaces \( X \) and \( Y \).
	If \( T \) is uniformly regular, then the following holds:
	\begin{thmenumerate}
		\item
			The kernel of \( T \) is upper semi-continuous at \( 0 \) in the sense that \( \ker T_p \subseteq \ker T_0 \) for all \( p \in P \).
		\item
			The image of \( T \) is lower semi-continuous at \( 0 \) in the sense that \( \img T_p \supseteq \img T_0 \) for all \( p \in P \).
			\qedhere
	\end{thmenumerate}
\end{lemma}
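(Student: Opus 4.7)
The plan is to exploit the direct sum decompositions $X = \ker T_0 \oplus \coimg T_0$ and $Y = \coker T_0 \oplus \img T_0$ supplied by uniform regularity and reduce both claims to the bijectivity of the model isomorphism $\tilde T_p \colon \coimg T_0 \to \img T_0$.

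For part~(i), given $x \in \ker T_p$, I would split $x = k + c$ with $k \in \ker T_0$ and $c \in \coimg T_0$ and try to show $c = 0$. Applying $T_p$ to this decomposition and then projecting onto $\img T_0$ along $\coker T_0$ yields the key identity $\tilde T_p(c) = -\pr_{\img T_0}(T_p(k))$. Showing that the right-hand side vanishes then forces $c = 0$ by injectivity of $\tilde T_p$, so $x = k \in \ker T_0$. For part~(ii), given $y \in \img T_0$, I would set $c = \tilde T_p^{-1}(y) \in \coimg T_0$, so that $\pr_{\img T_0}(T_p(c)) = y$ by construction, and then argue that the $\coker T_0$-component of $T_p(c)$ vanishes, producing $T_p(c) = y$ and hence $y \in \img T_p$.

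In both cases, the main obstacle is that the definition of uniform regularity directly constrains only the $(\coimg T_0, \img T_0)$-block $\tilde T_p$ of $T_p$ and a priori says nothing about the other three blocks with respect to the chosen splittings, in particular the off-diagonal pieces $\pr_{\img T_0} \circ T_p \restriction_{\ker T_0}$ and $\pr_{\coker T_0} \circ T_p \restriction_{\coimg T_0}$. The technical heart of the proof therefore lies in establishing, from the uniform regularity hypothesis together with the regularity of $T_0$, a block-diagonal (or at least block-triangular) representation of $T_p$ adapted to the decompositions associated with $T_0$. Once this structural fact is available, both semi-continuity statements follow immediately from the injectivity and surjectivity of $\tilde T_p$ via the decomposition arguments sketched above.
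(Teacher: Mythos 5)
Your sketch correctly isolates the obstacle, but stops just short of the right conclusion: the obstacle cannot be overcome, because the lemma as stated is false, and the paper's one-sentence proof simply asserts the very implication you were unable to establish. Uniform regularity constrains only the block $\tilde T_p = \pr_{\img T_0} \circ \restr{(T_p)}{\coimg T_0}$; the off-diagonal blocks $\pr_{\img T_0} \circ \restr{(T_p)}{\ker T_0}$ and $\pr_{\coker T_0} \circ \restr{(T_p)}{\coimg T_0}$ are unconstrained, and these are what govern the two stated inclusions. A concrete counterexample: take $X = Y = \R^2$,
\begin{equation*}
T_p = \begin{pmatrix} 1 & p \\ p & p^2 \end{pmatrix}, \qquad \ker T_0 = \coker T_0 = \R e_2, \qquad \coimg T_0 = \img T_0 = \R e_1.
\end{equation*}
Then $\tilde T_p(e_1) = \pr_{\img T_0}(e_1 + p e_2) = e_1$ for every $p$, so $\tilde T_p = \id$ and the family is uniformly regular with constant inverse family; yet $\ker T_p = \R(e_2 - p e_1) \not\subseteq \R e_2$ and $\img T_p = \R(e_1 + p e_2) \not\supseteq \R e_1$ for every $p \neq 0$.

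What uniform regularity does give is the dimensional version of semi-continuity, which is what the cited Hörmander result for Fredholm families actually asserts. Injectivity of $\tilde T_p$ gives $\ker T_p \cap \coimg T_0 = 0$, so $\pr_{\ker T_0}$ is injective on $\ker T_p$ and $\dim \ker T_p \leq \dim \ker T_0$; surjectivity of $\tilde T_p$ gives $\img T_p + \coker T_0 = Y$, so $\coker T_0$ surjects onto $Y/\img T_p$ and $\dim \coker T_p \leq \dim \coker T_0$. In your language, the identity $\tilde T_p(c) = -\pr_{\img T_0}(T_p(k))$ is precisely the obstruction, and the example shows it has no reason to vanish. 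The block-triangular structure you hoped to extract from the hypotheses does not exist, and the lemma should be restated in terms of dimensions (equivalently, in terms of $\ker T_p \cap \coimg T_0 = 0$ and $\img T_p + \coker T_0 = Y$).
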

\begin{proof}
	The inclusions \( \ker T_p \subseteq \ker T_0 \) and \( \img T_p \supseteq \img T_0 \) need to be valid, because otherwise \( \tilde{T}_p = \pr_{\img T_0} \circ \restr{(T_p)}{\coimg T_0} \) cannot be an isomorphism from \( \coimg T_0 \) to \( \img T_0 \).
\end{proof}
Uniform regularity is tightly connected to the invertibility of an extended operator.
\begin{thm}
	\label{prop:normalFormLinearMap:uniformRegularCharacterization}
	Let \( T: P \times X \to Y \) be a continuous family of linear maps between locally convex spaces \( X \) and \( Y \).
	Then, the following are equivalent:
	\begin{enumerate}
	 	\item
	 		\label{prop:normalFormLinearMap:uniformRegularCharacterization:regular}
	 		\( T \) is uniformly regular.
	 	\item
	 		\label{prop:normalFormLinearMap:uniformRegularCharacterization:external}
	 		There exist locally convex spaces \( Z^\pm \), continuous linear maps \( T^+: Z^+ \to Y \) and \( T^-: X \to Z^- \), and continuous families of linear maps \( S: P \times Y \to X \), \( S^-: P \times Z^- \to X \), \( S^+: P \times Y \to Z^+ \) and \( S^{-+}: P \times Z^- \to Z^+ \) with \( S^{-+}_0 = 0 \) such that
	 		\begin{equation}
	 			\label{prop:normalFormLinearMap:uniformRegularCharacterization:external:inverse}
	 			\Matrix{T_p & T^+ \\ T^- & 0}^{-1} = \Matrix{S_p & S^-_p \\ S^+_p & S^{-+}_p},
	 		\end{equation}
			holds for all \( p \in P \) and such that the operators 
	 		\begin{equation}
	 			\Gamma_p \equiv \Matrix{
					\pr_{\ker T_0} \circ \restr{(S_p)}{\coker T_0}
					&
					\pr_{\ker T_0} \circ S^-_p
					\\
					\restr{(S^+_p)}{\coker T_0}
					&
					S^{-+}_p
				}: \coker T_0 \oplus Z^- \to \ker T_0 \oplus Z^+
	 		\end{equation}
	 		are invertible for all \( p \in P \) and their inverses form a continuous family.
	 		\qedhere
	 \end{enumerate}
\end{thm}
For the proof, we need the following basic result about the invertibility of block matrices in terms of the Schur complement.
\begin{lemma}
	\label{prop:normalFormLinearMap:inverseBlockMatrix}
	Let \( A_{11}: X_1 \to Y_1 \), \( A_{12}: X_2 \to Y_1 \), \( A_{21}: X_1 \to Y_2 \) and \( A_{22}: X_2 \to Y_2 \) be continuous linear maps between locally convex spaces such that
	\begin{equation}
		\Matrix{A_{11} & A_{12} \\ A_{21} & A_{22}}^{-1} = \Matrix{B_{11} & B_{12} \\ B_{21} & B_{22}}
	\end{equation}
	for continuous linear maps \( B_{ij} \) for \( i,j = 1,2 \).
	\begin{thmenumerate}
		\item
			\label{prop:normalFormLinearMap:inverseBlockMatrix:schurComplement}
			If \( B_{22} \) is a topological isomorphism, then so is \( A_{11} \), and its inverse is given by
			\begin{equation}
				A_{11}^{-1} = B_{11}^{} - B_{12}^{} B_{22}^{-1} B_{21}^{}.
			\end{equation}
		\item
			\label{prop:normalFormLinearMap:inverseBlockMatrix:projection}
			If \( B_{22} = 0 \), then \( A_{11} B_{11} \) and \( B_{11} A_{11} \) are idempotent and satisfy
			\begin{equation}
				\img \,(A_{11} B_{11}) = \img A_{11}, \qquad \quad \ker \, (B_{11} A_{11}) = \ker A_{11}.
				\qedhere
			\end{equation}
	\end{thmenumerate}
\end{lemma}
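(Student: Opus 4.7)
The plan is to read off the eight block equations coming from the two identities
$\left(\begin{smallmatrix} A_{11} & A_{12} \\ A_{21} & A_{22}\end{smallmatrix}\right)\left(\begin{smallmatrix} B_{11} & B_{12} \\ B_{21} & B_{22}\end{smallmatrix}\right) = \mathbf{1}$ and $\left(\begin{smallmatrix} B_{11} & B_{12} \\ B_{21} & B_{22}\end{smallmatrix}\right)\left(\begin{smallmatrix} A_{11} & A_{12} \\ A_{21} & A_{22}\end{smallmatrix}\right) = \mathbf{1}$, and to manipulate them directly. The four that will do all the work are
\[
A_{11}B_{11} + A_{12}B_{21} = \mathbf{1}, \quad A_{11}B_{12} + A_{12}B_{22} = 0, \quad B_{11}A_{11} + B_{12}A_{21} = \mathbf{1}, \quad B_{21}A_{11} + B_{22}A_{21} = 0.
\]

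For part \ref{prop:normalFormLinearMap:inverseBlockMatrix:schurComplement} I would simply verify that $C := B_{11} - B_{12}B_{22}^{-1}B_{21}$ is a two-sided inverse of $A_{11}$. Using $A_{11}B_{12} = -A_{12}B_{22}$ (so $A_{11}B_{12}B_{22}^{-1} = -A_{12}$) and the first identity gives
\[
A_{11}C = A_{11}B_{11} + A_{12}B_{21} = \mathbf{1}.
\]
Symmetrically, $B_{21}A_{11} = -B_{22}A_{21}$ yields $B_{12}B_{22}^{-1}B_{21}A_{11} = -B_{12}A_{21}$ and therefore $CA_{11} = B_{11}A_{11} + B_{12}A_{21} = \mathbf{1}$. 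Since $C$ is continuous, $A_{11}$ is a topological isomorphism with inverse $C$; the continuity of the inverse is automatic because $C$ is built from continuous operators.

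For part \ref{prop:normalFormLinearMap:inverseBlockMatrix:projection} the hypothesis $B_{22}=0$ collapses the second and fourth equations above to $A_{11}B_{12} = 0$ and $B_{21}A_{11} = 0$. Squaring then gives
\[
(A_{11}B_{11})^2 = A_{11}(\mathbf{1} - B_{12}A_{21})B_{11} = A_{11}B_{11} - (A_{11}B_{12})A_{21}B_{11} = A_{11}B_{11},
\]
and analogously $(B_{11}A_{11})^2 = B_{11}A_{11} - B_{11}A_{12}(B_{21}A_{11}) = B_{11}A_{11}$, so both composites are idempotent. The same two vanishings also yield the crucial identity $A_{11}B_{11}A_{11} = A_{11}(\mathbf{1} - B_{12}A_{21}) = A_{11}$, from which $\operatorname{img} A_{11} \subseteq \operatorname{img}(A_{11}B_{11})$ and $\ker(B_{11}A_{11}) \subseteq \ker A_{11}$ follow immediately; the opposite inclusions are trivial.

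There is really no hard step here—the whole argument is a short algebraic massage of the eight block identities, and I expect the only thing to be careful about is bookkeeping the order of the factors (everything is non-commutative, and one must resist the reflex of writing $B_{22}^{-1}B_{12}$ in place of $B_{12}B_{22}^{-1}$ etc.). The lemma is purely formal and no topological subtleties enter beyond the observation that continuity of the candidate inverse $C$ is inherited from continuity of its constituent blocks.
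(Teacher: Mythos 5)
Your proof is correct and takes essentially the same approach as the paper: the paper defers part (i) to a reference as a "direct calculation" and supplies for part (ii) precisely the identity $A_{11}B_{11}A_{11} = A_{11}$ (derived from $B_{11}A_{11} + B_{12}A_{21} = \id$ and $A_{11}B_{12} = 0$), which you also use. The only cosmetic difference is that you establish the two idempotence claims by two parallel calculations, whereas both follow at once from the single identity $A_{11}B_{11}A_{11} = A_{11}$ by multiplying on the left or right.
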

\begin{proof}
	The first statement is \parencite[Lemma~3.1]{SjoestrandZworski2007} and can be verified by a direct calculation.
	The second statement follows from the identity
	\begin{equation}
		A_{11} B_{11} A_{11} = A_{11} (\id_{X_1} - B_{12}A_{21}) = A_{11},
	\end{equation}
	where we used \( B_{11} A_{11} + B_{12} A_{21} = \id_{X_1} \) and \( A_{11} B_{12} = 0 \).
\end{proof}

\begin{proof}[Proof of \cref{prop:normalFormLinearMap:uniformRegularCharacterization}]
	First, suppose that \( T \) is a uniformly regular family of linear maps.
	Then, by definition, we have topological decompositions \( X = \ker T_0 \oplus \coimg T_0 \), \( Y = \coker T_0 \oplus \img T_0 \)
	and, for every \( p \in P \), the map 
	\begin{equation}
		\tilde{T}_p = \pr_{\img T_0} \circ \restr{(T_p)}{\coimg T_0}
	\end{equation}
	is a topological isomorphism.
	Set \( Z^+ = \coker T_0 \) and \( Z^- = \ker T_0 \), and let \( T^+: Z^+ \to Y \) and \( T^-: X \to Z^- \) be the canonical inclusion and projection, respectively.
	Moreover, let \( S_p \defeq \tilde{T}^{-1}_p \circ \pr_{\img T_0}: Y \to X \), and define \( S^{-+}_p: Z^- \to Z^+ \) by
	\begin{equation}
		S^{-+}_p = \pr_{\coker T_0} \circ (T_p \circ \tilde{T}^{-1}_p \circ \pr_{\img T_0} - \id_Y) \circ \restr{(T_p)}{\ker T_0}\,.
	\end{equation}
	Finally, define \( S^\pm_p \) by
	\begin{subequations}\begin{align}
	 	S^+_p &= \pr_{\coker T_0} \circ (\id_Y - T_p \circ S_p): Y \to Z^+,
	 	\\
	 	S^-_p &= \restr{(\id_X - S_p \circ T_p)}{\ker T_0}: Z^- \to X.
	\end{align}\end{subequations}
	Since, by definition, the inverses \( \tilde{T}^{-1}_p \) form a continuous family \( P \times \img T_0 \to \coimg T_0 \), the families \( S, S^\pm, S^{-+} \) are continuous.
	Furthermore, a direct calculation yields
	\begin{subequations}\label{eq:normalFormLinearMap:uniformRegularCharacterization:projection}\begin{align}
		T_p \circ S_p &= \pr_{\img T_0} + \pr_{\coker T_0} \circ T_p \circ \tilde{T}^{-1}_p \circ \pr_{\img T_0},
		\\
		S_p \circ T_p &= \pr_{\coimg T_0} + \tilde{T}^{-1}_p \circ \pr_{\img T_0} \circ \restr{(T_p)}{\ker T_0}.
	\end{align}\end{subequations}
	Using these identities, it is straightforward to check that~\eqref{prop:normalFormLinearMap:uniformRegularCharacterization:external:inverse} holds for every \( p \in P \).
	Moreover, we have
	\begin{equation}
		\Gamma_p = 
		\Matrix{
			\pr_{\ker T_0} \circ \restr{(S_p)}{\coker T_0}
			&
			\pr_{\ker T_0} \circ S^-_p
			\\
			\restr{(S^+_p)}{\coker T_0}
			&
			S^{-+}_p
		}
		=
		\Matrix{
			0
			&
			\id_{\ker T_0}
			\\
			\id_{\coker T_0}
			&
			S^{-+}_p
		},
	\end{equation}
	which is clearly invertible with a continuous family of inverses given by
	\begin{equation}
		\Gamma^{\, -1}_p = \smallMatrix{
			- S^{-+}_p
			&
			\id_{\coker T_0}
			\\
			\id_{\ker T_0}
			&
			0
		}
		.
	\end{equation}

	Conversely, let \( T^\pm, S, S^\pm \) and \( S^{-+} \) satisfying the assumptions of the second statement of \cref{prop:normalFormLinearMap:uniformRegularCharacterization}.
	Since \( S^{-+}_0 = 0 \), \cref{prop:normalFormLinearMap:inverseBlockMatrix:projection} implies that \( T_0 \circ S_0 \) and \( S_0 \circ T_0 \) are idempotent with \( \img T_0 \circ S_0 = \img T_0 \) and \( \ker S_0 \circ T_0 = \ker T_0 \).
	Hence, \( \ker T_0 \) and \( \img T_0 \) are images of continuous idempotent operators, and as such they are closed and topologically complemented according to \parencite[Proposition~15.8.1]{Koethe1983}.
	As above, denote the complements by \( \coimg T_0 \) and \( \coker T_0 \), respectively.
	It remains to show that \( \tilde{T}_p = \pr_{\img T_0} \circ \restr{(T_p)}{\coimg T_0} \) is a topological isomorphism for all \( p \in P \), and that \( \tilde{T}_p^{-1} \) form a continuous family.
	For this purpose, we write all operators in block form with respect to the decompositions \( X = \coimg T_0 \oplus \ker T_0 \) and \( Y = \img T_0 \oplus \coker T_0 \) (note the different order of the summands).
	Using this convention, the identity~\eqref{prop:normalFormLinearMap:uniformRegularCharacterization:external:inverse} becomes
	\begin{equation}\begin{split}
		\Matrix{
			\tilde{T}_p
			&
			\pr_{\img T_0} \circ \restr{(T_p)}{\ker T_0}
			&
			\pr_{\img T_0} \circ T^+
			\\
			\pr_{\coker T_0} \circ \restr{(T_p)}{\coimg T_0}
			&
			\pr_{\coker T_0} \circ \restr{(T_p)}{\ker T_0}
			&
			\pr_{\coker T_0} \circ T^+
			\\
			\restr{(T^-)}{\coimg T_0}
			&
			\restr{(T^-)}{\ker T_0}
			&
			0
		}^{-1}
		=
		\\
		\qquad
		\Matrix{
			\pr_{\coimg T_0} \circ \restr{(S_p)}{\img T_0}
			&
			\pr_{\coimg T_0} \circ \restr{(S_p)}{\coker T_0}
			&
			\pr_{\coimg T_0} \circ S^-_p
			\\
			\pr_{\ker T_0} \circ \restr{(S_p)}{\img T_0}
			&
			\pr_{\ker T_0} \circ \restr{(S_p)}{\coker T_0}
			&
			\pr_{\ker T_0} \circ S^-_p
			\\
			\restr{(S^+_p)}{\img T_0}
			&
			\restr{(S^+_p)}{\coker T_0}
			&
			S^{-+}_p
		}.
	\end{split}\end{equation}
	These matrices should be read as operators from \( \img T_0 \oplus \coker T_0 \oplus Z^- \) to \( \coimg T_0 \oplus \ker T_0 \oplus Z^+ \).
	Note that the lower right two-times-two block of the right-hand side coincides with the operator \( \Gamma_p \).
	Since \( \Gamma_{p} \) is invertible by assumption, \cref{prop:normalFormLinearMap:inverseBlockMatrix:schurComplement} shows that \( \tilde{T}_p \) is invertible, too.
	Moreover, the inverses are given by 
	\begin{equation}\begin{split}
		\tilde{T}_p^{-1} 
			&= \pr_{\coimg T_0} \circ \restr{(S_p)}{\img T_0} 
			\\
			&\qquad- 
				\Vector{\pr_{\coimg T_0} \circ \restr{(S_p)}{\coker T_0} \\ \pr_{\coimg T_0} \circ S^-_p}
				\Gamma^{\, -1}_{p} 
				\Vector{\pr_{\ker T_0} \circ \restr{(S_p)}{\img T_0} \\ \restr{(S^+_p)}{\img T_0}}
	\end{split}\end{equation}
	and thus form a continuous family \( P \times \img T_0 \to \coimg T_0 \).
	Hence, \( T \) is uniformly regular.
\end{proof}
For the special case of a single operator, we obtain the following.
\begin{coro}
	\label{prop:normalFormLinearMap:regularCharacterization}
	A continuous linear map \( T: X \to Y \) between locally convex spaces is regular if and only if there exist locally convex spaces \( Z^\pm \), continuous linear maps \( T^+: Z^+ \to Y \), \( T^-: X \to Z^- \) and \( S: Y \to X \), \( S^-: Z^- \to X \), \( S^+: Y \to Z^+ \) such that
	\begin{equation}
		\Matrix{T & T^+ \\ T^- & 0}^{-1} = \Matrix{S & S^- \\ S^+ & 0}.
		\qedhere
	\end{equation}
\end{coro}
\begin{remark}
	In the setting of \cref{prop:normalFormLinearMap:regularCharacterization}, it is straightforward to verify that \( T \circ S \circ T = T \) holds.
	An operator \( S \) satisfying such a relation is called a generalized inverse of \( T \), \cf \parencite{Rao1962,Moore1920,Penrose1955}.
	In fact, one can show that regularity of an operator is equivalent to the existence of a generalized inverse.
	Since we do not need this point of view in the remainder, we refer to \parencite{DiezThesis,Harte1987} for further details.
\end{remark}
\begin{remark}[Uniform regularity in the tame Fréchet category]
	It is clear that a version of \cref{prop:normalFormLinearMap:uniformRegularCharacterization} holds in the tame Fréchet category if the word \enquote{tame} is inserted in the right places (see \cref{sec:tameFrechet} for a brief overview of the main concepts of tame Fréchet spaces).
	Let us spell out the details.
	
	Let \( X \) and \( Y \) be tame Fréchet spaces and let \( T: P \times X \to Y \) be a tame smooth family of linear maps.
	Then, \( T \) is called \emphDef{uniformly tame regular} if there exist tame decompositions \( X = \ker T_0 \oplus \coimg T_0 \) and \( Y = \coker T_0 \oplus \img T_0 \), and, for every \( p \in P \), the map \( \tilde{T}_p = \pr_{\img T_0} \circ \restr{(T_p)}{\coimg T_0}: \coimg T_0 \to \img T_0 \) is a tame isomorphism such that the inverses form a tame smooth family \( P \times \img T_0 \to \coimg T_0 \).
	Then, the equivalence of \cref{prop:normalFormLinearMap:uniformRegularCharacterization} holds with \( Z^\pm \) being tame Fréchet spaces, \( T^\pm \) being tame maps and \( S, S^\pm, S^{-+}, \Gamma^{-1} \) being tame smooth families. 
\end{remark}

\subsection{Fredholm operators}
An important class of examples of regular operators is given by Fredholm operators.
Fredholm operators are usually studied as maps between Banach spaces (or Hilbert spaces), but most results extend to the locally convex setting, \cf \parencite{Schaefer1956, Schaefer1959, Edwards1965}.
\begin{defn}
	\label{def:fredholmOperators:fredholmOperator}
	A continuous linear map \( T: X \to Y \) between locally convex spaces is called a \emphDef{Fredholm operator} if \( T \) is relatively open, the kernel of \( T \) is a finite-dimensional subspace of \( X \), and the image of \( T \) is a finite-codimensional closed subspace of \( Y \).
	The \emphDef{index} \( \ind T \) of a Fredholm operator \( T \) is defined by
	\begin{equation}
		\ind T = \dim \ker T - \dim \coker T.
		\qedhere
	\end{equation}
\end{defn}
Since finite-dimensional subspaces and finite-codimensional closed subspaces of a locally convex space are always topologically complemented according to \parencite[Propositions~15.8.2 and~20.5.5]{Koethe1983}, every Fredholm operator is regular.

For a continuous family \( T \) of linear maps with \( T_0 \) being a Fredholm operator, the invertibility of the family \( \Gamma \) in \cref{prop:normalFormLinearMap:uniformRegularCharacterization} is automatic.
\begin{coro}
	\label{prop:normalFormLinearMap:uniformRegularCharacterization:fredholm}
	Let \( T: P \times X \to Y \) be a continuous family of linear maps between locally convex spaces \( X \) and \( Y \) such that \( T_0 \) is a Fredholm operator.
	Then, \( T \) is uniformly regular if and only if there exist finite-dimensional spaces \( Z^\pm \) and continuous linear maps \( T^+: Z^+ \to Y \) and \( T^-: X \to Z^- \) such that, after possibly shrinking \( P \), 
	\begin{equation}
		\Matrix{T_p & T^+ \\ T^- & 0}^{-1} = \Matrix{S_p & S^-_p \\ S^+_p & S^{-+}_p}
	\end{equation}
	holds for all \( p \in P \), where \( S: P \times Y \to X \), \( S^-: P \times Z^- \to X \), \( S^+: P \times Y \to Z^+ \) and \( S^{-+}: P \times Z^- \to Z^+ \) are continuous families of linear maps with \( S^{-+}_0 = 0 \).
\end{coro}
\begin{proof}
	If \( T \) is uniformly regular, then the proof of \cref{prop:normalFormLinearMap:uniformRegularCharacterization} shows that one can choose \( Z^+ = \coker T_0 \) and \( Z^- = \ker T_0 \).
	Both spaces are finite-dimensional, because \( T_0 \) is a Fredholm operator.
	This establishes one direction.

	Conversely, let \( T^\pm, S, S^\pm, S^{-+} \) be given as stated above.
	By \cref{prop:normalFormLinearMap:uniformRegularCharacterization}, it suffices to show that the operator
	\begin{equation}
		\Gamma_p = \Matrix{
			\pr_{\ker T_0} \circ \restr{(S_p)}{\coker T_0}
			&
			\pr_{\ker T_0} \circ S^-_p
			\\
			\restr{(S^+_p)}{\coker T_0}
			&
			S^{-+}_p
		}: \coker T_0 \oplus Z^- \to \ker T_0 \oplus Z^+
	\end{equation}
	is invertible and that the inverses form a continuous family.
	Since \( S^{-+}_0 = 0 \), \cref{prop:normalFormLinearMap:inverseBlockMatrix:projection} implies \( S^-_0 \circ T^- = \pr_{\ker T_0} \) and \( T^+ \circ S^+_0 = \pr_{\coker T_0} \).
	A straightforward calculation using these identities shows that we have
	\begin{equation}
		\Gamma_0 = \Matrix{
			0
			&
			\pr_{\ker T_0} \circ S^-_0
			\\
			\restr{(S^+_0)}{\coker T_0}
			&
			0
		},
	\end{equation}
	and
	\begin{equation}
		\Gamma_0^{\, -1} = \Matrix{
			0
			&
			\pr_{\coker T_0} \circ T^+
			\\
			\restr{(T^-)}{\ker T_0}
			&
			0
		}.
	\end{equation}
	Since, for every \( p \in P \), \( \Gamma_p \) is an operator between finite-dimensional spaces and \( \Gamma_0 \) is invertible, we can shrink \( P \) in such a way that \( \Gamma_p \) is invertible for all \( p \in P \) and that the inverses form a continuous family.
	Thus, \cref{prop:normalFormLinearMap:uniformRegularCharacterization} implies that \( T \) is uniformly regular.
\end{proof}
In the Banach setting, the set of Fredholm operators is open in the space of bounded linear operators and the index does not change under a continuous deformation~\parencite[Corollary~19.1.6]{Hoermander2007}.
This statement relies on the openness of the set of invertible operators and thus does not carry over to the locally convex setting.
The following proposition shows that the notion of uniform regularity is an adequate substitute.
\begin{prop}
	Let \( T: P \times X \to Y \) be a uniformly regular family of linear maps between locally convex spaces \( X \) and \( Y \).
	If \( T_0 \) is  Fredholm, then \( T_p \) is Fredholm and \( \ind T_p = \ind T_0 \) for all \( p \in P \).
\end{prop}
\begin{proof}
	By \cref{prop:normalFormLinearMap:uniformRegularCharacterization:fredholm}, there exist finite-dimensional spaces \( Z^\pm \) and continuous linear maps \( T^+: Z^+ \to Y \) and \( T^-: X \to Z^- \) such that 
	\begin{equation}
		\Matrix{T_p & T^+ \\ T^- & 0}
	\end{equation}
	is invertible for all \( p \in P \).
	This is only possible if \( \ker T_p \) and \( \coker T_p \) are finite-dimensional so that \( T_p \) has to be Fredholm.
	Moreover, using the invariance of the index under finite-rank perturbations, we have
	\begin{equation}
		0 
			= \ind \Matrix{ T_p & T^+ \\ T^- & 0} 
			= \ind T_p + \dim Z^+ - \dim Z^-
			= \ind T_p - \ind T_0,
	\end{equation}
	which establishes the formula for the index.
\end{proof}

As we discuss now, families of elliptic operators constitute an important class of examples of uniformly regular Fredholm operators.
Let \( E \to M \) and \( F \to M \) be finite-dimensional vector bundles over a compact manifold \( M \) without boundary.
Endow the spaces \( \SectionSpaceAbb{E} \) and \( \SectionSpaceAbb{F} \) of smooth sections of \( E \) and \( F \), respectively, with the compact-open \( \sFunctionSpace \)-topology.
With respect to this topology, these section spaces are tame Fréchet spaces, see \parencite[Theorem~II.2.3.1]{Hamilton1982}.
A continuous linear map \( L: \SectionSpaceAbb{E} \to \SectionSpaceAbb{F} \) is a partial differential operator of degree \( r \) if and only if there exists a vertical vector bundle morphism \( f: \JetBundle^r E \to F \) such that \( L \) factors through the jet bundle \( \JetBundle^r E \) as follows:
\begin{equationcd}[label=eq:normalFormLinearMap:familiesEllipticOps:factorizationDiffOp]
	L\tikzcolon \SectionSpaceAbb{E}
	\to[r, "\jet^r"]
	&\sSectionSpace(\JetBundle^r E)
	\to[r, "f_*"]
	&\SectionSpaceAbb{F},
\end{equationcd}
where \( \jet^r \) denotes the \( r \)-th jet prolongation and \(  f_* \) is the push-forward by \( f \).
We refer to \( f \) as the coefficients of \( L \) and will sometimes write \( L_f \) instead of \( L \) to emphasize this relation.
Recall that the principal symbol \( \principalSymbol_f \) of \( L_f \) is a homogeneous polynomial of degree \( r \) on \( \CotBundle M \) with values in the bundle \( \LinMapBundle(E, F) \) of fiberwise linear maps \( E \to F \).
A differential operator \( L_f \) with coefficients \( f \) is called \emphDef{elliptic} if its symbol is invertible; that is, for each nonzero \( p \in \CotBundle M \), the bundle map \( \principalSymbol_f (p, \dotsc, p) \in \LinMapBundle(E, F) \) is invertible.

It is a standard result in elliptic theory that every elliptic differential operator over a compact manifold is a Fredholm operator between appropriate Sobolev spaces \parencite[Theorem~19.2.1]{Hoermander2007}.
The same holds true in the tame Fréchet category.
In fact, more is true: elliptic operators are regularly parametrized by their coefficients. 
\begin{thm}
	\label{prop:normalFormLinearMap:familiesEllipticOps:areUniformlyTameRegular}
	Let \( E \to M \) and \( F \to M \) be finite-dimensional vector bundles over a compact manifold \( M \) without boundary, and denote the space of smooth sections of \( E \) and \( F \) by \( \SectionSpaceAbb{E} \) and \( \SectionSpaceAbb{F} \), respectively.
	The parametrization of a partial differential operator by their coefficients yields a tame smooth family
	\begin{equation}
		\label{eq:normalFormLinearMap:familiesEllipticOps:diffOpFamily}
		L: {\sSectionSpace\bigl(\LinMapBundle(\JetBundle^r E, F)\bigl)} \times \SectionSpaceAbb{E} \to \SectionSpaceAbb{F},
		\quad
		(f, \phi) \mapsto L_f (\phi)
	\end{equation}
	of linear operators which is uniformly tame regular in a neighborhood of every \( f_0 \in \sSectionSpace\bigl(\LinMapBundle(\JetBundle^r E, F)\bigl) \) for which \( L_{f_0} \) is an elliptic differential operator.
\end{thm}
\begin{proof}
	Let \( f_0 \in \sSectionSpace\bigl(\LinMapBundle(\JetBundle^r E, F)\bigl) \) be such that \( L_{f_0} \) is an elliptic differential operator.
	By \parencite[Theorem~II.3.3.3]{Hamilton1982}, there exist an open neighborhood \( \SectionSpaceAbb{U} \) of \( f_0 \) in \( \sSectionSpace\bigl(\LinMapBundle(\JetBundle^r E, F)\bigl) \), finite-dimensional vector spaces \( Z^\pm \) and continuous linear maps \( L^+: Z^+ \to Y \) and \( L^-: X \to Z^- \) such that 
	\begin{equation}
		\Matrix{L_f & L^+ \\ L^- & 0}: \SectionSpaceAbb{E} \times Z^+ \to \SectionSpaceAbb{F} \times Z^-  
	\end{equation}
	is invertible for all \( f \in \SectionSpaceAbb{U} \).
	Moreover, the inverses form a tame smooth family \( \SectionSpaceAbb{U} \times \SectionSpaceAbb{F} \times Z^- \to \SectionSpaceAbb{E} \times Z^+ \) of linear operators.
	Hence, by \cref{prop:normalFormLinearMap:uniformRegularCharacterization:fredholm}, \( \restr{L}{\SectionSpaceAbb{U} \times \SectionSpaceAbb{E} } \) is uniformly tame regular at \( f_0 \).
\end{proof}

\subsection{Elliptic complexes}
\label{sec:normalFormLinearMap:ellipticComplexes}
In this section, the notion of uniform regularity is extended to linear chain complexes.
The main application we have in mind is elliptic complexes.

Let \( P \) be a neighborhood of \( 0 \) in some locally convex space, let \( X_i \) be a sequence of locally convex spaces and let \( T_i: P \times X_i \to X_{i+1} \) be a sequence of continuous families of linear maps such that \( (X_i, T_{i, 0}) \) is a complex, \ie, \( T_{i+1, 0} \circ T_{i, 0} = 0 \) for all \( i \in \Z \).
We say that \( (P, X_i, T_{i}) \) is a \emphDef{continuous family of chains}.
Simple examples (\cf \cref{prop:normalFormLinearMap:familiesEllipticOps:covariantDerivUniformRegular} below) show that a deformation of a chain complex is in general not a complex; this is why we require \( T_{i, p} \) to form a complex only at \( p = 0 \).
The following notion is a natural generalization of uniform regularity to chains.
\begin{defn}
	\label{defn:normalFormLinearMap:ellipticComplexes:uniformRegular}
	A continuous family of chains \( (P, X_i, T_{i}) \) is called \emphDef{uniformly regular} (at \( 0 \)) if the following holds for every \( i \in \Z \):
	\begin{enumerate}
		\item
			The image of \( T_{i-1, 0} \) is closed in \( X_i \), and there exist closed subspaces \( H_i \) and \( \coimg T_{i, 0} \) of \( X_i \) such that
			\begin{equation}
				\label{eq:normalFormLinearMap:chainComplex:decomposition}
				X_i = \img T_{i-1, 0} \oplus \coimg T_{i, 0} \oplus H_i
			\end{equation}
			is a topological decomposition and \( H_i \subseteq \ker T_{i,0} \).
		\item
			For every \( p \in P \), the map 
			\begin{equation}
				\tilde{T}_{i, p} = \pr_{\img T_{i, 0}} \circ \restr{(T_{i, p})}{\coimg T_{i, 0}}: \coimg T_{i, 0} \to \img T_{i, 0}
			\end{equation}
			is a topological isomorphism such that the inverses form a continuous family \( P \times \img T_{i, 0} \to \coimg T_{i,0} \).
	\end{enumerate}
	If additionally, for every \( i \in \Z \), \( X_i \) is a tame Fréchet space, \( T_i \) is a tame smooth family, the decomposition~\eqref{eq:normalFormLinearMap:chainComplex:decomposition} of \( X_i \) is tame and \( \tilde{T}_{i, p} \) are a tame isomorphisms such that the inverses form a tame smooth family, then \( (P, X_i, T_i) \) is called \emphDef{uniformly tame regular}.
\end{defn}
By definition, for an uniformly regular family \( (P, X_i, T_{i}) \) of chains, we have 
\begin{equation}
	\ker T_{i, 0} = \img T_{i-1,0} \oplus H_i \, ,
\end{equation}
which justifies the notion \( \coimg T_{i,0} \) for the subspace in the decomposition~\eqref{eq:normalFormLinearMap:chainComplex:decomposition}.
The subspaces \( H_i \) are identified with the \emphDef{homology groups} for the complex at \( p = 0 \), that is,
\begin{equation}
	H_i \isomorph \ker T_{i, 0} \slash \img T_{i-1, 0}.
\end{equation}

For the applications we have in mind, the following characterization of uniform regularity of chains turns out to be more convenient.
It entails that, roughly speaking, a family of chains \( (P, X_i, T_{i}) \) is uniformly regular if each family \( T_i \) of linear maps is uniformly regular after factoring-out the image of the direct predecessor \( T_{i-1, 0} \).
\begin{prop}
	\label{prop:normalFormLinearMap:ellipticComplexes:uniformRegularEquiv}
 	A continuous family of chains \( (P, X_i, T_{i}) \) is uniformly regular if and only if, for every \( i \in \Z \), the subspace \( \img T_{i-1, 0} \) of \( X_i \) is closed and topologically complemented, say \( X_i = \img T_{i-1, 0} \oplus \coker T_{i-1, 0} \), and the continuous family \( p \mapsto \restr{(T_{i, p})}{\coker T_{i-1, 0}} \) of linear maps is uniformly regular.
\end{prop}
\begin{proof}
	The claim is a simple consequence of the observation that the image of \( \restr{(T_{i, 0})}{\coker T_{i-1, 0}} \) coincides with the image of \( T_{i, 0} \) and that 
	\begin{equation}
		H_i \isomorph \ker{\restr{(T_{i, 0})}{\coker T_{i-1, 0}}}
	\end{equation}
	holds, because \( T_{i,0} \) is a complex.
\end{proof}
Let us now turn to deformations of elliptic complexes.
Let \( E_0, E_1, \dotsc, E_N \) be a sequence of finite-dimensional vector bundles over a compact manifold \( M \), and let \( \SectionSpaceAbb{E}_i \) be the tame Fréchet space of smooth sections of \( E_i \). 
Moreover, let \( P \) be an open neighborhood of \( 0 \) in some tame Fréchet space, and let \( L_{i}: P \times \SectionSpaceAbb{E}_i \to \SectionSpaceAbb{E}_{i+1} \) be a sequence of differential operators parametrized by points of \( P \).
We assume that, for every \( i \in \Z \), the parametrization factors through the space of coefficients as follows:
\begin{equationcd}
	P \times \SectionSpaceAbb{E}_i
		\to[rr, "{\hat{L}_i} \, \times \, {\id_{\SectionSpaceAbb{E}_i}}"]
		&& {\sSectionSpace\bigl(\LinMapBundle(\JetBundle^{r_i} E_i, E_{i+1})\bigl)} \times {\SectionSpaceAbb{E}_i}
		\to[rr]
		&& \SectionSpaceAbb{E}_{i+1},
\end{equationcd}
where \( \hat{L}_i: P \mapsto \sSectionSpace\bigl(\LinMapBundle(\JetBundle^{r_i} E_i, E_{i+1})\bigl) \) is a tame smooth map and the second map is the parametrization of differential operators by their coefficients as defined in~\eqref{eq:normalFormLinearMap:familiesEllipticOps:diffOpFamily}.
For simplicity, let us assume that the degree \( r_i \) of the differential operator \( L_{i, p}: \SectionSpaceAbb{E}_i \to \SectionSpaceAbb{E}_{i+1} \) is the same for all \( p \in P \) and \( i \in \Z \).  
We will refer to this setting by saying that \( (P, \SectionSpaceAbb{E}_i, L_{i}) \) is a \emphDef{tame family of chains of differential operators}.
A chain complex \( L_i: \SectionSpaceAbb{E}_i \to \SectionSpaceAbb{E}_{i+1} \) of differential operators is called \emphDef{elliptic} if the sequence of principal symbols
\begin{equationcd}
	\dotsb \to[r] & \CotBundleProj^* E_{i} \to[r, "\principalSymbol(L_i)"] & \CotBundleProj^* E_{i+1} \to[r] & \dotsb
\end{equationcd}
is exact outside of the zero section of the cotangent bundle \( \CotBundleProj: \CotBundle M \to M \).

As a generalization of \cref{prop:normalFormLinearMap:familiesEllipticOps:areUniformlyTameRegular}, we have the following result concerning deformations of elliptic complexes.
\begin{thm}
	\label{prop:normalFormLinearMap:familiesEllipticOps:ellipticComplexUniformlyTameRegular}
	Let \( E_0, \dotsc, E_N \) be a sequence of finite-dimensional vector bundles over a compact manifold \( M \), and denote the space of smooth sections of \( E_i \) by \( \SectionSpaceAbb{E}_i \).
	Let \( (P, \SectionSpaceAbb{E}_i, L_{i}) \) be a tame family of chains of differential operators.
	If \( (\SectionSpaceAbb{E}_i, L_{i, 0}) \) is an elliptic complex, then \( (P, \SectionSpaceAbb{E}_i, L_{i}) \) is uniformly tame regular (after possibly shrinking \( P \)).
\end{thm}
\begin{proof}
	The proof is inspired by the proof of \parencite[Proposition~6.1]{AtiyahBott1967}, where a parametrix of an elliptic complex is constructed by using the parametrix of an elliptic operator.
	Similarly, we will reduce the question of the uniform tame regularity of the chain to the uniform regularity of a deformation of differential operators, for which we can employ \cref{prop:normalFormLinearMap:familiesEllipticOps:areUniformlyTameRegular}.

	For this purpose, fix a Riemannian metric on \( M \) and a fiber Riemannian metric on every vector bundle \( E_i \).
	These data define a natural \( \LTwoFunctionSpace \)-inner product on \( \SectionSpaceAbb{E}_i \).
	By partial integration, we see that the adjoints \( L^*_{i, p}: \SectionSpaceAbb{E}_{i+1} \to \SectionSpaceAbb{E}_i \) of \( L_{i,p} \) with respect to these inner products yield a tame family of chains of differential operators.
	For every \( i \in \Z \), define the tame family \( \laplace_{i}: P \times \SectionSpaceAbb{E}_i \to \SectionSpaceAbb{E}_i \) by
	\begin{equation}
		\label{eq:normalFormLinearMap:familiesEllipticOps:ellipticComplexUniformlyTameRegular:laplace}
		\laplace_{i, p} = L_{i,0}^* \circ L_{i,p} + L_{i-1,p} \circ L_{i-1,0}^*.
	\end{equation}
	Clearly, \( \laplace_{i} \) is a family of differential operators of order \( 2 r \).
	Moreover, \( \laplace_{i, 0} \) is an elliptic operator, because \((\SectionSpaceAbb{E}_i, L_{i, 0}) \) is an elliptic complex by assumption.
	Thus, \cref{prop:normalFormLinearMap:familiesEllipticOps:areUniformlyTameRegular} implies that the family \( \laplace_{i} \) is uniformly tame regular.
	In particular, \( \laplace_{i, 0} \) is regular and self-adjoint so that we get the following topological decomposition
	\begin{equation}
		\label{eq:normalFormLinearMap:familiesEllipticOps:ellipticComplexUniformlyTameRegular:decompositionLaplacian}
		\SectionSpaceAbb{E}_i = \ker \laplace_{i, 0} \oplus \img \laplace_{i, 0} \, .
	\end{equation}
	Moreover, \( \tilde{\laplace}_{i, p} = \pr_{\img \laplace_{i, 0}} \circ \restr{(\laplace_{i, p})}{\img \laplace_{i, 0}} \) is a tame automorphism of \( \img \laplace_{i, 0} \) for every \( p \in P \) (after possibly shrinking \( P \)) in such a way that the inverses form a tame smooth family.
	The decomposition~\eqref{eq:normalFormLinearMap:familiesEllipticOps:ellipticComplexUniformlyTameRegular:decompositionLaplacian} implies that the images of \( L_{i-1,0} \) and \( L^*_{i,0} \) are closed and that they fit into the topological decomposition
	\begin{equation}
		\label{eq:normalFormLinearMap:familiesEllipticOps:ellipticComplexUniformlyTameRegular:decomposition}
		\SectionSpaceAbb{E}_i = \img L_{i-1,0} \oplus \img L^*_{i,0} \oplus H_i \, ,
	\end{equation}
	where \( H_i \equiv \ker \laplace_{i, 0} = \ker L^{}_{i,0} \intersect \ker L^*_{i-1, 0} \).
	Finally, a direct calculation shows that, for every \( i \in \Z \), the tame smooth family \( G_i \) defined by
	\begin{equation}
		G_{i,p} = L^*_{i, 0} \circ \restr{(\tilde{\laplace}_{i+1,p}^{-1})}{\img L_{i, 0}}: \img L_{i, 0} \to \img L^*_{i, 0}
	\end{equation}
	is an inverse of the family
	\begin{equation}
		\tilde{L}_{i, p} = \pr_{\img L_{i, 0}} \circ \restr{(L_{i, p})}{\img L^*_{i, 0}}: \img L^*_{i, 0} \to \img L_{i, 0}.
	\end{equation}
	This shows that \( (P, \SectionSpaceAbb{E}_i, L_{i}) \) is uniformly tame regular.
\end{proof}

\begin{example}
	\label{prop:normalFormLinearMap:familiesEllipticOps:covariantDerivUniformRegular}
	Let \( P \to M \) be a finite-dimensional principal \( G \)-bundle over a compact manifold \( M \), and let \( E \) be an associated vector bundle.
	The space \( \ConnSpace(P) \) of connections on \( P \) is an affine tame Fréchet space.
	Every connection \( A \in \ConnSpace(P) \) yields via the covariant exterior differential on \( E \)-valued forms a chain
	\begin{equationcd}
		\dotsb \to[r] & \DiffFormSpace^i(M, E) \to[r, "\dif_A"] & \DiffFormSpace^{i+1}(M, E) \to[r] & \dotsb \,.
	\end{equationcd}
	As this chain is an elliptic complex if the connection is flat, \cref{prop:normalFormLinearMap:familiesEllipticOps:ellipticComplexUniformlyTameRegular} entails that the family of chains \( \bigl(\ConnSpace(P), \DiffFormSpace^k(M, E), \dif_A \bigr) \) is uniformly tame regular in a neighborhood of every flat connection \( A_0 \in \ConnSpace(P) \).
	Moreover, by \cref{prop:normalFormLinearMap:ellipticComplexes:uniformRegularEquiv}, the family
	\begin{equation}
		\ConnSpace(P) \times \DiffFormSpace^0(M, E) \to \DiffFormSpace^{1}(M, E),
		\qquad
		(A, \alpha) \mapsto \dif_A \alpha
	\end{equation}
	is uniformly regular at every \( A_0 \in \ConnSpace(P) \) (we do not need flatness of \( A_0 \) for this in \( 0 \)-degree).
	The operator defined in~\eqref{eq:normalFormLinearMap:familiesEllipticOps:ellipticComplexUniformlyTameRegular:laplace} takes here the following form
	\begin{equation}
		\laplace_{A_0 A}^{} = \dif^*_{A_0} \dif^{}_A  + \dif^*_A \dif^{}_{A_0}: \DiffFormSpace^{k}(M, E) \to \DiffFormSpace^{k}(M, E)
	\end{equation}
	and is a natural extension of the Faddeev--Popov operator to forms of higher degree, \cf \parencite[eq.~(8.4.8)]{RudolphSchmidt2014}.
	A similar operator played a central role in \parencite[p.~405]{DiezHuebschmann2017} for the study of the curvature map \( F: \ConnSpace(P) \to \DiffFormSpace^2(M, \AdBundle P) \) near a flat connection.
\end{example}

\section{Normal form of a nonlinear map}
\label{sec:normalFormMap}

In this section, we study the local behavior of a smooth map \( f: M \to N \) between (infinite-dimensional) manifolds.
We introduce the concept of a normal form and find suitable conditions that ensure that \( f \) can be brought into such a normal form. 

In the linear setting, we have seen that every regular operator factorizes through a linear isomorphism.
Similarly, every nonlinear map can be represented locally by a linear isomorphism up to some higher order error term.
In fact, for a given smooth map \( f: M \to N \) and \( m \in M \), consider its local representative \( \tilde{f} \defeq \rho \circ f \circ \kappa^{-1}: X \supseteq U \to Y \) with respect to charts\footnotemark{} \( \kappa: M \supseteq U' \to U \subseteq X \) at \( m \) and \( \rho: N \supseteq V' \to V \subseteq Y \) at \( f(m) \) modeling \( M \) on \( X \isomorph \TBundle_m M \) and \( N \) on \( Y \isomorph \TBundle_{f(m)} N \), respectively.
\footnotetext{Throughout this work, we follow the convention that a chart \( \kappa: M \supseteq U' \to U \subseteq X \) at a point \( m \in M \) satisfies \( \kappa(m) = 0 \). Moreover, \( U' \) and \( U \) are understood to be open neighborhoods of \( m \) in \(  M \) and of \( 0 \) in \(  X \), respectively.}
Suppose \( \tangent_0 \tilde{f}: X \to Y \) is a regular operator with decompositions \( X = \ker \tangent_0 \tilde{f} \oplus \coimg \tangent_0 \tilde{f} \), \( Y = \coker \tangent_0 \tilde{f} \oplus \img \tangent_0 \tilde{f} \) and core \( \hat{f}: \coimg \tangent_0 \tilde{f} \to \img \tangent_0 \tilde{f} \).
Then defining \( f_\singularPart \defeq \tilde{f} - \hat{f}: X \supseteq U \to Y \) we get the following local representation of \( f \):
\begin{equation}
	\rho \circ f \circ \kappa^{-1} = \hat{f} + f_\singularPart.
\end{equation}
Thus, the singular part \( f_\singularPart \) obstructs \( f \) from being locally represented by the isomorphism \( \hat{f} \).
The concrete form of \( f_\singularPart \) depends, of course, on the chosen charts.
The aim is to construct charts such that the singular part satisfies additional properties, which are formalized in the following definition.
\begin{defn}
	\label{defn:normalFormMap:normalFormAbstract}
	An \emphDef{abstract normal form} consists of a tuple \( (X, Y, \hat{f}, f_\singularPart) \), where
	\begin{enumerate}
		\item
			\( X \) and \( Y \) are locally convex spaces with topological decompositions\footnotemark{} \( X = \ker \oplus \coimg \) and \( Y = \coker \oplus \img \),
			\footnotetext{In these decompositions \( \ker \), \( \coimg \), \etc denote abstract spaces. Below, we will identify them with the kernel, coimage, \etc of the tangent map \( \tangent_m f \), respectively.}
		\item
			\( \hat{f}: \coimg \to \img \) is a linear topological isomorphism,
		\item
			\( f_\singularPart: X \supseteq U \to \coker \) is a smooth map defined on an open neighborhood \( U \) of \( 0 \) in \( X \) such that \( f_\singularPart(0, x_2) = 0 \) holds for all \( x_2 \in U \intersect \coimg \) and such that the derivative \( \tangent_{(0, 0)} f_\singularPart: X \to \coker \) of \( f_\singularPart \) at \( (0, 0) \) vanishes.
		\end{enumerate}
		Given an abstract normal form \( (X, Y, \hat{f}, f_\singularPart) \), set \( \normalForm{f} = \hat{f} + f_\singularPart: X \supseteq U \to Y \).

		A normal form \( (X, Y, \hat{f}, f_\singularPart) \) is called \emphDef{tame} if \( X, Y \) are tame Fréchet spaces with tame decompositions \( X = \ker \oplus \coimg \), \( Y = \coker \oplus \img \), \( \hat{f} \) is a tame isomorphism and \( f_\singularPart \) is a tame smooth map.
\end{defn}
The \( 0 \)-level set of \( \normalForm{f} \) is given by
\begin{equation}
	\label{eq:normalFormMap:zeroSetNF}
	\normalForm{f}^{-1}(0) = \set[big]{(x_1, 0) \in U \given f_\singularPart(x_1, 0) = 0}.
\end{equation}
Since \( \tangent_{(0, 0)} f_\singularPart = 0 \), the level set \( \normalForm{f}^{-1}(0) \) is in general not a smooth manifold.
Its singular structure is completely determined by \( f_\singularPart \).
For this reason, we refer to \( f_\singularPart \) as the \emphDef{singular part} of \( \normalForm{f} \).

\begin{defn}
	\label{defn:normalFormMap:bringIntoNormalForm}
	We say that a smooth map \( f: M \to N \) between manifolds can \emphDef{be brought into the normal form} \( (X, Y, \hat{f}, f_\singularPart) \) at the point \( m \in M \) if there exist charts \( \kappa: M \supseteq U' \to U \subseteq X \) at \( m \) and \( \rho: N \supseteq V' \to V \subseteq Y \) at \( f(m) \) such that \( f(U') \subseteq V' \), \( f_\singularPart \) is defined on \( U \) and 
	\begin{equation}
		\label{eq:normalFormMap:bringIntoNormalForm:commutative}
		\rho \circ \restr{f}{U'} \circ \kappa^{-1} = \normalForm{f}
	\end{equation}
	holds.
	For short, we say that \emphDef{\( f \) is locally equivalent to \( \normalForm{f} \)}.
\end{defn}
Assume that the smooth map \( f: M \to N \) can be brought into the normal form \( (X, Y, \hat{f}, f_\singularPart) \) at the point \( m \in M \) using diffeomorphisms \( \kappa: U' \to U \) and \( \rho: V' \to V \).
The isomorphisms \( \tangent_m \kappa: \TBundle_m M \to X \) and \( \tangent_{f(m)} \rho: \TBundle_{f(m)} N \to Y \) identify the abstract spaces \( X \) and \( Y \) with the tangent spaces of \( M \) and \( N \), respectively.
Under these identifications, the spaces \( \ker \) and \( \img \) in the decomposition of \( X \) and \( Y \) coincide with the kernel and the image of \( \tangent_m f \).

In certain cases, a normal form amounts to a linearization of the map under consideration.
We say that a smooth map \( f: M \to N \) is a \emphDef{submersion} at \( m \in M \) if it is equivalent to a linear projection in a neighborhood of \( m \).
Similarly, \( f \) is called an \emphDef{immersion} at \( m \) if it is equivalent to a linear injection in a neighborhood of \( m \).
More generally, \( f \) is a \emphDef{subimmersion} at \( m \) if it is equivalent to a linear map in a neighborhood of \( m \).
The following proposition connects these definitions to the perhaps more classical characterizations known from finite-dimensional geometry.
\begin{prop}
	\label{prop:normalFormMap:submersionImmersionConstantRank}
	Let \( f: M \to N \) be a smooth map.
	Assume that \( f \) can be brought into a normal form in a neighborhood \( U' \) of \( m \in M \).
	Then, the following holds:
	\begin{thmenumerate}
		\item (Submersion)
			\label{prop:normalFormMap:submersion}
			\( f \) is a submersion at \( m \) if and only if \( \tangent_m f \) is surjective.
		\item (Immersion)
			\label{prop:normalFormMap:immersion}
			\( f \) is an immersion at \( m \) if and only if \( \tangent_m f \) is injective.
		\item (Constant rank)
			\label{prop:normalFormMap:constantRank}
			 \( f \) is a subimmersion at \( m \) if \( \tangent_p f \) is a finite-rank operator\sideNoteMark{} satisfying \( \rank \tangent_p f = \rank \tangent_m f \) for all \( p \in U' \).
			\qedhere
			\sideNoteText{An operator \( T: X \to Y \) between locally convex spaces \( X \) and \( Y \) is said to be of \emphDef{finite-rank} if its image is finite-dimensional.
			In this case, the dimension of \( \img T \) is called the rank of \( T \) and it is denoted by \( \rank T \).}
	\end{thmenumerate}
\end{prop}
\Cref{prop:normalFormMap:constantRank} generalizes to maps whose derivatives have a constant but not necessarily finite-dimensional image, \cf \parencite[Theorem~2.5.15]{MarsdenRatiuEtAl2002} for a Banach version.
\begin{proof}
	As the claim is of local nature, it suffices to consider the case where \( f = \normalForm{f} = \hat{f} + f_\singularPart \) for an abstract normal form \( (X, Y, \hat{f}, f_\singularPart) \), and \( m = 0 \).
	If \( \tangent_{0} \normalForm{f} = \hat{f} \circ \pr_{\coimg} \) is surjective, then \( \coker \) is trivial and hence \( f_\singularPart = 0 \).
	Similarly, if \( \tangent_{0} \normalForm{f} \) is injective, then \( \coimg = X \) and thus \( f_\singularPart = 0 \), because \( f_\singularPart(0, x_2) = 0 \) for all \( x_2 \in U \intersect \coimg \) by assumption.
	Moreover, we have
	\begin{equation}
		\tangent_x \normalForm{f} \, (v_1, v_2) = \bigl(\tangent_x f_\singularPart (v_1, v_2), \hat{f} (v_2)\bigr) \in \coker \oplus \img = Y
	\end{equation}
	for all \( x \in U \), \( v_1 \in \ker \) and \( v_2 \in \coimg \).
	The constant rank condition translates to \( \rank \tangent_x \normalForm{f} = \rank \tangent_0 \normalForm{f} = \dim \img \), and thus implies that \( \tangent_{(x_1, x_2)} f_\singularPart (v_1, 0) = 0 \) for all \( (x_1, x_2) \in U \) and \( v_1 \in \ker \).
	Hence, \( f_\singularPart(x_1, x_2) \) does not depend on \( x_1 \) and so \( f_\singularPart(x_1, x_2) = f_\singularPart(0, x_2) = 0 \).

	The converse directions are clear.
\end{proof}
Using the local structure of submersions and immersions, it is straightforward to verify that the usual statements about the submanifold structure of the naturally induced subsets translate to the locally convex setting.
Indeed, according to \parencite[Theorem~C]{Gloeckner2015}, the level set \( f^{-1}(\mu) \) is a submanifold of \( M \) if \( f \) is a submersion at every \( m \in f^{-1}(\mu) \).
If instead \( f: M \to N \) is an immersion and a topological embedding, then \( f(M) \) is a submanifold of \( N \), see \parencite[Lemma~I.13]{Gloeckner2015}.

The upshot of \cref{prop:normalFormMap:submersionImmersionConstantRank} is that one obtains the submersion, regular value, immersion and constant rank theorem practically for free as soon as one knows that the map under study can be brought into a normal form.
\emph{A normal form theorem thus unifies these fundamental theorems under one umbrella.}
In other words, we avoid the renewed construction of special normal forms for each of these results separately, which would be the standard approach in textbooks about (finite-dimensional) differential geometry \parencite{MarsdenRatiuEtAl2002,Lang1999}.
The aim of the remainder of the section is to find suitable conditions on the derivative of a map \( f \) which ensure that \( f \) can be brought into a normal form.
\begin{remark}[Finite-dimensional reduction and Sard--Smale theorem]
	As we have seen in~\eqref{eq:normalFormMap:zeroSetNF}, the singularities of a level set \( f^{-1}(\mu) \) are, in terms of a local normal form, completely encoded in the map 
	\begin{equation}
		f_\singularPart(\cdot, 0): \ker \to \coker .
	\end{equation}
	If \( \tangent_m f \) is a Fredholm operator, then \( \ker \) and \( \coker \) are finite-dimensional spaces.
	Thus, in this case, the study of the singular structure of \( f^{-1}(\mu) \) is reduced to a question in finite dimensions.
	Exploiting this reduction to finite dimensions, we may mimic the usual proof of the Sard--Smale theorem \parencite{Smale1965} to obtain a generalization of this theorem to Fredholm maps between \emph{locally convex} manifolds.
	We leave the details to the reader.
\end{remark}

\subsection{Banach version}
The main idea of the proof of our general normal form theorem is to deform some initially chosen charts in such a way that the singular part of the local representative satisfies the conditions of \cref{defn:normalFormMap:normalFormAbstract}.
This deformation will be accomplished using the Inverse Function Theorem.
To somewhat reduce the functional analytic complexity, we first restrict attention to the Banach setting.
\begin{thm}[Normal form --- Banach]
	\label{prop:normalFormMap:banach}
	\label{prop:normalFormMap:finiteDim}
	A smooth map \( f: M \to N \) between Banach manifolds can be brought into a normal form at a point \( m \in M \) if and only if \( \tangent_m f: \TBundle_m M \to \TBundle_{f(m)} N \) is a regular operator.
	In particular, every smooth map between finite-dimensional manifolds can be brought into a normal form around every point.
\end{thm}
\begin{proof}
	If \( f \) can be brought into the normal form \( (X, Y, \hat{f}, f_\singularPart) \), then \( \tangent_m f \) coincides up to conjugation by topological isomorphisms with the differential at \( 0 \) of \( \normalForm{f} = \hat{f} + f_\singularPart: X \supseteq U \to Y \).
	Now \( \tangent_0 \normalForm{f} = \hat{f} \circ \pr_{\coimg} \) shows that \( \tangent_m f \) is regular.

	In the other direction, since the claim is of local nature, we can use charts \( \tilde{\kappa}: M \supseteq U' \to U \subseteq X \) at \( m \) and \( \tilde{\rho}: N \supseteq V' \to V \subseteq Y \) at \( f(m) \) to replace \( f \) by its local representative \( \tilde{f} \defeq \tilde{\rho} \circ f \circ \tilde{\kappa}^{-1}: X \supseteq U \to Y \), and \( \tangent_m f \) by \( T \defeq \tangent_0 \tilde{f}: X \to Y \).
	Since \( T \) is regular by assumption, there exist topological decompositions \( X = \ker T \oplus \coimg T \) and \( Y = \coker T \oplus \img T \).
	Moreover, the core \( \hat{T}: \coimg T \to \img T \) of \( T \) is a topological isomorphism.

	Define the smooth map \( \psi: X \supseteq U \to X \) by 
	\begin{equation}
		\label{eq:normalFormMap:banach:deformDomain}
	 	\psi(x_1, x_2) = \bigl(x_1, \hat{T}^{-1} \circ \pr_{\img T} \circ \tilde{f}(x_1, x_2)\bigr),
	\end{equation}
	with \( x_1 \in \ker T \) and \( x_2 \in \coimg T \).
	Note that \( \psi(0) = 0 \).
	Since \( \tangent_0 \psi = \id_X \), it follows from the Inverse Function \cref{prop:inverseFunctionTheorem:banach} that we can shrink \( U \) in such a way that \( \psi(U) \) is an open neighborhood of \( 0 \) in \( X \) and \( \psi: U \to \psi(U) \) is a diffeomorphism.
	By possibly shrinking \( V \), we may assume that \( V \intersect \img T \subseteq \hat{T} \circ \psi(U \intersect \coimg T) \).
	Define the smooth map \( \phi: Y \supseteq V \to Y \) by
	\begin{equation}
		\label{eq:normalFormMap:banach:deformTarget}
		\phi(y_1, y_2) = \bigl(y_1 + \pr_{\coker T} \circ \tilde{f} \circ \psi^{-1}(0, \hat{T}^{-1} y_2), y_2\bigl)
	\end{equation}
	with \( y_1 \in \coker T \) and \( y_2 \in \img T \).
	A direct calculation shows \( \phi(0) = 0 \) and \( \tangent_0 \phi = \id_Y \).
	Thus, the Inverse Function \cref{prop:inverseFunctionTheorem:banach} implies that we can shrink \( V \) so that \( \phi(V) \) is an open neighborhood of \( 0 \) in \( Y \) and \( \phi: V \to \phi(V) \) is a diffeomorphism.
	By possibly shrinking \( U \), we may assume \( \tilde{f}(U) \subseteq V \) and \( \tilde{f}(U) \subseteq \phi(V) \).

	Set \( \hat{f} \defeq \hat{T}: \coimg T \to \img T \) and define the smooth map \( f_\singularPart: X \supseteq \psi(U) \to \coker T \) by
	\begin{equation}
		f_\singularPart \circ \psi(x_1, x_2) = \pr_{\coker T} \left(\tilde{f}(x_1, x_2) - \tilde{f} \circ \psi^{-1}\bigl(0, \hat{T}^{-1} \circ \pr_{\img T} \circ \tilde{f}(x_1, x_2)\bigr)\right).
	\end{equation}
	A straightforward computation shows that the following diagram commutes:
	\begin{equationcd}
		X \supseteq U	\to[d, "\psi", swap] \to[r, "\tilde{f}"]
			& \phi(V) \subseteq Y
			\\
		X \supseteq \psi(U) \to[r, "\hat{f} + f_\singularPart", swap]	
			& V \subseteq Y \to[u, "\phi", swap].
	\end{equationcd}
	Thus, in the charts \( \kappa = \psi \circ \tilde{\kappa} \) and \( \rho = \phi^{-1} \circ \tilde{\rho} \), the map \( f \) coincides with \( \normalForm{f} = \hat{f} + f_\singularPart \).
	This proofs~\eqref{eq:normalFormMap:bringIntoNormalForm:commutative}.
	Finally, let us verify the asserted properties of the singular part \( f_\singularPart \).
	We clearly have \( \tangent_0 f_\singularPart = 0 \).
	Moreover, for all \( x_2 \in U \intersect \coimg T \), we get
	\begin{equation}\begin{split}
		f_\singularPart \circ \psi(0, x_2)
			&= \pr_{\coker T} \left(\tilde{f}(0, x_2) - \tilde{f} \circ \psi^{-1}(0, \hat{T}^{-1} \circ \pr_{\img T} \circ \tilde{f}(0, x_2))\right)
			\\
			&= \pr_{\coker T} \left(\tilde{f}(0, x_2) - \tilde{f} \circ \psi^{-1} \circ \psi (0, x_2) \right) 
			\\
			&= 0.
	\end{split}\end{equation}
	From \( \psi(0, x_2) \in \coimg T \) it follows that \( \tilde{f}(0, x'_2) = 0 \) holds for all \( x'_2 \in \psi(U) \intersect \coimg T \).
\end{proof}
The first part of the proof of \cref{prop:normalFormMap:banach} is inspired by the Lyapunov--Schmidt reduction procedure.
To establish the link, let us give a brief outline of this procedure, mostly ignoring the peculiarities of the infinite-dimensional setting, see, \eg, \parencite[Section~1.3]{Chang2005}.
Given Banach spaces \( X \) and \( Y \), and a smooth map \( f: X \supseteq U \to Y \) defined on an open neighborhood \( U \) of \( 0 \) in \( X \) with \( f(0) = 0 \), we are interested in solutions of the nonlinear equation
\begin{equation}
	\label{eq:normalFormMap:LyapunovSchmidtStart}
	f(x) = 0
\end{equation}
near the solution \( x = 0 \).
The Lyapunov--Schmidt scheme consists of the following steps:
\begin{enumerate}
	\item
		Split \( X \) and \( Y \) into direct sums \( X = \ker T \oplus \coimg T \) and \( Y = \coker T \oplus \img T \), where \( T = \tangent_0 f: X \to Y \) as above.
		The equation~\eqref{eq:normalFormMap:LyapunovSchmidtStart} is then equivalent to the system
		\begin{equation}\label{eq:normalFormMap:LyapunovSchmidtDecomposed}\begin{split}
			\pr_{\coker T} \circ f (x_1, x_2) &= 0,
			\\
			\pr_{\img T} \circ f (x_1, x_2) &= 0,
		\end{split}\end{equation}
		with \( x_1 \in \ker T \) and \( x_2 \in \coimg T \).
	\item
		The Implicit Function Theorem shows that, after possibly shrinking \( U \), the second equation in~\eqref{eq:normalFormMap:LyapunovSchmidtDecomposed} has a unique solution \( x_2 = x_2 (x_1) \in U \intersect \coimg T \) as a function of \( x_1 \in U \intersect \ker T \).
	\item
		Substituting this solution of the second equation into the first equation of~\eqref{eq:normalFormMap:LyapunovSchmidtDecomposed} yields the reduced equation
		\begin{equation}
			\label{eq:normalFormMap:LyapunovSchmidtReduced}
			\pr_{\coker T} \circ f (x_1, x_2 (x_1)) = 0 .
		\end{equation}
		for the unknown \( x_1 \in U \intersect \ker T \).
\end{enumerate}
In this way, the nonlinear equation~\eqref{eq:normalFormMap:LyapunovSchmidtStart} is reduced to the nonlinear equation~\eqref{eq:normalFormMap:LyapunovSchmidtReduced}, which often happens to be a set of finitely many equations for a finite number of unknowns.
When comparing this reduction scheme with our construction of the chart deformation \( \psi \) in the proof of \cref{prop:normalFormMap:banach}, the only conceptual difference is our usage of the Inverse Function Theorem in place of the Implicit Function Theorem employed in the Lyapunov--Schmidt procedure.
Both methods rely fundamentally on the fact that the map \( \pr_{\img T} \circ f: X \supseteq U \to \img T \) has a surjective derivative at \( 0 \).
In our language, the reduced equation~\eqref{eq:normalFormMap:LyapunovSchmidtReduced} takes the form
\begin{equation}
	f_\singularPart(x_1, 0) = 0
\end{equation}
for \( x_1 \in U \intersect \ker T \).

Similar ideas are also used in the study of deformations of geometric objects, see, \eg, \parencite{Kuranishi1965} concerning deformations of complex structures and \parencites[Section~II.2]{Donaldson1983}[Section~6]{Taubes1982} in the gauge theoretic setting; see also \parencite[Section~4.2.5]{DonaldsonKronheimer1997}.
In this context, the counterpart of the local diffeomorphism \( \psi \) is usually referred to as the Kuranishi map.
\begin{remarks}
	\item
		A weaker version of \cref{prop:normalFormMap:banach} can be found in \parencites[Theorem~5.1.8]{MargalefRoigDominguez1992}[Theorem~2.5.14]{MarsdenRatiuEtAl2002}.
		There, the chart on \( N \) is not modified and hence the additional property \( f_\singularPart(0, \cdot) = 0 \) of the singular part is not deduced.
		Note that this property was crucial in the proof of \cref{prop:normalFormMap:immersion} to show that a smooth map with injective differential is an immersion.
	\item
		By \cref{prop:normalFormMap:immersion}, we get a corresponding normal form theorem for immersions.
		Note that in our construction of the normal form the chart on the domain is always deformed, which is in contrast to the classical immersion theorem (\eg, \parencite[Theorem~2.5.12]{MarsdenRatiuEtAl2002}).
		Hence, the constructed submanifold charts differ from the usual ones.
\end{remarks}

\subsection{Banach target or domain}
In the following, we give generalizations of the Banach normal form theorem to different analytic settings.
Let us start with the following extension of \cref{prop:normalFormMap:banach} to more general domains.
\begin{thm}[Normal form --- Banach target]
	\label{prop:normalFormMap:banachTarget}
	Let \( f: M \to N \) be a smooth map, where \( M \) is a locally convex manifold and \( N \) is a Banach manifold.
	Then \( f \) can be brought into a normal form at the point \( m \in M \) if and only if the differential \( \tangent_m f: \TBundle_m M \to \TBundle_{f(m)} N \) is a regular operator.
	In particular, every smooth map \( f: M \to N \) with finite-dimensional target \( N \) can be brought into a normal form at every point. 
\end{thm}
\begin{proof}
	The proof of \cref{prop:normalFormMap:banach} carries over word by word except for the part where the Inverse Function Theorem has been used to show that the map \( \psi \) defined in~\eqref{eq:normalFormMap:banach:deformDomain} is a local diffeomorphism.
	The idea here is to use the Inverse Function \cref{prop:inverseFunctionTheorem:banachWithParameters} due to \textcite{Gloeckner2006a} instead.
	For this purpose, define the smooth map \( \bar{\psi}: X \supseteq U \to \coimg T \) by
	\begin{equation}
		\bar{\psi}(x_1, x_2) = \hat{T}^{-1} \circ \pr_{\img T} \circ f(x_1, x_2).
	\end{equation}
	The partial derivative of \( \bar{\psi} \) at \( 0 \) with respect to the second component is given by \( \tangent^2_0 \bar{\psi} = \id_{\coimg T} \).
	Considering \( x_1 \in \ker \) as a parameter, the Inverse Function \cref{prop:inverseFunctionTheorem:banachWithParameters} shows that the map
	\begin{equation}
		\psi\bigl(x_1, x_2\bigr) = \bigl(x_1, \bar{\psi}(x_1, x_2)\bigr) 	
	\end{equation}
	is a local diffeomorphism.
	Note that \( \psi \) coincides with the map defined in~\eqref{eq:normalFormMap:banach:deformDomain}, so that the rest of the proof of \cref{prop:normalFormMap:banach} goes through without modification.
	The second part of the claim follows from \cref{prop::normalFormLinearMap:generalizedInverseWhenDomainFiniteDim}, which shows that every linear continuous map with finite-dimensional target is a regular operator.
\end{proof}
In combination with \cref{prop:normalFormMap:submersionImmersionConstantRank}, we recover the submersion theorem and the constant rank theorem \parencite[Theorem~A and~F]{Gloeckner2015} for maps with values in a finite-dimensional manifold as a special case of \cref{prop:normalFormMap:banachTarget}.

We also have the following version of the normal form theorem where the domain is a Banach manifold.
\begin{thm}[Normal form --- Banach domain]
	\label{prop:normalFormMap:banachDomain}
	Let \( f: M \to N \) be a smooth map between manifolds, where \( M \) is a Banach manifold and \( N \) is a locally convex manifold.
	Then \( f \) can be brought into a normal form at the point \( m \in M \) if and only if the differential \( \tangent_m f: \TBundle_m M \to \TBundle_{f(m)} N \) is a regular operator.
	In particular, every smooth map \( f: M \to N \) with \( M \) being finite-dimensional can be brought into a normal form around every point. 
\end{thm}
\begin{proof}
	The proof proceeds similarly to \cref{prop:normalFormMap:banach} with the modification that Glöckner's Inverse Function \cref{prop:inverseFunctionTheorem:banachWithParameters} is used to show that \( \phi \) defined in~\eqref{eq:normalFormMap:banach:deformTarget} is a local diffeomorphism
	 (using a similar strategy as in the proof of \cref{prop:normalFormMap:banachTarget}).
	Details are left to the reader.
\end{proof}
In conjunction with \cref{prop:normalFormMap:immersion}, we recover the immersion theorem \parencite[Theorem~H]{Gloeckner2015} for maps from a Banach manifold into a locally convex manifold.
\subsection{Nash--Moser version}
We now establish a normal form theorem in the tame Fréchet category using the Nash--Moser Inverse Function Theorem.
For the convenience of the reader, the basic concepts are briefly summarized in \cref{sec:tameFrechet}.

In contrast to the Banach Inverse Function Theorem, the Nash--Moser Theorem requires the derivative to be invertible in a \emph{whole neighborhood} of a given point.
This additional condition is due to the fact that the subset of invertible operators is no longer open in the space of all operators.
In \cref{sec:familyGeneralizedInverse}, we have introduced the notion of uniform regularity to address similar problems.
Uniform regularity plays a major role in the context of normal forms, too.
In fact, the following result has been the main inspiration for this concept.
\begin{prop}
	\label{prop:normalFormMap:normalFormThenDerivativeUniformRegular}
	For every normal form \( (X, Y, \hat{f}, f_\singularPart) \), the family \( \tangent_x \normalForm{f}: X \to Y \) of linear maps parametrized by \( x \in U \subseteq X \) is uniformly regular at \( 0 \).
\end{prop}
\begin{proof}
	With respect to the decompositions \( X = \ker \oplus \coimg \) and \( Y = \coker \oplus \img \), the derivative of \( \normalForm{f} \) at \( x \in U \) is given in block form as
	\begin{equation}
		\label{eq:normalFormMap:derivativeOfNormalForm}
		\tangent_x \normalForm{f} = 
			\Matrix{
				\restr{(\tangent_x f_\singularPart)}{\ker}
				&
				\restr{(\tangent_x f_\singularPart)}{\coimg}
				\\
				0
				&
				\hat{f}
			}.
	\end{equation}
	In particular, \( \tangent_0 \normalForm{f} = \hat{f} \circ \pr_{\coimg} \), because \( \tangent_0 f_\singularPart = 0 \).
	This shows that the abstract spaces \( \coimg \subseteq X \) and \( \coker \subseteq Y \) are topological complements of the kernel and image of \( \tangent_0 \normalForm{f} \), respectively.
	Moreover, the map
	\begin{equation}
		\pr_{\img} \circ \restr{(\tangent_x \normalForm{f})}{\coimg}: \coimg \to \img
	\end{equation}
	coincides with the isomorphism \( \hat{f} \), which confirms that \( \tangent_x \normalForm{f} \) is uniformly regular at \( 0 \).
\end{proof}

In order to extend the notion of uniform regularity to the setting of manifolds, consider a morphism \( T: E \to F \) of vector bundles over a manifold \( M \).
Suppose that \( E \) and \( F \) are trivialized over an open subset \( U \subseteq M \), that is, we are given vector bundle isomorphism \( \restr{E}{U} \to U \times \FibreBundleModel{E} \) and \( \restr{F}{U} \to U \times \FibreBundleModel{F} \), where \( \FibreBundleModel{E} \) and \( \FibreBundleModel{F} \) are locally convex spaces.
With respect to these trivializations, we identify \( T \) with a family \( \restr{T}{U}: U \times \FibreBundleModel{E} \to \FibreBundleModel{F} \) of linear maps.
\begin{defn}
	\label{defn::normalFormMap:uniformRegularBundle}
	A morphism \( T: E \to F \) of vector bundles over \( M \) is called \emphDef{uniformly regular} at \( m \in M \) if there exist local trivializations of \( E \) and \( F \) on an open neighborhood \( U \) of \( m \) such that the induced family \(  \restr{T}{U}: U \times \FibreBundleModel{E} \to \FibreBundleModel{F} \) of linear maps is uniformly regular at \( m \) in the sense of \cref{defn:familyGeneralizedInverse:uniformRegular}.
	Similarly, a tame morphism \( T: E \to F \) between tame Fréchet vector bundles is called \emphDef{uniformly tame regular} at \( m \), if there exist tame local trivializations such that the induced family \(  \restr{T}{U}: U \times \FibreBundleModel{E} \to \FibreBundleModel{F} \) is uniformly tame regular at \( m \).
\end{defn}

Phrased in this language, \cref{prop:normalFormMap:normalFormThenDerivativeUniformRegular} entails that the derivative of \( f: M \to N \), viewed as a vector bundle map \( \tangent f: \TBundle M \to f^* \TBundle N \) over \( M \), is uniformly regular at \( m \in M \) if \( f \) can be brought into a normal form at \( m \).
The following theorem shows that, in the tame category, uniform regularity of the derivative is also a sufficient condition for the existence of a normal form.
\begin{thm}[Normal form --- Tame Fréchet]
	\label{prop::normalFormMap:tameFrechet}
	A tame smooth map \( f: M \to N \) between tame Fréchet manifolds can be brought into a tame normal form at the point \( m \in M \) if and only if \( \tangent f: \TBundle M \to f^* \TBundle N \) is uniformly tame regular at \( m \).
\end{thm}
\begin{proof}
	If \( f \) can be brought into a tame normal form, then \cref{prop:normalFormMap:normalFormThenDerivativeUniformRegular} (phrased in the tame category) shows that \( \tangent f: \TBundle M \to f^* \TBundle N \) is uniformly tame regular at \( m \).
	
	The proof of the converse direction follows the same line of arguments as the proof of \cref{prop:normalFormMap:banach} except that we will use the Nash--Moser \cref{prop:inverseFunctionTheorem:nashMoser} to show that the chart deformations~\eqref{eq:normalFormMap:banach:deformDomain} and~\eqref{eq:normalFormMap:banach:deformTarget} are local diffeomorphisms.
	Continuing in the notation of the proof of \cref{prop:normalFormMap:banach}, abbreviate \( T_x \equiv \tangent_x f: X \to Y \) for every \( x \in U \subseteq X \).
	The assumption of uniform tame regularity of \( \tangent f \) implies that the family \( T: U \times X \to Y \) is uniformly tame regular at \( 0 \).

	The derivative at \( x \in U \) of the map \( \psi \) defined in~\eqref{eq:normalFormMap:banach:deformDomain} evaluates to
	\begin{equation}
		\tangent_{x} \psi 
			= 
			\begin{pmatrix}
				\id_{\ker T_0}	& 0 \\
				\hat{T}_0^{-1} \circ \pr_{\img T_0} \circ \restr{(T_{x})}{\ker T_0}	& \hat{T}_0^{-1} \circ \tilde{T}^{}_x
			\end{pmatrix},
	\end{equation}
	where \( \tilde{T}_x = \pr_{\img T_0} \circ \restr{(T_x)}{\coimg T_0}: \coimg T_0 \to \img T_0 \).
	Since \( T \) is uniformly tame regular, \( \tilde{T}_x \) is invertible for all \( x \in U \) and the inverses form a tame smooth family.
	Hence, \( \tangent_x \psi \) has a tame smooth family of inverses given by
	\begin{equation}
		\label{eq:normalFormMap:tameFrechet:inverseDeformDomain}
		\left(\tangent_{x} \psi\right)^{-1} 
			= 
			\begin{pmatrix}
				\id_{\ker T_0}	& 0 \\
				- \tilde{T}_x^{-1} \circ \pr_{\img T_0} \circ \restr{(T_{x})}{\ker T_0}	& \tilde{T}_x^{-1} \circ \hat{T}^{}_0 
			\end{pmatrix}.
	\end{equation}
	Thus, we can apply the Nash--Moser \cref{prop:inverseFunctionTheorem:nashMoser} to conclude that \( \psi \) is a local diffeomorphism at \( 0 \).

	Using~\eqref{eq:normalFormMap:tameFrechet:inverseDeformDomain}, the derivative of the map \( \phi \) defined in~\eqref{eq:normalFormMap:banach:deformTarget} can be written in block form as
	\begin{equation}
		\tangent_y \phi
			= 
			\begin{pmatrix}
				\id_{\coker T_0} 	& \pr_{\coker T_0} \circ T^{}_x \circ \tilde{T}_x^{-1} \\
				0			& \id_{\img T_0}
			\end{pmatrix},
	\end{equation}
	where \( y \in V \subseteq Y \) and \( x = \psi^{-1}\bigl(0, \hat{T}^{-1} \circ \pr_{\img T_0}(y)\bigr) \).
	A direct calculation verifies that \( \tangent_y \phi \) is invertible with inverse given by
	\begin{equation}
		\left(\tangent_y \phi\right)^{-1} = 
			\begin{pmatrix}
				\id_{\coker T_0} 	& - \pr_{\coker T_0} \circ T^{}_x \circ \tilde{T}_x^{-1} \\
				0			& \id_{\img T_0}
			\end{pmatrix}.
	\end{equation}
	Since the inverses \( \left(\tangent_y \phi\right)^{-1} \) parametrized by \( y \in V \) form a tame smooth family, the Nash--Moser \cref{prop:inverseFunctionTheorem:nashMoser} implies that \( \phi \) is a local diffeomorphism.
	The remainder of the proof of \cref{prop:normalFormMap:banach} goes through without modification.
\end{proof}

\subsection{Elliptic version}
\label{sec:normalFormMap:elliptic}

In applications to geometry and physics, one is usually interested in differential operators between spaces of geometric objects.
Let \( E \to M \) and \( F \to M \) be finite-dimensional fiber bundles over the compact manifold \( M \).
Denote the space of smooth sections of \( E \) and \( F \) by \( \SectionSpaceAbb{E} \) and \( \SectionSpaceAbb{F} \), respectively.
By \parencite[Theorem~II.2.3.1]{Hamilton1982}, the spaces \( \SectionSpaceAbb{E} \) and \( \SectionSpaceAbb{F} \) are tame Fréchet manifolds.
Following \parencite[Definition~15.3]{Palais1968}, a \emphDef{nonlinear differential operator} of degree \( r \) is a map \( \SectionSpaceAbb{E} \to \SectionSpaceAbb{F} \) factorizing through the \( r \)-jet bundle \( \JetBundle^r E \).
That is, for every vertical morphism \( f: \JetBundle^r E \to F \) of fiber bundles the associated nonlinear differential operator is the composition
\begin{equationcd}
	L\tikzcolon \SectionSpaceAbb{E}
	\to[r, "\jet^r"]
	&\sSectionSpace(\JetBundle^r E)
	\to[r, "f_*"]
	&\SectionSpaceAbb{F},
\end{equationcd}
where \( \jet^r \) denotes the \( r \)-th jet prolongation.
This is similar to the factorization~\eqref{eq:normalFormLinearMap:familiesEllipticOps:factorizationDiffOp} in the linear case, with the important difference that \( f \) is now a nonlinear fiber bundle morphism.
Every nonlinear differential operator \( L_f: \SectionSpaceAbb{E} \to \SectionSpaceAbb{F} \) is a tame smooth map according to \parencite[Corollary~II.2.2.7]{Hamilton1982}.

As one would expect, the linearization of a nonlinear differential operator is a linear differential operator.
In fact, the differential \( \tangent_\phi L_f \) is given by
\begin{equationcd}
	\tangent_\phi L_f \tikzcolon \sSectionSpace(\phi^* \VBundle E)
		\to[r, "\jet^r"]
		&\sSectionSpace(\JetBundle^r (\phi^* \VBundle E))
		\to[d, "\isomorph"]
		&
	\\
		&\sSectionSpace((\jet^r \phi)^* \VBundle (\JetBundle^r E))
		\to[r, "(\vtangent f)_*"]
		&\sSectionSpace((L_f (\phi))^* \VBundle F),
\end{equationcd}
where \( \VBundle E \) denotes the vertical subbundle of \( \TBundle E \) and \( \vtangent f: \VBundle (\JetBundle^r E) \to f^* \VBundle F \) is the vertical derivative of \( f \).
The isomorphism at the center is induced by the natural isomorphism of \( (\jet^r \phi)^* \VBundle (\JetBundle^r E) \) and \( \JetBundle^r (\phi^* \VBundle E) \), see \parencite[Theorem~17.1]{Palais1968}.
Hence, \( \tangent_\phi L_f \) is a linear differential operator with coefficients \( \vtangent f \).

We say that \( L_f \) is \emphDef{elliptic} if the linear differential operator \( \tangent_\phi L_f \) is elliptic for all \( \phi \in \SectionSpaceAbb{E} \).
\begin{thm}[Normal form --- Elliptic]
	\label{prop::normalFormMap:elliptic}
	Let \( E \to M \) and \( F \to M \) be finite-dimensional fiber bundles over the compact manifold \( M \).
	Every nonlinear elliptic differential operator \( L_f: \SectionSpaceAbb{E} \to \SectionSpaceAbb{F} \) can be brought into a tame normal form at every \( \phi \in \SectionSpaceAbb{E} \).
\end{thm}
\begin{proof}
	According to \cref{prop::normalFormMap:tameFrechet}, we have to show that the bundle map
	\begin{equation}
		\tangent L_f: \TBundle \SectionSpaceAbb{E} \to (L_f)^* \TBundle \SectionSpaceAbb{F}
	\end{equation}
	is uniformly tame regular at \( \phi \in \SectionSpaceAbb{E} \).
	After having chosen tubular neighborhoods of \( \img \phi \subseteq E \) and \( \img L_f(\phi) \subseteq F \), it suffices to consider the case where \( E \) and \( F \) are vector bundles.
	In this linear setting, the vertical tangent bundle of \( \JetBundle^r E \) is identified with \( \VBundle(\JetBundle^r E) \isomorph \JetBundle^r E \times_M \JetBundle^r E \).
	Accordingly, the vertical derivative of \( f \) is a map \( \vtangent f: \JetBundle^r E \times_M \JetBundle^r E \to F \).
	Define the tame smooth family \( T: \SectionSpaceAbb{E} \times \SectionSpaceAbb{E} \to \SectionSpaceAbb{F} \) of linear maps by
	\begin{equation}
		T(\varphi, \sigma) \equiv T_\varphi (\sigma) = \vtangent f (\jet^r \varphi, \jet^r \sigma).
	\end{equation}
	We have to show that the family \( \varphi \mapsto T_\varphi \) is uniformly tame regular at \( \varphi = \phi \).
	For this purpose, note that \( T \) factorizes as the composition of the tame smooth map 
	\begin{equation}
		\SectionSpaceAbb{E} \times \SectionSpaceAbb{E} \ni (\varphi, \sigma) \mapsto \bigl((\vtangent f)_* (\jet^r \varphi, \cdot), \sigma) \in \sSectionSpace(\LinMapBundle(\JetBundle^r E, F)) \times \SectionSpaceAbb{E}
	\end{equation}
	and the family of differential operators
	\begin{equation}
		D: \sSectionSpace\bigl(\LinMapBundle(\JetBundle^r E, F)\bigr) \times \SectionSpaceAbb{E} \to \SectionSpaceAbb{F},
		\qquad
		(\Lambda, \sigma) \mapsto \Lambda (\jet^r \sigma).
	\end{equation}
	By assumption, the differential operator with coefficients \( \Lambda_\phi = (\vtangent f)_* (\jet^r \phi, \cdot) \) is elliptic.
	Thus, \cref{prop:normalFormLinearMap:familiesEllipticOps:areUniformlyTameRegular} implies that \( D \) is uniformly tame regular at \( \Lambda_\phi \), and hence that the family \( T \) is uniformly tame regular at \( \varphi = \phi \).
\end{proof}

In applications, one often encounters infinite-dimensional manifolds which are not quite spaces of sections of a fiber bundle.
A prime example is the Fréchet Lie group of diffeomorphisms of a compact manifold.
Note that the diffeomorphism group is locally modeled on the space of vector fields.
Thus, from the perspective of local normal forms, it has a very similar structure to the space of sections.
In order to extend the normal form theorem to such situations, we follow \parencite[p.~57]{Subramaniam1984} and introduce the following subclass of Fréchet manifolds.
\begin{defn}
	\label{defn:normalFormMap:geometric}
	A tame Fréchet manifold \( M \) is said to be \emphDef{geometric} if it is locally modeled on the space of smooth sections of some vector bundle over a compact manifold.
	
	A tame smooth map \( f: M \to N \) is called \emphDef{geometric} if for every point \( m \in M \) there exist vector bundles \( E \) and \( F \) over the same compact manifold, and tame local trivializations \( \restr{(\TBundle M)}{U} \isomorph U \times \sSectionSpace(E) \) and \( \restr{(f^* \TBundle N)}{U} \isomorph U \times \sSectionSpace(F) \) in a neighborhood \( U \subseteq M \) of \( m \) such that in this trivialization the derivative \( \tangent f: \TBundle M \to f^* \TBundle N \) factorizes as the composition of a tame smooth map \( U \to \sSectionSpace(\LinMapBundle(\JetBundle^r E, F)) \) and the family of differential operators
	\begin{equation}
		\label{eq:normalFormMap:geometric:diffOps}
		D: \sSectionSpace(\LinMapBundle(\JetBundle^r E, F)) \times \sSectionSpace(E) \to \sSectionSpace(F),
		\qquad
		(\Lambda, \sigma) \mapsto \Lambda (\jet^r \sigma).
	\end{equation}
	The notions of a geometric vector bundle and a geometric vector bundle morphism are defined in a similar way.
\end{defn}
Roughly speaking, a map \( f: M \to N \) is geometric if its linearization \( \tangent_m f \) is a family of linear differential operators whose coefficients depend tamely on \( m \in M \).
As we have seen in the proof of \cref{prop::normalFormMap:elliptic}, every nonlinear differential operator is a geometric map.
Moreover, a slight reformulation of the proof of \cref{prop::normalFormMap:elliptic} gives the following more general normal form result.
\begin{thm}[Normal form --- Elliptic]
	\label{prop::normalFormMap:ellipticGeometric}
	Let \( f: M \to N \) be a geometric map between geometric Fréchet manifolds.
	If \( \tangent_m f: \TBundle_m M \to \TBundle_{f(m)} N \) is an elliptic differential operator for some \( m \in M \), then \( f \) can be brought into a tame normal form at \( m \).
\end{thm}
\begin{proof}
	By \cref{prop::normalFormMap:tameFrechet}, we have to show that the bundle map \( \tangent f: \TBundle M \to f^* \TBundle N \) is uniformly tame regular at \( m \).
	Since \( f \) is geometric, there exist tame local trivializations \( \restr{(\TBundle M)}{U} \isomorph U \times \sSectionSpace(E) \) and \( \restr{(f^* \TBundle N)}{U} \isomorph U \times \sSectionSpace(F) \) in a neighborhood \( U \subseteq M \) of \( m \) such that \( \tangent f \) factorizes as the composition of a tame smooth map \( S: U \to \sSectionSpace(\LinMapBundle(\JetBundle^r E, F)) \) and the family of differential operators \( D: \sSectionSpace(\LinMapBundle(\JetBundle^r E, F)) \times \sSectionSpace(E) \to \sSectionSpace(F) \) defined in~\eqref{eq:normalFormMap:geometric:diffOps}.
	By assumption, the differential operator \( D_{S(m)} \) with symbol \( S(m) \) is elliptic.
	Hence, \cref{prop:normalFormLinearMap:familiesEllipticOps:areUniformlyTameRegular} implies that the family \( \tangent f \) is uniformly tame regular at \( m \).
\end{proof}

\subsection{Relative normal form}
We now briefly discuss how to bring a smooth map \( f: M \to N \) into a normal form relative to a given submanifold \( P \) of \( N \). 
This is a direct generalization of the standard transversality theory.
Recall that a submanifold \( P \subseteq N \) is called \emphDef{split} if \( \TBundle_p P \) is topologically complemented in \( \TBundle_p N \) for every point \( p \in P \).
Let \( \pr_p: \TBundle_p N \to \TBundle_p N \slash \TBundle_p P \) denote the canonical projection.
For the special case where \( P \) is a single point, the following relative normal form result recovers \cref{prop:normalFormMap:banach}. 
\begin{thm}[Relative normal form --- Banach]
	\label{prop:normalFormMap:banachRelative}
	Let \( f: M \to N \) be a smooth map between Banach manifolds, and let \( P \) be a split submanifold of \( N \).
	If the operator \( \pr_{f(m)} \circ \tangent_m f: \TBundle_m M \to \TBundle_{f(m)} N \slash \TBundle_{f(m)} P \) is regular at the point \( m \in M \), then there exist charts \( \kappa: M \supseteq U' \to U \subseteq X \) at \( m \) and \( \rho: N \supseteq V' \to V \subseteq Y \) at \( f(m) \), and decompositions \( X = \ker \oplus \coimg \), \( Y = \coker \oplus \img \oplus Z \) such that
	\begin{equation}
		\label{eq:normalFormMap:banachRelative:normalForm}
		\rho \circ f \circ \kappa^{-1} = \hat{f} + f_\singularPart + f_P,
	\end{equation}
	where \( \hat{f}: \coimg \to \img \) is a linear topological isomorphism, \( f_\singularPart: X \supseteq U \to \coker \) is a smooth map satisfying \( f_\singularPart(0, x_2) = 0 \) for all \( x_2 \in U \intersect \coimg \) and \( \tangent_0 f_\singularPart = 0 \), and \( f_P: X \supseteq U \to Z \) is a smooth map.
	The chart \( \rho \) can be chosen to be a submanifold chart for \( P \), \ie, \( \rho(P \intersect V') = Z \intersect V \).
\end{thm}
\begin{proof}
	Since \( P \) is a split submanifold of \( N \), there exist a chart \( \tilde{\rho}: N \supseteq V' \to V \subseteq Y \) at \( f(m) \), and a topological decomposition \( Y = \bar{Y} \oplus Z \) such that \( \tilde{\rho}(P \intersect V') = Z \intersect V \).
	Define \( \bar{f} \defeq \pr_{\bar{Y}} \circ \tilde{\rho} \circ \restr{f}{U'}: M \supseteq U' \to \bar{Y} \) and \( f_P \defeq \pr_Z \circ \tilde{\rho} \circ \restr{f}{U'} : M \supseteq U' \to Z \) where \( U' \) is a sufficiently small neighborhood of \( m \) so that \( f(U') \subseteq V' \).
	Since \( \pr_{f(m)} \circ \tangent_m f \) coincides with \( \tangent_m \bar{f} \) under the isomorphism \( \TBundle_{f(m)} N \slash \TBundle_{f(m)} P \isomorph \bar{Y} \), the operator \( \tangent_m \bar{f} \) is regular.
	The claim now follows from \cref{prop:normalFormMap:banach} applied to \( \bar{f} \).
\end{proof}
Since \( \rho \) is a submanifold chart for \( P \) and \( f_P \) takes values in \( Z \), the identity~\eqref{eq:normalFormMap:banachRelative:normalForm} implies
\begin{equation}
	\kappa (f^{-1}(P)) = \set[big]{(x_1, 0) \in U \given f_\singularPart(x_1, 0) = 0}.
\end{equation}
Note that \( \ker \isomorph (\tangent_m f)^{-1}\bigl(\TBundle_{f(m)} P\bigr) \).
Moreover, \( \coker \) is isomorphic to the quotient of \( \TBundle_{f(m)} N \) by \( \img \tangent_m f + \TBundle_{f(m)} P \).
We hence recover the classical result that \( f^{-1}(P) \) is a submanifold of \( M \) if \( f \) is transversal to \( P \). 

The strategy in the proof of \cref{prop:normalFormMap:banachRelative} generalizes to the other analytic settings considered above, leading to relative versions of \cref{prop:normalFormMap:banachTarget,prop:normalFormMap:banachDomain,prop::normalFormMap:tameFrechet,prop::normalFormMap:elliptic,prop::normalFormMap:ellipticGeometric}.
We leave the details to the reader.

\section{Normal form of an equivariant map}
\label{sec:normalFormEquivariantMap}

In this section, we study the local properties of a smooth equivariant map.
We refer the reader to \cref{sec:calculus:groupActionsSlices} for relevant background material on Lie group actions in infinite dimensions.
Consider the following setup.
Let \( G \) be a Lie group, and let \( M \) and \( N \) be \( G \)-manifolds.
Assume that the \( G \)-action on \( M \) is proper.
Let \( f: M \to N \) be a smooth equivariant map.
Choose a point \( m \in M \) and denote its image under \( f \) by \( \mu = f(m) \in N \).

Let us start by describing the general idea of how to construct an equivariant normal form of \( f \).
Since a slice provides a normal form for the \( G \)-action in a neighborhood of a given orbit, it naturally comes to mind as a tool for studying equivariant maps.
An initial idea would be to split-off the \( G \)-action with the help of slices \( S \) and \( R \) for the \( G \)-action at the points \( m \in M \) and \( \mu \in N \), respectively.
If the slices satisfy \( f(S) \subseteq R \), then the part of \( f \) that does not come from the group action is captured in the map \( f_S^R = \restr{f}{S}: S \to R \) between the slices (see \cref{fig:normalFormEquivariantMap:twoSlices}).
For the reduced map \( f_S^R \), we can use the normal form results of \cref{sec:normalFormMap} to arrive at an equivariant normal form for \( f \).
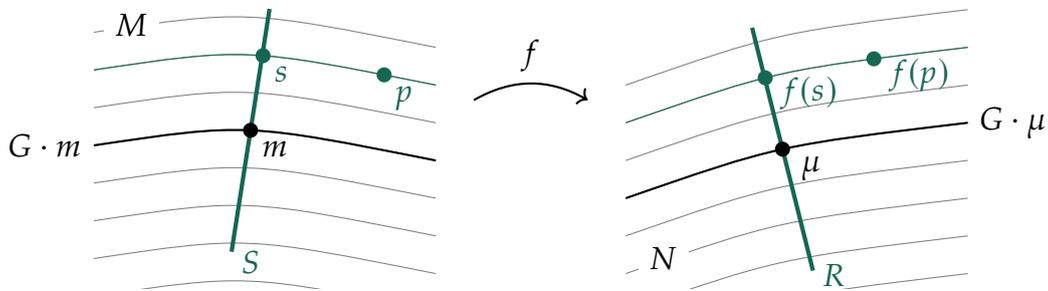
\begin{figure}[b]
	\centering
	\begin{tikzpicture}
		\begin{scope}
			\path[clip] (-2, -1.5) rectangle (2.5, 2.5);

			\foreach \w in {-1.8, -1.3,...,2.2}
				\draw[thin, gray] (-2, 1.2-1.5+\w) .. controls (0, 1.5-1.5+\w) .. (2.5, 1-1.5+\w);

			\node[fill=white] at (-1.5, 2) {$M$};
		\end{scope}

		\begin{scope}
			\path[clip] (5, -1.5) rectangle (9.5, 2.5);

			\foreach \w in {-1.8, -1.3,...,2.2}
				\draw[thin, gray] (5, 0.5-1.5+\w) .. controls (7, 1.2-1.5+\w) .. (9.5, 1.5-1.5+\w);

			\node[fill=white] at (5.5, -1.1) {$N$};
		\end{scope}

		\draw[thick, black]
			(-2, 1.2-1.5+0.7) .. controls (0, 1.5-1.5+0.7) .. (2.5, 1-1.5+0.7) 
			node[midway](m){} 
			node[pos=0.0](orbitM){};

		\draw[thick, black]
			(5, 0.5-1.5+0.7) .. controls (7, 1.2-1.5+0.7) .. (9.5, 1.5-1.5+0.7) 
			node[midway](mu){} 
			node[pos=1.0](orbitN){};
		
		\draw[ultra thick, green4] ($(m) + (0.25, 1.61)$) -- ($(m) + (-0.25, -1.61)$) node[below right=-0.18 and -0.04]{$S$};
		\draw[ultra thick, green4] ($(mu) + (-0.4, 1.61)$) -- ($(mu) + (0.4, -1.61)$) node[below right=-0.26 and -0.04]{$R$};

		\path (m) node[circle, fill=black, inner sep=2pt]{};
		\path (m) node[below right] {$m$};

		\path (mu) node[circle, fill=black, inner sep=2pt]{};
		\path (mu) node[below right=-0.05 and 0.1] {$\mu$};

		\draw[green4]
			(-2, 1.2-1.5+1.7) .. controls (0, 1.5-1.5+1.7) .. (2.5, 1-1.5+1.7) 
			node[pos=0.548, circle, fill=green4, inner sep=2pt](s){} 
			node[pos=0.9, circle, fill=green4, inner sep=2pt](p){}; 
		\draw[green4]
			(5, 0.5-1.5+1.7) .. controls (7, 1.2-1.5+1.7) .. (9.5, 1.5-1.5+1.7) 
			node[pos=0.432, circle, fill=green4, inner sep=2pt](fs){} 
			node[pos=0.8, circle, fill=green4, inner sep=2pt](fp){}; 
		\path (s) node[below right, green4] {$s$};
		\path (fs) node[below right=-0.17 and 0.05, green4] {$f(s)$};
		\path (p) node[below right, green4] {$p$};
		\path (fp) node[below right=-0.15 and 0.05, green4] {$f(p)$};
		
		\path (orbitM) node[left] {$G \cdot m$};
		\path (orbitN) node[right] {$G \cdot \mu$};

		\draw[->, thick] (3, 1) to[out=30,in=150] node[pos=0.5, above]{$f$} (4.5, 1);
	\end{tikzpicture}
	\caption{Illustration of the idea to capture the part of \( f \) that does not come from the group action in a map \( f_S^R: S \to R \) between the slices \( S \) and \( R \).}
	\label{fig:normalFormEquivariantMap:twoSlices}
\end{figure}
In the finite-dimensional setting, a similar strategy has been used in \parencite[Proposition 2.6]{PflaumWilkin2017} to establish an equivariant submersion theorem under the assumption that \( G \) is compact.

However, for the applications we are interested in, it is often too restrictive to assume that the \( G \)-action on the target has a slice \( R \).
For example, momentum maps in symplectic geometry take values in the dual of the Lie algebra; but the coadjoint action of a non-compact group usually does not possess slices, see, \eg, \parencite[Point~5 in Section~2.3.1]{GuilleminLermanEtAl2009}.
As a solution, one may try to use only a slice \( S \) for the \( G \)-action on \( M \) while leaving the action on \( N \) untouched.
In the context of symplectic reduction, this idea is at the heart of the proof of the Marle--Guillemin--Sternberg normal form of the momentum map.
In this case, the slice \( S \) is constructed using an equivariant version of Darboux's theorem.
However, a close inspection of the proof of the singular symplectic reduction theorem reveals that the Marle--Guillemin--Sternberg Theorem gives actually \emph{not} the most convenient normal form for the study of the geometry of the symplectic quotient.
In fact, several issues arise from using a slice for the \( G \)-action but taking the quotient with respect to the subgroup \( G_\mu \).
This asymmetry was counterbalanced in the proof of the reduction theorem \parencite[Proposition~8.1.2]{OrtegaRatiu2003} by deforming the momentum map \( J: M \to \LieA{g}^* \) using a local diffeomorphism of \( \LieA{g}^* \).
Instead of changing coordinates on the target, we will take a different approach and use a slice on \( M \) for the action of \( G_\mu \) instead of \( G \).
This has also the advantage that \( G_\mu \) is often considerably smaller than \( G \) and, thus, a slice for the subgroup action is easier to construct.

Finally, the symplectic setting is special in that the action of \( G \) on \( \LieA{g}^* \) is linear.
When moving beyond the momentum map example, the \( G \)-action on the target \( N \) is usually nonlinear.
As we have explained above, we cannot use a slice to control the entire \( G \)-action on \( N \).
But the stabilizer \( G_m \) of \( m \)	is compact and its action on \( N \) leaves \( \mu \) invariant due to \( G \)-equivariance of \( f \).
Hence, we can at least hope to linearize the induced action of \( G_m \) around the fixed point \( \mu \).

In summary, we will work with a \( G_\mu \)-slice \( S \) at \( m \in M \) and with a \( G_m \)-slice at \( \mu \in N \) and then bring the restriction of \( f \) to the slice \( S \) in a normal form using the results of \cref{sec:normalFormMap}.

\subsection[General version]{General normal form theorem}

Using this strategy, we will show that a wide class of equivariant maps can be brought into the following equivariant normal form.
\begin{defn}
	\label{def:normalFormEquivariantMap:abstractNormalFormEquivariantMap}
	An \emphDef{abstract equivariant normal form} is a tuple \( (H, X, Y, \hat{f}, f_\singularPart) \) consisting of a compact Lie group \( H \) and a normal form \( (X, Y, \hat{f}, f_\singularPart) \) which is \( H \)-equivariant in the sense that \( X, Y \) are endowed with smooth linear \( H \)-actions, the decompositions \( X = \ker \oplus \coimg \), \( Y = \coker \oplus \img \) are \( H \)-invariant, and the maps \( \hat{f}: \coimg \to \img \), \( f_\singularPart: X \supseteq U \to \coker \) are \( H \)-equivariant, where \( U \subseteq X \) is an \( H \)-invariant open neighborhood of \( 0 \).

	For an equivariant normal form \( (H, X, Y, \hat{f}, f_\singularPart) \) and a Lie group \( G \) with \( H \subseteq G \), define the smooth map \( \normalFormGroup{G}{f}: G \times_{H} U \to G \times_{H} Y \) by
	\begin{equation}
		\normalFormGroup{G}{f} \bigl(\equivClass{g, x_1, x_2}\bigr) \defeq \equivClass*{g, \normalForm{f}(x_1, x_2)} = \equivClass*{g, \hat{f}(x_2) + f_\singularPart(x_1, x_2)}
	\end{equation}
	for \( g \in G \), \( x_1 \in U \intersect \ker \) and \( x_2 \in U \intersect \coimg \).
\end{defn}

A slice for the \( G_\mu \)-action on \( M \) plays a fundamental role in connecting an equivariant map with its equivariant normal form.
Recall from \cref{sec:calculus:groupActionsSlices}, that a \( G_\mu \)-slice at \( m \in M \) is a \( G_m \)-invariant submanifold \( S \) of \( M \) which is transverse to the orbit \( G_\mu \cdot m \) and which possesses some additional properties, see \cref{defn:slice:slice}.
A proper action of a finite-dimensional Lie group on a finite-dimensional manifold always admit slices \parencite{Palais1961}, but this may no longer be true in infinite dimensions and additional assumptions have to be made, see \parencite{DiezSlice}.
In the following, we assume the existence of a slice for the \( G_\mu \)-action at every point \( m \in M \). 
In particular, the natural fibration \( G_\mu \to G_\mu \slash G_m \) is a locally trivial principal bundle and it admits a local section \( \chi: G_\mu \slash G_m \supseteq W \to G_\mu \) defined on an open neighborhood \( W \) of the identity coset \( \equivClass{e} \) in such a way that the map
\begin{equation}
	\chi^S: W \times S \to M, \qquad (\equivClass{g}, s) \mapsto \chi(\equivClass{g}) \cdot s
\end{equation}
is a diffeomorphism onto an open neighborhood of \( m \), \ie, the slice yields convenient product coordinates in a neighborhood of \( m \).
This product structure extends to a semi-local normal form of the orbit using the tube map
\begin{equation}
	\chi^\tube: G_\mu \times_{G_m} S \to M, \qquad \equivClass{g, s} \mapsto g \cdot s,
\end{equation}
which is a \( G_\mu \)-equivariant diffeomorphism onto an open, \( G_\mu \)-invariant neighborhood of \( G_\mu \cdot m \) in \( M \), see \cref{prop:slice:sliceToTube}.
\begin{defn}
	\label{def:normalFormEquivariantMap:normalFormEquivariantMap}
	Let \( f: M \to N \) be a smooth \( G \)-equivariant map.
	For \( m \in M \) write \( \mu = f(m) \in N \), and assume that the stabilizer \( G_\mu \) is a Lie subgroup of \( G \).
	We say that \( f \) can be \emphDef{brought into the equivariant normal form \( (H, X, Y, \hat{f}, f_\singularPart) \)} at \( m \) if \( H = G_m \) and there exist
	\begin{enumerate}
		\item 
			a slice \( S \) at \( m \) for the \( G_\mu \)-action,
		\item 
			a \( G_m \)-equivariant diffeomorphism \( \iota_S: X \supseteq U \to S \subseteq M \), 
		\item 
			a \( G_m \)-equivariant chart \( \rho: N \supseteq V' \to V \subseteq Y \) at \( \mu \) with \( f(S) \subseteq V' \)
	\end{enumerate}
	such that the following diagram commutes:
	\begin{equationcd}[label=eq:normalFormEquivariantMap:bringIntoNormalForm]
		M \to[r, "f"] 
			& N
			\\
		\mathllap{G_\mu \times_{G_m} X \supseteq {}} G_\mu \times_{G_m} U
			\to[r, "\normalFormGroup{G_\mu}{f}"]
			\to[u, "{\chi^\tube} \, \circ \, {(\id_{G_\mu} \times \iota_S)}"]
			& G_\mu \times_{G_m} V \mathrlap{{}\subseteq G_\mu \times_{G_m} Y,}
			\to[u, "\rho^\tube" swap] 
	\end{equationcd}
	where \( \chi^\tube: G_\mu \times_{G_m} S \to M \) is the tube diffeomorphism associated with \( S \) and \( \rho^\tube: G_\mu \times_{G_m} V \to N \) is defined by \( \rho^\tube(\equivClass{g, y}) = g \cdot \rho^{-1}(y) \).
	For short, we say that \( f \) is locally \( G \)-equivalent to \( \normalFormGroup{G_\mu}{f} \).
\end{defn}
Assume that the \( G \)-equivariant map \( f: M \to N \) can be brought into the equivariant normal form \( (G_m, X, Y, \hat{f}, f_\singularPart) \) by using diffeomorphisms \( \iota_S \) and \( \rho \) as above.
Then \( \tangent_m \iota_S: X \to \TBundle_m S \) identifies \( X \) with \( \TBundle_m S \isomorph \TBundle_m M \slash \LieA{g}_\mu \ldot m \), as \( G_m \)-representations.
Similarly, \( \tangent_\mu \rho: \TBundle_\mu N \to Y \) is an isomorphism of \( G_m \)-representations.
Under these identifications, the abstract spaces \( \ker \) and \( \coker \) occurring in the decomposition of \( X \) and \( Y \) are identified with
\begin{equation}
	\ker \isomorph \ker \tangent_m f \slash \LieA{g}_\mu \ldot m,
	\qquad
	\coker \isomorph \TBundle_\mu N \slash \img \tangent_m f.
\end{equation}
These are the first and second homology groups of the chain complex
\begin{equationcd}[label=eq:normalFormEquivariantMap:deformationChain]
	0 \to[r] & \LieA{g}_\mu \to[r, "\tangent_e \Upsilon_m"] & \TBundle_m M \to[r, "\tangent_m f"] & \TBundle_\mu N \to[r] & 0 \,,
\end{equationcd}
where \( \Upsilon_m: G_\mu \to M \) is the orbit map of the \( G_\mu \)-action at \( m \in M \).

At a fixed point for the action, an equivariant normal form signifies that there exist charts which linearize the actions and at the same time bring \( f \) into a normal form (in the sense of \cref{defn:normalFormMap:bringIntoNormalForm}).
In fact, the general case can always be reduced to constructing an equivariant normal form at a fixed point of the action.
\begin{prop}
	\label{prop:normalFormEquivariantMap:reductionToFixedPoint}
	Let \( f: M \to N \) be an equivariant map between \( G \)-manifolds.
	Choose \( m \in M \) and set \( \mu = f(m) \).
	Assume that the following conditions hold:
	\begin{thmenumerate}
		\item
			The stabilizer subgroup \( G_\mu \) of \( \mu \) is a Lie subgroup of \( G \).
		\item 
			The induced \( G_\mu \)-action on \( M \) admits a slice \( S \) at \( m \).
		\item
			The \( G_m \)-equivariant map \( f^S \equiv \restr{f}{S}: S \to N \) can be brought into an equivariant normal form at \( m \) relative to the induced \( G_m \)-action, \ie, it is locally \( G_m \)-equivalent to a map \( \normalFormGroup{G_m}{f^S}: X \supseteq U \to Y \).
	\end{thmenumerate}
	Then, \( f \) can be brought into an equivariant normal form at \( m \) relative to the \( G \)-action.
	In fact, \( f \) is locally \( G \)-equivalent to
	\begin{equation}
		\normalFormGroup{G_\mu}{f}: G_\mu \times_{G_m} U \to G_\mu \times_{G_m} Y, \qquad \equivClass{g, x} \mapsto \equivClass*{g, \normalFormGroup{G_m}{f^S}(x)}.
		\qedhere
	\end{equation}
\end{prop}
\begin{proof}
	Let \( S \) be a slice at \( m \) for the induced \( G_\mu \)-action on \( M \), with associated tube map \( \chi^\tube: G_\mu \times_{G_m} S \to M \) defined by \( \chi^\tube(\equivClass{g,s}) = g \cdot s \).
	By \( G \)-equivariance of \( f \), the following diagram commutes:
	\begin{equationcd}
		M \to[r, "f"]
			& N
			\\
		G_\mu \times_{G_m} S \to[r, "f_\chi"] \to[u, "\chi^\tube"]
			& N, \to[u, "\id" swap]
	\end{equationcd}
	where the smooth map \( f_\chi \) is defined by \( f_\chi(\equivClass{g, s}) = g \cdot f(s) \) for \( g \in G_\mu \) and \( s \in S \).
	Thus, the map \( f \) decomposes into the \( G_\mu \)-action and the restriction \( f^S: S \to N \) of \( f \) to the slice.
	Note that \( f^S \) is \( G_m \)-equivariant.
	By assumption, \( f^S \) can be brought into a \( G_m \)-equivariant normal form.
	Since \( m \) is a fixed point for the \( G_m \)-action, this means that there exists charts \( \kappa: S \supseteq U' \to U \subseteq X \) at \( m \) and  \( \rho: N \supseteq V' \to V \subseteq Y \) at \( \mu \) intertwining the \( G_m \)-actions on \( S \) and \( N \) with a linear \( G_m \)-actions on \( X \) and \( Y \), respectively.
	Moreover, \( f(U') \subseteq V' \) and the following diagram commutes
	\begin{equationcd}
		\mathllap{S \supseteq {}} U' \to[r, "f^S"] 
			\to[d, "\kappa" swap]
			& V' \mathrlap{{}\subseteq N}
			\to[d, "\rho"] 
			\\
		\mathllap{X \supseteq {}} U
			\to[r, "\normalFormGroup{G_m}{f^S}"]
			& V \mathrlap{{}\subseteq Y,}
	\end{equationcd}
	where \( \normalFormGroup{G_m}{f^S} = \hat{f}_S + {f^S}_\singularPart \) with maps \( \hat{f}^S \) and \( {f^S}_\singularPart \) as in \cref{def:normalFormEquivariantMap:abstractNormalFormEquivariantMap}.
	By possibly shrinking \( S \), we may assume that \( \kappa \) is defined on the whole of \( S \).
	Define \( \rho^\tube: G_\mu \times_{G_m} V \to N \) by \( \rho^\tube(\equivClass{g, y}) = g \cdot \rho^{-1}(y) \).
	Due to \( G_m \)-equivariance of \( f^S, \kappa \) and \( \rho \), the prescription 
	\begin{equation}
		\normalFormGroup{G_\mu}{f} (\equivClass{g, x}) = \equivClass{g, \rho \circ f^S \circ \kappa^{-1}(x)}, \qquad g \in G_\mu, x \in U
	\end{equation}
	defines a smooth map \( \normalFormGroup{G_\mu}{f}: G_\mu \times_{G_m} U \to G_\mu \times_{G_m} V \) fitting into the following commutative diagram:
	\begin{equationcd}
		M \to[r, "f"]
			& N
		\\
		G_\mu \times_{G_m} S \to[r, "f_\chi"] \to[d, "\id_{G_\mu} \times \kappa", swap]  \to[u, "\chi^\tube"]
			& N \to[u, "\id" swap]
		\\
		G_\mu \times_{G_m} U \to[r, "\normalFormGroup{G_\mu}{f}"]
			& G_\mu \times_{G_m} V. \to[u, "\rho^\tube", swap]
	\end{equationcd}
	Hence \( f \) is brought into the equivariant normal form \( (G_m, X, Y, \hat{f}^S, {f^S}_\singularPart) \).
\end{proof}
At this point, we have completely split off the \( G_\mu \)-action and reduced the problem to constructing an equivariant normal form of the map \( f^{S}: S \to N \) at a \emph{fixed point} of the Lie group \( G_m \).
This is a considerable simplification of the general situation we started with.
In the proposition below we show that equivariant normal forms at fixed points can be constructed using the methods developed in \cref{sec:normalFormMap}.
For this we will need the following notion.
\begin{defn}
We say that a \( G \)-action \emphDef{can be linearized} at a fixed point \(  m \in M \) if there exist a \( G \)-invariant open neighborhood \( U' \) of \( m \) in \( M \), a chart \( \rho: M \supseteq U' \to U \subseteq X \) and a smooth linear \( G \)-action on \( X \) such that \( \rho \) is \( G \)-equivariant.
\end{defn}
\begin{prop}
	\label{prop:normalFormEquivariantMap:aroundFixedPoint}
	Let \( G \) be compact Lie group, and let \( f: M \to N \) be an equivariant map between Fréchet \( G \)-manifolds.
	Choose \( m \in M \) and set \( \mu = f(m) \in N \).
	Assume that \( m \) is a fixed point of the \( G \)-action, \ie \( G_m = G \), and that the following conditions hold:
	\begin{thmenumerate}
		\item
			The \( G \)-actions on \( M \) and \( N \) can be linearized at the fixed points \( m \) and \( \mu \), respectively.
		\item
			{
			\providecommand{\creflastconjunction}{}
			\renewcommand{\creflastconjunction}{~or~}
			The hypotheses of either \cref{prop:normalFormMap:banach,prop:normalFormMap:banachTarget,prop:normalFormMap:banachDomain,prop::normalFormMap:tameFrechet,prop::normalFormMap:elliptic,prop::normalFormMap:ellipticGeometric} are satisfied so that \( f \) can be brought into a normal form (not necessarily equivariant).
			}	
	\end{thmenumerate}
	Then \( f \) can be brought into an equivariant normal form at \( m \).
\end{prop}
\begin{proof}
	By assumption, there exist charts \( \tilde{\kappa}: M \supseteq U' \to U \subseteq X \) at \( m \) and \( \tilde{\rho}: N \supseteq V' \to V \subseteq Y \) at \( \mu \) which are \( G \)-equivariant with respect to linear actions of \( G \) on \( X \) and \( Y \), respectively.
	All the mentioned normal forms established in \cref{sec:normalFormMap}, use the chart deformations \( \psi \) and \( \phi \) introduced in the proof of \cref{prop:normalFormMap:banach} to bring \( f \) into an normal form.
	We claim that in the present setting these local diffeomorphisms can be chosen to be \( G \)-equivariant.
	To see this, consider the topological decompositions \( X = \ker T \oplus \coimg T \) and \( Y = \coker T \oplus \img T \), where \( T \) corresponds to \( \tangent_m f \) under the chart identifications.
	Since \( G \) is compact and \( T \) is \( G \)-equivariant, the complements \( \coimg T \) and \( \coker T \) can be chosen to be \( G \)-invariant according to \parencite[Lemma~3.13]{DiezSlice}.
	By direct inspection, we see that the map \( \psi \) defined in~\eqref{eq:normalFormMap:banach:deformDomain} is a composition of \( G \)-equivariant maps and hence is \( G \)-equivariant.
	The map \( \phi \) defined in~\eqref{eq:normalFormMap:banach:deformTarget} and the singular part \( f_\singularPart \) are \( G \)-equivariant, too, for similar reasons.
\end{proof}
Thus, in summary, we arrive at the following equivariant normal form theorem.
\begin{thm}[Equivariant normal form --- General]
	\label{prop:normalFormEquivariantMap:abstract}
	Let \( f: M \to N \) be an equivariant map between Fréchet \( G \)-manifolds.
	Choose \( m \in M \) and set \( \mu = f(m) \).
	Assume that the following conditions hold:
	\begin{thmenumerate}
		\item
			The stabilizer subgroup \( G_\mu \) of \( \mu \) is a Lie subgroup of \( G \).
		\item 
			The induced \( G_\mu \)-action on \( M \) is proper and admits a slice \( S \) at \( m \).
		\item
			The induced \( G_m \)-action on \( N \) can be linearized at \( \mu \).
		\item
			{
			\providecommand{\creflastconjunction}{}
			\renewcommand{\creflastconjunction}{~or~}
			The restriction \( f^S \equiv \restr{f}{S}: S \to N \) of \( f \) to \( S \) can be brought into a normal form at \( m \) using either \cref{prop:normalFormMap:banach,prop:normalFormMap:banachTarget,prop:normalFormMap:banachDomain,prop::normalFormMap:tameFrechet,prop::normalFormMap:elliptic,prop::normalFormMap:ellipticGeometric}.
			}
	\end{thmenumerate}
	Then, \( f \) can be brought into an equivariant normal form at \( m \) relative to the \( G \)-action.
\end{thm}
\begin{proof}
	Since the \( G \)-action on \( M \) is proper, the stabilizer \( G_m \) is compact.
	By the slice property~\iref{i:slice:linearSlice}, the \( G_m \)-action on the slice \( S \) can be linearized at \( m \).
	Hence, \cref{prop:normalFormEquivariantMap:aroundFixedPoint} implies that \( f^S \) can be brought into an equivariant normal form relative to the \( G_m \)-action.
	The claim now follows from \cref{prop:normalFormEquivariantMap:reductionToFixedPoint}.
\end{proof}
\noindent We now comment on the assumptions of \cref{prop:normalFormEquivariantMap:abstract}.
\begin{enumerate}
	\item
		In finite dimensions, every closed subgroup of a Lie group is a Lie subgroup.
		This shows, in particular, that stabilizer subgroups are Lie subgroups.
		However, it is not known, even for Banach Lie group actions, whether the stabilizer is always a Lie subgroup, see \parencite[Problem IX.3.b]{Neeb2006}.
	\item
		According to \textcite{Palais1961}, proper actions of finite-dimensional Lie groups on finite-dimensional manifolds always admit slices.
		In infinite dimensions, this may no longer be true and additional assumptions have to be made.
		We refer to \parencite{DiezSlice,Subramaniam1986} for general slice theorems in infinite dimensions and \parencite{AbbatiCirelliEtAl1989,Ebin1970,CerveraMascaroEtAl1991} for constructions of slices in concrete examples.
	\item
		By Bochner's Linearization Theorem \parencites[Theorem~1]{Bochner1945}, every action of a compact Lie group on a finite-dimensional manifold can be linearized near a fixed point of the action.
		This result generalizes to actions on Banach manifolds, see \parencite[Proposition~3.1.4]{DiezThesis}.
		However, the proof of Bochner's Linearization Theorem relies on the Inverse Function Theorem and so does not generalize to actions on Fréchet or more general locally convex manifolds.
	\item
		The proof of \cref{prop:normalFormEquivariantMap:aroundFixedPoint} shows that one can use every Normal Form Theorem provided it is equivariant with respect to linear actions of a compact group.
\end{enumerate}
 
\begin{remark}(Slice mapping)
	\label{rem:equivariantNormalForm:relationToSliceMapping}
	Recall from the proof of \cref{prop:normalFormEquivariantMap:reductionToFixedPoint} the diffeomorphism \( \kappa: S \to U \subseteq X \) which brings \( f^S \) into a normal form.
	When \( S \) is viewed as a submanifold of \( M \), then the map \( \Psi = \kappa^{-1} \) satisfies \( \Psi(0) = m \), 
	\begin{equation}
		\TBundle_{\Psi(x)} M = \LieA{g} \ldot \Psi(x) + \img \tangent_{x} \Psi 
	\end{equation}
	for all \( x \in U \), and
	\begin{equation}
		\tangent_{m} f \circ \tangent_{0} \Psi = \hat{f} \circ \pr_{\coimg \tangent_m f^S}.
	\end{equation}
	In the context when \( f \) is the momentum map for a symplectic \( G \)-action, a map with these properties is called a slice map in \parencite[Definition~2.1]{ChossatLewisOrtegaEtAl1999}.
	The construction in \parencite[Proposition~2.2]{ChossatLewisOrtegaEtAl1999} of such a slice map makes no use of slices.
	Thus, our approach has the advantage of bringing the \( G \)-action into a normal form simultaneously.
\end{remark}

\subsection[Finite-dimensional target or domain]{Normal form theorem with finite-dimensional target or domain}
In the remainder of this section, we present variations of the Equivariant Normal Form \cref{prop:normalFormEquivariantMap:abstract} using assumptions that are often easier to verify in applications.
Similar to the discussion in \cref{sec:normalFormMap}, we start with the simplest case and then work through various levels of functional-analytic settings, finishing with the tame Nash--Moser category.
All assumptions of \cref{prop:normalFormEquivariantMap:abstract} are automatically verified in finite dimensions.
\begin{thm}[Equivariant normal form --- finite dimensions]
	\label{prop:normalFormEquivariantMap:finiteDim}
	Let \( G \) be a finite-dimensional Lie group and let \( f: M \to N \) be a smooth \( G \)-equivariant map between finite-dimensional \( G \)-manifolds.
	If the \( G \)-action on \( M \) is proper, then \( f \) can be brought into an equivariant normal form at every point.
\end{thm}
\begin{proof}
	Let \( m \in M \) and \( \mu = f(m) \).
	Since the stabilizer \( G_\mu \) of \( \mu \) is a closed subgroup of a finite-dimensional Lie group, it is a Lie subgroup.
	The induced \( G_\mu \)-action on \( M \) is proper and thus the Slice Theorem of \textcite{Palais1961} implies that there exists a slice \( S \) for the \( G_\mu \)-action at \( m \).
	Properness of the action also implies that \( G_m \) is compact and thus Bochner's Linearization Theorem \parencites[Theorem~1]{Bochner1945}[Theorem~2.2.1]{DuistermaatKolk1999} shows that the \( G_m \)-action on \( N \) can be linearized around the fixed point \( \mu \).
	By \cref{prop:normalFormMap:finiteDim}, \( f^S = \restr{f}{S}: S \to N \) can be brought into a normal form at \( m \).
	Hence, all assumptions of \cref{prop:normalFormEquivariantMap:abstract} are verified and so \( f \) can be brought into an equivariant normal form. 
\end{proof}

This result for the finite-dimensional case can be directly generalized to the case where only one of the manifolds involved is finite-dimensional.
\begin{thm}[Equivariant normal form --- finite-dimensional target]
	\label{prop:normalFormEquivariantMap:finiteDimTarget}
	Let \( G \) be a Lie group and let \( f: M \to N \) be a smooth \( G \)-equivariant map between \( G \)-manifolds, where \( M \) is a Fréchet manifold and \( N \) is a finite-dimensional manifold.
	Let \( m \in M \) and \( \mu = f(m) \).
	If the stabilizer subgroup \( G_\mu \) of \( \mu \) is a Lie subgroup of \( G \) and the induced \( G_\mu \)-action on \( M \) is proper and admits a slice at \( m \), then \( f \) can be brought into an equivariant normal form at \( m \).
\end{thm}
\begin{proof}
	The proof is similar to the one of \cref{prop:normalFormEquivariantMap:finiteDim} using \cref{prop:normalFormMap:banachTarget} to bring \( f^S \) into a normal form.
	We leave the details to the reader.
\end{proof}

\begin{thm}[Equivariant normal form --- finite-dimensional domain]
	\label{prop:normalFormEquivariantMap:finiteDimDomain}
	Let \( G \) be a finite-dimensional Lie group and let \( f: M \to N \) be a smooth \( G \)-equivariant map between \( G \)-manifolds, where \( M \) is finite-dimensional.
	Let \( m \in M \) and \( \mu = f(m) \).
	If the \( G \)-action on \( M \) is proper and the induced \( G_m \)-action on \( N \) can be linearized at \( \mu \), then \( f \) can be brought into an equivariant normal form at \( m \).
\end{thm}
\begin{proof}
	The proof is similar to the one of \cref{prop:normalFormEquivariantMap:finiteDim} using \cref{prop:normalFormMap:banachDomain} to bring \( f^S \) into a normal form.
	We leave the details to the reader.
\end{proof}

\subsection[Tame Fréchet and elliptic version]{Tame Fréchet and elliptic normal form theorem}

Let us now come to a version of \cref{prop:normalFormEquivariantMap:abstract} which lives in the tame Fréchet category. 
\begin{thm}[Equivariant normal form --- Tame Fréchet]
	\label{prop:normalFormEquivariantMap:tame}
	Let \( G \) be a tame Fréchet Lie group and let \( f: M \to N \) be an equivariant map between tame Fréchet \( G \)-manifolds.
	Choose \( m \in M \) and set \( \mu = f(m) \).
	Assume that the following conditions hold:
	\begin{thmenumerate}
		\item
			The stabilizer subgroup \( G_\mu \) of \( \mu \) is a tame Fréchet Lie subgroup of \( G \).
		\item 
			The induced \( G_\mu \)-action on \( M \) is proper and admits a tame slice \( S \) at \( m \).
		\item
			The induced \( G_m \)-action on \( N \) can be linearized at \( \mu \).
		\item
			The chain
			\begin{equationcd}[label=eq:normalFormEquivariantMap:tame:chain]
				0 \to[r] 
					& \LieA{g}_\mu
						\to[r]
					& \TBundle_s M
						\to[r, "\tangent_s f"]
					& \TBundle_{f(s)} N
						\to[r]
					& 0
			\end{equationcd}
			of linear maps parametrized by \( s \in S \) is uniformly\footnotemark{} tame regular at \( m \).
			Here, the first map is the Lie algebra action given by \( \xi \mapsto \xi \ldot s \) for \( \xi \in \LieA{g}_\mu \).
			\footnotetext{To be precise, the chain should be viewed as a chain of vector bundle morphisms over \( S \). Similarly to \cref{defn::normalFormMap:uniformRegularBundle}, uniform regularity is then defined relative to vector bundle trivializations by requiring that the associated family of linear chains is uniformly regular in the sense of \cref{defn:normalFormLinearMap:ellipticComplexes:uniformRegular}.}
	\end{thmenumerate}
	Then, \( f \) can be brought into an equivariant normal form at \( m \). 
\end{thm}
\begin{proof}
	Let \( S \) be a tame \( G_\mu \)-slice at \( m \) modeled on the tame Fréchet space \( X \).
	According to \cref{prop:normalFormEquivariantMap:abstract}, we have to show that the map \( f^S: S \to N \) satisfies the hypotheses of \cref{prop::normalFormMap:tameFrechet}.
	For this purpose, consider slice coordinates as in \iref{i::slice:SliceDefLocallyProduct} of \cref{defn:slice:slice}.
	That is, let \( \chi: W \to G_\mu \) be a local section of \( G_\mu \to G_\mu \slash G_m \) defined on an open neighborhood \( W \) of the identity coset \( \equivClass{e} \) such that the map \( \chi^S: W \times S \to M \) defined by \( \chi^S(\equivClass{g}, s) = \chi(\equivClass{g}) \cdot s \) is a diffeomorphism onto its image.
	Clearly, \( \tangent_{\equivClass{e}} \chi: \LieA{g}_\mu \slash \LieA{g}_m \to \LieA{g}_\mu \) is a continuous right inverse of the projection \( \LieA{g}_\mu \to \LieA{g}_\mu \slash \LieA{g}_m \) and thus induces a topological isomorphism \( \LieA{g}_\mu \isomorph (\LieA{g}_\mu \slash \LieA{g}_m) \times \LieA{g}_m \).
	With respect to this decomposition, we write elements \( \xi \in \LieA{g}_\mu \) as pairs \( (\equivClass{\xi}, \xi_{\LieA{g}_m}) \) with \( \equivClass{\xi} \in \LieA{g}_\mu \slash \LieA{g}_m \) and \( \xi_{\LieA{g}_m} \in \LieA{g}_m \).
	Let \( \iota_S: X \supseteq U \to S \) be a chart of \( S \) which linearizes the \( G_m \)-action as in~\iref{i:slice:linearSlice}, and let \( \rho: N \supseteq V' \to V \subseteq Y \) be a chart at \( \mu \) which linearizes the \( G_m \)-action on \( N \).
	We denote the chart representation of \( f^S: S \to N \) by \( f^S_\rho: X \supseteq U \to V \subseteq Y \).
	With respect to the local trivialization induced by \( \chi^S \circ (\id_W \times \iota_S) \) and \( \rho \), the chain~\eqref{eq:normalFormEquivariantMap:tame:chain} yields a family of chains
	\begin{equationcd}[label=eq:normalFormEquivariantMap:tame:chainLocal]
		0 \to[r] 
			& \LieA{g}_\mu
				\to[r, "\Gamma_x"]
			& \LieA{g}_\mu \slash \LieA{g}_m \times X
				\to[r, "\Xi_x"]
			& Y
				\to[r]
			& 0
	\end{equationcd}
	parametrized by \( x \in U \), where the tame smooth families of linear maps \( \Gamma: U \times \LieA{g}_\mu \to \LieA{g}_\mu \slash \LieA{g}_m \times X \) and \( \Xi: U \times \LieA{g}_\mu \slash \LieA{g}_m \times X \to Y \) are defined by 
	\begin{equation}
		 \Gamma_x (\xi) = (\equivClass{\xi}, \xi_{\LieA{g}_m} \ldot x),
	\end{equation}
	for \( \xi \in \LieA{g}_\mu \), and 
	\begin{equation}
		\Xi_x(\equivClass{\xi}, v) = \tangent_{f(\iota_S(x))} \rho \bigl((\tangent_{\equivClass{e}} \chi (\equivClass{\xi})) \ldot f(\iota_S(x))\bigr) + \tangent_x f^S_\rho (v)
	\end{equation}
	for \( \equivClass{\xi} \in \LieA{g}_\mu \slash \LieA{g}_m \) and \( v \in X \).
	Since the chain~\eqref{eq:normalFormEquivariantMap:tame:chain} is uniformly tame regular at \( m \), we can assume that the chain~\eqref{eq:normalFormEquivariantMap:tame:chainLocal} is uniformly tame regular at \( x = 0 \).
	Note that \( \img \Gamma_0 = \LieA{g}_\mu \slash \LieA{g}_m \times \set{0} \).
	Thus, \cref{prop:normalFormLinearMap:ellipticComplexes:uniformRegularEquiv} implies that the family
	\begin{equation}
		\restr{(\Xi_x)}{\set{\equivClass{0}} \times X} = \tangent_x f^S_\rho: X \mapsto Y
	\end{equation}
	of linear maps parametrized by \( x \in U \) is uniformly tame regular at \( 0 \).
	Thus, by \cref{prop::normalFormMap:tameFrechet}, \( f^S \) can be brought into a normal form, which completes the proof.
\end{proof}
\begin{remark}
	\label{rem:normalFormEquivariantMap:weakUsingNearSlices}
	In \cref{sec:normalFormMap}, we have seen that a smooth map can be brought into a (tame) normal form if and only if its differential is uniformly (tame) regular.
	It is perceivable that a similar equivalence holds in the equivariant setting as well.
	Indeed, uniform regularity of the left arm of~\eqref{eq:normalFormEquivariantMap:tame:chain} should imply that the action admits a near-slice in the sense of \parencite[Definition~2.1.6]{Palais1961}, which is a weaker notion of a slice only requiring the slice coordinates~\iref{i::slice:SliceDefLocallyProduct}.
	The above arguments then should show that the restriction of the map to the near-slice can be brought into a normal form.
	Working out the details of this idea is left for future work.

	We emphasize that the additional topological properties distinguishing a slice from a near-slice will become important in the study of moduli spaces in \cref{sec:moduliSpaces}, see \cref{rem:quotientsLevelSets:kuranishiNotNeedsAllSliceProperties}.
	If one is not interested in these topological features (for example, because the focus lies on rigidity/stability phenomena), then only requiring the tame uniform regularity of the deformation complex is a promising approach.
	We refer the reader to \parencite[Section~III.3.1]{Hamilton1982} for work in this direction which is concerned with the case where the deformation complex has trivial homology.
\end{remark}

For the following elliptic version of the Equivariant Normal Form Theorem, the reader might want to recall the notion of a geometric Fréchet manifold from \cref{defn:normalFormMap:geometric}.
In particular, this theorem applies to nonlinear differential operators that are equivariant under groups of diffeomorphisms or gauge transformations.
\begin{thm}[Equivariant normal form --- elliptic]
	\label{prop:normalFormEquivariantMap:elliptic}
	Let \( G \) be a tame Fréchet Lie group and let \( f: M \to N \) be an equivariant map between tame Fréchet \( G \)-manifolds.
	Let \( m \in M \) and \( \mu = f(m) \).
	Assume that the following conditions hold:
	\begin{thmenumerate}
		\item
			The stabilizer subgroup \( G_\mu \) of \( \mu \) is a geometric tame Fréchet Lie subgroup of \( G \).
		\item 
			The induced \( G_\mu \)-action on \( M \) is proper and admits a geometric slice \( S \) at \( m \).
		\item
			The induced \( G_m \)-action on \( N \) can be linearized at \( \mu \).
		\item
			The chain
			\begin{equationcd}[label=eq:normalFormEquivariantMap:elliptic:chain]
				0 \to[r] 
					& \LieA{g}_\mu
						\to[r]
					& \TBundle_s M
						\to[r, "\tangent_s f"]
					& \TBundle_{f(s)} N
						\to[r]
					& 0
			\end{equationcd}
			is a chain of geometric linear maps parametrized by \( s \in S \), which is an elliptic complex at \( m \).
	\end{thmenumerate}
	Then, \( f \) can be brought into an equivariant normal form at \( m \).
\end{thm}
\begin{proof}
	The claim follows directly as a special case of \cref{prop:normalFormEquivariantMap:tame}, because, according to \cref{prop:normalFormLinearMap:familiesEllipticOps:ellipticComplexUniformlyTameRegular}, a tame family of chains of differential operators is uniformly tame regular in a neighborhood of a point at which the chain is an elliptic complex.
\end{proof}

\section{Moduli Spaces}
\label{sec:moduliSpaces}
In abstract terms, a moduli space is a space whose points parametrize isomorphism classes of geometric objects.
Usually, one is mainly interested in a subclass of geometric objects satisfying an additional condition, which is often phrased in the form of a partial differential equation.
In the following, we will consider moduli spaces fitting into the following general setup.
Let \( f: M \to N \) be an equivariant map between \( G \)-manifolds.
For every \( \mu \in N \), set
\begin{equation}
	\check{M}_\mu \equiv f^{-1}(\mu) \slash G_\mu,
\end{equation}
where \( G_\mu \) is the stabilizer subgroup of \( \mu \) under the \( G \)-action on \( N \).
Here, \( M \) is the space of geometric objects, the equation \( f(m) = \mu \) describes the additional properties of these objects one is interested in, and the \( G \)-action implements the notion of equivalence.
Because of the flexible nature of this general setting, many well-known moduli spaces fit into this framework, see \cref{sec:yangMillsASD,sec:seibergWitten,sec:pseudoholomorphicCurves} for applications.
Moreover, symplectic quotients are examples of these abstract moduli spaces, and the general theory developed in the next sections lays the foundation to generalize singular symplectic reduction to infinite dimensions, which will be presented elsewhere \parencite{DiezThesis,DiezSingularReduction}.

In the simplest case, when \( \mu \) is a regular value of \( f \) and \( G_\mu \) acts freely, the space \( \check{M}_\mu \) is a smooth manifold.
However, in general, the moduli space has a complicated geometry with singularities.
In this section, we investigate the local structure of the moduli space \(\check{M}_\mu = f^{-1}(\mu) \slash G_\mu \) under the assumption that \( f \) can be brought into a normal form.
In this case, \( \check{M}_\mu \) can be locally identified with the quotient of the zero set of a smooth map by the linear action of a compact group, \ie, it has the structure of a Kuranishi space.
In \cref{sec:quotientsLevelSets:orbitTypeStratification}, we find additional conditions on the normal form which ensure that \( \check{M}_\mu \) is stratified by orbit types.

\subsection{Kuranishi structures}
\label{sec:quotientsLevelSets:kuranishiStructures}
Kuranishi spaces were introduced by \textcite[Section~1.5]{FukayaOno1999} in their study of the geometry of moduli spaces of pseudoholomorphic curves.
The notion of a Kuranishi structure builds on ideas of \textcite{Kuranishi1965} for the moduli space of complex structures and of \textcite[Section~II.2]{Donaldson1983} and \textcite[Section~6]{Taubes1982} for moduli problems in gauge theory, see also \parencite[Section~4.2.5]{DonaldsonKronheimer1997}.
\begin{defn}
	\label{defn:quotientsLevelSets:kuranishiStructures:kuranishiChart}
	Let \( \mathscr{X} \) be a topological space.
	A \emphDef{Kuranishi chart} at a point \( x \in \mathscr{X} \) is a tuple \( (V, E, F, H, s, \kappa) \) consisting of the following data:
	\begin{enumerate}
		\item
			an open neighborhood \( V \) of \( 0 \) in a locally convex space \( E \),
		\item
			a locally convex space \( F \),
		\item
			a compact Lie group \( H \) acting linearly and continuously on \( V \) and \( F \),
		\item 
			a smooth \( H \)-equivariant map \( s: V \to F \),
		\item
			a homeomorphism \( \kappa \) from \( s^{-1}(0) \slash H \) to a neighborhood of \( x \) in \( \mathscr{X} \).
	\end{enumerate}
	The bundle \( F \times V \to V \) is called the \emphDef{obstruction bundle} and \( s \) is referred to as the \emphDef{obstruction map}.
	If \( E \) and \( F \) are finite-dimensional vector spaces, then the Kuranishi chart is said to be \emphDef{finite-dimensional}.
\end{defn}
Informally speaking, a space admitting Kuranishi charts is locally modeled on the quotient of the zero set of a smooth map by a compact group.
For a finite-dimensional Kuranishi chart \( (V, E, F, H, s, \kappa) \) at \( x \in \mathscr{X} \), the number
\begin{equation}
	\dim E - \dim F - \dim H
\end{equation}
is called the \emphDef{virtual dimension} of \( \mathscr{X} \) (at \( x \)).
This is motivated by the fact that \( s^{-1}(0) \slash H \) forms a manifold of this dimension at regular points of \( s \) for which the \( H \)-action is free.
\begin{remarks}
	\item
		If the map \( s \) vanishes and \( H \) is a finite group acting faithfully on \( E \), then the Kuranishi chart \( (V, E, F, H, s, \kappa) \) reduces to an infinite-dimensional orbifold chart, \ie, \( \mathscr{X} \) is locally modeled on the quotient of \( E \) by a finite group action.
		If, in addition, the \( H \)-action on \( E \) is trivial, then we obtain an ordinary manifold chart on \( \mathscr{X} \).
	\item
		Our notion of a finite-dimensional Kuranishi chart is a generalization of the one proposed by \textcite[Definition~A.1.1]{OhOhtaOnoEtAl2009}.
		There, \( H \) is assumed to be a finite group (acting effectively on \( V \)).
		Finiteness of \( H \) is a natural assumption in the context of moduli spaces of pseudoholomorphic curves, but it is too restrictive in our more general setting.
		Moreover, we do not require \( \mathscr{X} \) to be compact nor to be endowed with a metric as it is done in \parencite{OhOhtaOnoEtAl2009}.
		
		Usually, only finite-dimensional Kuranishi charts are discussed in the literature.
		However, for general moduli spaces, one cannot expect the Kuranishi chart to be finite-dimensional.
		As we will see below in \cref{rem:quotientsLevelSets:fredholmFiniteDimKranishiStructure}, this amounts to requiring that a certain complex is Fredholm.
	\item
		In order to define a Kuranishi structure on \( \mathscr{X} \), in a similar spirit to the smooth structure of a manifold, one needs to introduce coordinate transition maps in order to glue together different Kuranishi charts.
		A variety of definitions of suitable chart transitions are proposed in the literature, each with their own advantages and functorial properties.
		We refer the reader to \parencite[Appendix~A]{Joyce2014} for a review on this matter.
	\item
		\label{rem:quotientsLevelSets:kuranishiStructures:polyfold}
		Recently, \citeauthor{HoferWysockiZehnder2014} have introduced the polyfold framework as a different approach to deal with the analytic and geometric issues occurring in the study of moduli spaces in symplectic field theory, see \parencite{HoferWysockiZehnder2014,HoferWysockiZehnder2017} and references therein.
		We refer to \parencite{Yang2014} for an extensive discussion of the relation of Kuranishi structures and polyfold theory.
		A detailed comparison of the tame Fréchet category with the so-called scale calculus that underlies the polyfold framework can be found in \parencite{Gerstenberger2016}.
\end{remarks}

The equivariant normal form studied in the previous section is a suitable tool to obtain information about the local structure of the moduli space. 
\begin{thm}
	\label{prop:quotientsLevelSets:kuranishiByNormalForm}
	Let \( f: M \to N \) be an equivariant map between \( G \)-manifolds.
	Choose \( \mu \in N \) such that \( f^{-1}(\mu) \) is not empty.
	Assume that the stabilizer subgroup \( G_\mu \) of \( \mu \) is a Lie subgroup of \( G \).
	If \( f \) can be brought into an equivariant normal form at every point of \( f^{-1}(\mu) \), then there exists a Kuranishi chart on \( \check{M}_\mu \equiv f^{-1}(\mu) \slash G_\mu \) at every point.
\end{thm}
\begin{proof}
	Let \( m \in f^{-1}(\mu) \), and let \( (G_m, X, Y, \hat{f}, f_\singularPart) \) be an abstract equivariant normal form of \( f \) at \( m \) in the sense of \cref{def:normalFormEquivariantMap:abstractNormalFormEquivariantMap}, with associated decompositions \( X = \ker \oplus \coimg \) and \( Y = \coker \oplus \img \).
	Consider a slice \( S \) at \( m \) for the \( G_\mu \)-action, diffeomorphic to the domain \( U \subseteq X \) of \( f_\singularPart \) via a \( G_m \)-equivariant diffeomorphism \( \iota_S: X \supseteq U \to S \).
	In the following, we suppress the slice diffeomorphism \( \iota_S \). 
	Denote the tube diffeomorphism associated with \( S \) by \( \chi^\tube: G_\mu \times_{G_m} U \to M \).
	For a \( G_m \)-equivariant chart \( \rho: N \supseteq V' \to V \subseteq Y \) at \( \mu \) define \( \rho^\tube: G_\mu \times_{G_m} V \to N \) by \( \rho^\tube(\equivClass{g, y}) = g \cdot \rho^{-1}(y) \).
	According to \cref{def:normalFormEquivariantMap:normalFormEquivariantMap}, we can choose \( S \) and \( \rho \) so that the following diagram commutes:
	\begin{equationcd}[label=eq:quotientsLevelSets:kuranishiByNormalForm:normalFormDiag]
		M \to[r, "f"] 
			& N
			\\
		\mathllap{G_\mu \times_{G_m} X \supseteq {}} G_\mu \times_{G_m} U
			\to[r, "\normalFormGroup{G_\mu}{f}"]
			\to[u, "\chi^\tube"]
			& G_\mu \times_{G_m} V \mathrlap{{}\subseteq G_\mu \times_{G_m} Y,}
			\to[u, "\rho^\tube" swap] 
	\end{equationcd}
	where \( \normalFormGroup{G_\mu}{f} \) is defined by \( \normalFormGroup{G_\mu}{f}(\equivClass{g, x_1, x_2}) = \equivClass{g, \hat{f}(x_2) + f_\singularPart(x_1, x_2)} \) for \( g \in G_\mu \), \( x_1 \in U \intersect \ker \) and \( x_2 \in U \intersect \coimg \).
	Note that \( (\rho^\tube)^{-1}(\mu) = G_\mu \times_{G_m} \set{0} \), by the definition of \( \rho^\tube \).
	Using the commutative diagram~\eqref{eq:quotientsLevelSets:kuranishiByNormalForm:normalFormDiag}, we hence obtain
	\begin{equation}\begin{split}
		(\chi^\tube)^{-1} \bigl(f^{-1}(\mu)\bigr)
			&= \normalFormGroup{G_\mu}{f}^{-1} \bigl( (\rho^\tube)^{-1}(\mu)\bigr)
			\\
			&= \normalFormGroup{G_\mu}{f}^{-1} \bigl(G_\mu \times_{G_m} \set{0}\bigr)
			\\
			&= G_\mu \times_{G_m} \set*{(x_1, 0) \in U \given f_\singularPart(x_1, 0) = 0}.
	\end{split}\end{equation}
	By \( G_\mu \)-equivariance, the tube diffeomorphism \( \chi^\tube \) thus induces a local homeomorphism of \( \check{M}_\mu = f^{-1}(\mu) \slash G_\mu \) with
	\begin{equation}
		\set*{(x_1, 0) \in U \given f_\singularPart(x_1, 0) = 0} \slash G_m.
	\end{equation}
	This local homeomorphism provides a Kuranishi chart on \( \check{M}_\mu \) with \( E = \ker \), \( V = U \intersect \ker \), \( F = \coker \), \( H = G_m \) and \( s = f_\singularPart (\cdot, 0): V \to F \) in the notation of \cref{defn:quotientsLevelSets:kuranishiStructures:kuranishiChart}.
\end{proof}

\begin{remark}
	\label{rem:quotientsLevelSets:fredholmFiniteDimKranishiStructure}
	In the setting of \cref{prop:quotientsLevelSets:kuranishiByNormalForm}, consider the chain complex
	\begin{equationcd}[label=eq:quotientsLevelSets:fredholmFiniteDimKranishiStructure:chain]
		0 \to[r] & \LieA{g}_\mu \to[r, "\tangent_e \Upsilon_m"] & \TBundle_m M \to[r, "\tangent_m f"] & \TBundle_\mu N \to[r] & 0 \,,
	\end{equationcd}
	where \( \Upsilon_m: G_\mu \to M \) is the orbit map of the \( G_\mu \)-action at \( m \in M \).
	Recall from~\eqref{eq:normalFormEquivariantMap:deformationChain} that the homology groups of this complex are identified with the abstract spaces
	\begin{equation}
		\LieA{g}_m \isomorph \LieA{h},
		\qquad
		\ker \tangent_m f \slash \LieA{g}_\mu \ldot m \isomorph \ker,
		\qquad
		\TBundle_\mu N \slash \img \tangent_m f \isomorph \coker,
	\end{equation}
	occurring in the equivariant normal form.
	For the Kuranishi charts on \( \check{M}_\mu \) constructed in the proof of \cref{prop:quotientsLevelSets:kuranishiByNormalForm}, we have \( H = G_m \), \( E = \ker \) and \( F = \coker \).
	Thus, these Kuranishi charts are finite-dimensional if and only if the complex~\eqref{eq:quotientsLevelSets:fredholmFiniteDimKranishiStructure:chain} is Fredholm.
	In this case, the Euler characteristic of the complex~\eqref{eq:quotientsLevelSets:fredholmFiniteDimKranishiStructure:chain},
	\begin{equation}
		\dim \LieA{g}_m - \dim \ker + \dim \coker,
	\end{equation}
	is (minus) the virtual dimension of \( \check{M}_\mu \).
\end{remark}
\begin{remark}
	\label{rem:quotientsLevelSets:kuranishiNotNeedsAllSliceProperties}
	The proof of \cref{prop:quotientsLevelSets:kuranishiByNormalForm} relies in an essential way on the fact that the tube map \( \chi^\tube: G_\mu \times_{G_m} S \to M \) associated with the slice \( S \) is a diffeomorphism onto an open \( G_\mu \)-invariant neighborhood of the orbit \( G \cdot m \).
	This is also the first instance where we use injectivity of \( \chi^\tube \), \ie, slice property \iref{i::slice:SliceDefOnlyStabNotMoveSlice}.
	A close inspection of the proof of \cref{prop:quotientsLevelSets:kuranishiByNormalForm} shows that for the Kuranishi structure on \( \check{M}_\mu \) one actually needs only the restriction of \( \chi^\tube \) to \( G_\mu \times_{G_m} \bigl(S \intersect f^{-1}(\mu)\bigr) \) to be injective.
	This is equivalent to the following weaker version of \iref{i::slice:SliceDefOnlyStabNotMoveSlice}:
	\begin{enumerate}[leftmargin=7ex]
		\item[(SL2\(_\mu \))]
			Any \( g \in G \) for which \( g \cdot \bigl(S \intersect f^{-1}(\mu)\bigr) \) intersects non-trivially with \( S \intersect f^{-1}(\mu) \) is necessarily an element of the stabilizer \( G_m \).
	\end{enumerate}
	Recall that the slice property \iref{i::slice:SliceDefOnlyStabNotMoveSlice} is usually established using some form of compactness of the group action, \eg, properness.
	Thus, for (SL2\(_\mu \)), one needs such a compactness property only on the solution set \( f^{-1}(\mu) \).
	This observation enables one to use compactness arguments specific to the partial differential equation under study.
	We will encounter such a case for pseudoholomorphic curves where Gromov's Compactness Theorem ensures that (SL2\(_\mu \)) holds although \iref{i::slice:SliceDefOnlyStabNotMoveSlice} is not satisfied in general, see \cref{sec:pseudoholomorphicCurves}.
	
	Moreover, we so far did not make any use of property \iref{i::slice:SliceDefPartialSliceSubmanifold} which will only play a fundamental role for the orbit type stratification discussed in \cref{sec:quotientsLevelSets:orbitTypeStratification} below.
	Thus, we expect that a version of \cref{prop:quotientsLevelSets:kuranishiByNormalForm} also holds in the more general setting, where one only requires the weak equivariant normal form that uses a near-slice instead of a slice, as discussed in \cref{rem:normalFormEquivariantMap:weakUsingNearSlices}, and the weak slice property (SL2\(_\mu \)). 
\end{remark}

Since every equivariant map between finite-dimensional \( G \)-manifolds can be brought into an equivariant normal form according to \cref{prop:normalFormEquivariantMap:finiteDim}, we obtain the following corollary of \cref{prop:quotientsLevelSets:kuranishiByNormalForm}.
\begin{coro}
	\label{prop:quotientsLevelSets:kuranishiFiniteDim}
	Let \( f: M \to N \) be an equivariant map between finite-dimensional \( G \)-manifolds and let \( \mu \in N \).
	If the \( G_\mu \)-action on \( M \) is proper, then \( \check{M}_\mu \equiv f^{-1}(\mu) \slash G_\mu \) admits a Kuranishi chart at every point.
\end{coro}
\begingroup
\makeatletter
\@cref@compressfalse
\makeatother
Of course, the Equivariant Normal Form \cref{prop:normalFormEquivariantMap:finiteDimTarget,prop:normalFormEquivariantMap:finiteDimDomain,prop:normalFormEquivariantMap:tame,prop:normalFormEquivariantMap:elliptic} give similar corollaries in infinite dimensions.
\endgroup

\subsection{Orbit type stratification}
\label{sec:quotientsLevelSets:orbitTypeStratification}

As we have seen, the abstract moduli space \( \check{M}_\mu = f^{-1}(\mu) \slash G_\mu \) admits Kuranishi charts if \( f \) can be brought into an equivariant normal form.
Under additional conditions on the equivariant normal form, \( \check{M}_\mu \) has an even more pleasant structure.
For the following result, the reader might want to recall the notions of orbit type and stratification from \cref{sec:calculus:groupActionsSlices}.
Roughly speaking, a stratification is a decomposition of a space such that the pieces are manifolds and a certain approximation condition holds when one approaches the boundary of a piece.
These properties have a local nature and, in fact, can be ensured by requiring the equivariant normal form to satisfy additional conditions.
\begin{prop}
	\label{prop:quotientsLevelSets:orbitTypeStratification}
	Let \( f: M \to N \) be a smooth equivariant map between \( G \)-manifolds, and let \( \mu \in N \).
	Assume that the stabilizer subgroup \( G_\mu \) is a Lie subgroup of \( G \) and that the induced action of \( G_\mu \) on \( M \) is proper.
	Moreover, assume that \( f \) can be brought into an equivariant normal form \( (G_m, X, Y, \hat{f}, f_\singularPart) \) at every \( m \in f^{-1}(\mu) \) such that the following holds:
	\begin{thmenumerate}
		\item
			(Submanifold property)
			The set
			\begin{equation}
				\set*{(x_1, 0) \in U_{(G_m)} \given f_\singularPart(x_1, 0) = 0} = f_\singularPart^{-1}(0) \intersect {\ker} \intersect X_{(G_m)}
			\end{equation}
			is a submanifold of \( X_{(G_m)} \).
		\item
			(Approximation property)
			For every orbit type \( (K) \leq (G_m) \) of the \( G_\mu \)-action on \( M \), the point \( 0 \) lies in the closure of \( f_\singularPart^{-1}(0) \intersect {\ker} \intersect X_{(K)} \) in \( X \). 
	\end{thmenumerate}
	Then, the partition of \( f^{-1}(\mu) \) into the orbit type subsets \( f^{-1}(\mu) \intersect M_{(H)} \) is a stratification.
	Moreover, the decomposition of \( \check{M}_\mu = f^{-1}(\mu) \slash G_\mu \) into the sets \( \bigl(f^{-1}(\mu) \intersect M_{(H)}\bigr) \slash G_\mu \) is a stratification, too. 
\end{prop}
\begin{proof}
	Let \( m \in f^{-1}(\mu) \) and let \( (G_m, X, Y, \hat{f}, f_\singularPart) \) be an equivariant normal form of \( f \) at \( m \) satisfying the submanifold and approximation properties.
	In the proof of \cref{prop:quotientsLevelSets:kuranishiByNormalForm}, we have seen that the \( G_\mu \)-equivariant tube diffeomorphism \( \chi^\tube \) identifies the level set \( f^{-1}(\mu) \) locally with
	\begin{equation}
		G_\mu \times_{G_m} \set*{(x_1, 0) \in U \given f_\singularPart(x_1, 0) = 0}.
	\end{equation}
	Accordingly, \( f^{-1}(\mu) \intersect S_{(G_m)} \) is identified with the set
	\begin{equation}
		\set*{(x_1, 0) \in U_{(G_m)} \given f_\singularPart(x_1, 0) = 0}.
	\end{equation}
	Since the latter is a submanifold of \( X_{(G_m)} \) by the submanifold property, we conclude that \( f^{-1}(\mu) \intersect S_{(G_m)} \) is a closed submanifold of \( S_{(G_m)} \).
	Moreover, the approximation property entails that, for every \( G_\mu \)-orbit type \( (K) \leq (G_m) \), the point \( m \) lies in the closure of \( f^{-1}(\mu) \intersect S_{(K)} \).
	Thus, the claims follow from \cref{prop:slice:orbitTypeStratificationSubset}.
\end{proof}

\subsection{Application: Moduli space of anti-self-dual connections}
\label{sec:yangMillsASD}

The local structure of the moduli space of anti-self-dual Yang--Mills connections is well understood, see, \eg, \parencite{DonaldsonKronheimer1997}.
We will show how these results can be rederived with relatively small effort using the general framework developed in the previous sections.

Consider a principal \( G \)-bundle \( \pi: P \to M \) over a compact \( n \)-dimensional Riemannian manifold \( M \) with \( G \) being a compact Lie group.
A connection in \( P \) is a \( G \)-equivariant splitting of the tangent bundle \( \TBundle P = \VBundle P \oplus \HBundle P \) into the canonical vertical distribution \( \VBundle P \) and a horizontal distribution \( \HBundle P \).
Equivalently, a connection \( A \) in \( P \) yields a splitting of the Atiyah sequence
\begin{equationcd}
	0
		\to[r]
	& \AdBundle P
		\to[r]
	& \TBundle P \slash G
		\to[r, "\tangent \pi"]
	& \TBundle M
		\to[r]
	& 0.
\end{equationcd}
Accordingly, the space \( \ConnSpace(P) \) of connections on \( P \) is identified with the space of sections of an affine bundle over \( M \), and in this way \( \ConnSpace(P) \) carries a natural tame Fréchet manifold structure modeled on the vector space \( \DiffFormSpace^1(M, \AdBundle P) \), \cf \parencite{AbbatiCirelliEtAl1986}.
The covariant derivative and the curvature of a connection \( A \) are denoted by \( \dif_A \) and \( F_A \), respectively.

The group \( \GauGroup(P) \) of local gauge transformations on \( P \) is a tame Fréchet Lie group, because it is realized as the space of smooth sections of the group bundle \( P \times_G G \), see \parencite{CirelliMania1985} for details.
The natural left action of \( \GauGroup(P) \) on \( \ConnSpace(P) \) is given by
\begin{equation}
	A \mapsto \AdAction_{\lambda} A + \lambda \dif \lambda^{-1},
\end{equation}
for \( \lambda \in \GauGroup(P) \).
This action is proper, see \parencite{Diez2013,RudolphSchmidtEtAl2002}.
Moreover, it admits a slice \( \SectionSpaceAbb{S}_{A_0} \) at every \( A_0 \in \ConnSpace \) given by the Coulomb gauge condition.
That is\footnote{Here, as usual, \( \dif^*_{A} \alpha \defeq (-1)^k \hodgeStar \dif_{A} \hodgeStar \alpha \) for a \( k \)-form \( \alpha \).},
\begin{equation}
	\SectionSpaceAbb{S}_{A_0} \defeq \set{A \in \SectionSpaceAbb{U} \given \dif_{A_0}^* (A - A_0) = 0},
\end{equation}
where \( \SectionSpaceAbb{U} \) is a suitable open neighborhood of \( A_0 \) in \( \ConnSpace(P) \).
One uses the Nash--Moser Inverse Function Theorem to show that \( \SectionSpaceAbb{S}_{A_0} \) is a slice.
The details can be found in \parencite{Diez2013,AbbatiCirelliEtAl1989}.
In the Banach context, the orbit type decomposition of \( \ConnSpace(P) \) has been extensively studied in \parencite{KondrackiRogulski1986}, see also \parencite{RudolphSchmidtEtAl2002}.
The proof that the decomposition satisfies the frontier condition \parencite[Theorem~4.3.5]{KondrackiRogulski1986} carries over to our Fréchet setting without major changes.
As a consequence of \cref{prop:slice:orbitTypeStratificationSubset}, the decomposition of \( \ConnSpace(P) \) into gauge orbit types is a stratification.
Finally, the gauge orbit types are in bijection with the set of isomorphism classes of holonomy-induced Howe subbundles of \( P \), \cf \parencite[Theorem~3.3]{RudolphSchmidtEtAl2002}.
Using this observation, the classification of orbit types for all classical groups has been accomplished in \parencite{RudolphSchmidtEtAl2002b,RudolphSchmidtEtAl2002a,RudolphSchmidtEtAl2002,HertschRudolphSchmidt2010,HertschRudolphSchmidt2011}.

Coming to the anti-self-dual equation, we specialize to the case where \( M \) is an oriented compact Riemannian manifold of dimension \( 4 \) and \( G \) is a compact, semisimple Lie group.
On a \( 4 \)-dimensional manifold, the Hodge star operator~\( \hodgeStar \) associated with the Riemannian metric satisfies \( \hodgeStar \hodgeStar = \id \) on \( 2 \)-forms and thus determines a decomposition
\begin{equation}
	\DiffFormSpace^2(M) = \DiffFormSpace^2_+(M) \oplus \DiffFormSpace^2_-(M)
\end{equation}
of the space of \( 2 \)-forms into the \( \pm 1 \)-eigenspaces.
A similar decomposition holds for vector-valued \( 2 \)-forms and, in particular, the curvature \( F_A \) of a connection \( A \in \ConnSpace(P) \) can be written as
\begin{equation}
	F_A = F_A^+ + F_A^-
\end{equation}
with \( F_A^\pm \in \DiffFormSpace^2_\pm (M, \AdBundle P) \).
A connection \( A \) with \( F_A^+ = 0 \) is called \emphDef{anti-self-dual (ASD)}.
The Bianchi identity implies that an ASD connection satisfies the Yang--Mills equation.
The self-dual part of the curvature gives a smooth map
\begin{equation}
	F^+: \ConnSpace(P) \to \DiffFormSpace^2_+ (M, \AdBundle P),
\end{equation}
which is equivariant with respect to the natural actions of \( \GauGroup(P) \).
The \emphDef{moduli space of anti-self-dual connections} is, by definition, the space
\begin{equation}
	\SectionSpaceAbb{A} = (F^+)^{-1}(0) \slash \GauGroup(P)
\end{equation}
of anti-self-dual connections on \( P \) modulo gauge equivalence.
This clearly fits into the general framework considered in the previous sections.
Let us verify the conditions of \cref{prop:normalFormEquivariantMap:elliptic}:
\begin{enumerate}
	\item 
		The stabilizer of \( \mu = 0 \in \DiffFormSpace^2_+ (M, \AdBundle P) \) is the whole group \( \GauGroup(P) \), which is a geometric tame Fréchet Lie group with Lie algebra \( \GauAlgebra(P) = \sSectionSpace(\AdBundle P) \).
	\item
		The natural action of \( \GauGroup(P) \) on \( \ConnSpace(P) \) is proper and admits slices as discussed above.
	\item
		The action of \( \GauGroup(P) \) on \( \DiffFormSpace_+^2(M, \AdBundle P) \) is clearly linear.
	\item
		Let \( A \) be an ASD connection and let \( \SectionSpaceAbb{S} \) be a slice at \( A \). 
		The chain~\eqref{eq:normalFormEquivariantMap:tame:chain} here takes the form
		\begin{equationcd}[label=eq:yangMillsASD:chain]
			0 \to[r] 
				& \DiffFormSpace^0(M, \AdBundle P) \to[r, "\dif_B"]
				& \DiffFormSpace^1(M, \AdBundle P) \to[r, "\dif^+_B"]
				& \DiffFormSpace^2_+(M, \AdBundle P) \to[r]
				& 0,
		\end{equationcd}
		where \( B \in \SectionSpaceAbb{S} \) and \( \dif^+_B \) denotes the self-dual part of the covariant derivative \( \dif_B \).
		This chain is clearly a chain of linear differential operators tamely parametrized by the connection \( B \).
		The ASD condition for \( A \) asserts that \( \dif^+_A \circ \dif_A = 0 \) and so, at \(  B = A \), the chain~\eqref{eq:yangMillsASD:chain} is a complex, called the \emphDef{Yang--Mills complex}.
		This complex has been studied first by \textcite{AtiyahHitchinSinger1978}.
		Ellipticity of the Yang--Mills complex is well known and follows from a straightforward computation in linear algebra, see \eg \parencite[Lemma~6.5.2]{RudolphSchmidt2014}.
		Moreover, applying the Atiyah--Singer Index Theorem one finds the following for its Euler characteristic:
		\begin{equation}
			- 2 \pontryaginClass_1 (\AdBundle P) + \frac{1}{2} (\chi_M - \sigma_M) \cdot \dim G ,
		\end{equation}
		where \( \pontryaginClass_1 (\AdBundle P) \) is the Pontryagin index of the adjoint bundle, \( \chi_M \) is the Euler number of \( M \) and \( \sigma_M \) is the signature of \( M \), see \parencite[p.~446]{AtiyahHitchinSinger1978} and \parencite[Lemma~6.5.5]{RudolphSchmidt2014} for a detailed proof.
\end{enumerate}
Hence, by \cref{prop:normalFormEquivariantMap:elliptic}, the map \( F^+ \) can be brought into an equivariant normal form at every ASD connection \( A \in \ConnSpace(P) \).
Moreover, as a consequence of \cref{prop:quotientsLevelSets:kuranishiByNormalForm,rem:quotientsLevelSets:fredholmFiniteDimKranishiStructure}, we obtain the following description of the local geometry of the moduli space of anti-self-dual connections.

\begin{thm}
	\label{prop:yangMillsASD:kuranishi}
	Let \( P \) be a principal \( G \)-bundle with a compact, semisimple structure group \( G \) over a \( 4 \)-dimensional compact Riemannian manifold \( M \).
	Then, the moduli space \( \SectionSpaceAbb{A} \) of anti-self-dual connections on \( P \) admits a finite-dimensional Kuranishi chart at every point.
	Moreover, the virtual dimension of \( \SectionSpaceAbb{A} \) is given by
	\begin{equation}
			2 \pontryaginClass_1 (\AdBundle P) - \frac{1}{2} (\chi_M - \sigma_M) \cdot \dim G.
			\qedhere
		\end{equation}
\end{thm}
Let us describe the constructed Kuranishi charts on \( \SectionSpaceAbb{A} \) in more detail.
For this purpose, let \( A \in \ConnSpace(P) \) be an ASD connection.
According to \cref{rem:quotientsLevelSets:fredholmFiniteDimKranishiStructure}, the linear spaces occurring in definition of a Kuranishi chart at \( \equivClass{A} \in \SectionSpaceAbb{A} \) are given by the cohomology groups
\begin{equation}\begin{split}
	\SectionSpaceAbb{E} &= \deRCohomology^{1, +}_A(M, \AdBundle P) \equiv \ker \dif^+_A \slash \img \dif_A,
	\\
	\SectionSpaceAbb{F} &= \deRCohomology^{2, +}_A(M, \AdBundle P) \equiv \DiffFormSpace^2_{+}(M, \AdBundle P) \slash \img \dif^+_A.
\end{split}\end{equation}
These spaces are finite-dimensional, because the Yang--Mills complex is elliptic.
Moreover, they are endowed with a natural linear action of the compact stabilizer subgroup \( \GauGroup_A(P) \) of \( A \).
Thus, the obstruction map is a \( \GauGroup_A(P) \)-equivariant map
\begin{equation}
	\SectionMapAbb{s}: \deRCohomology^{1, +}_A(M, \AdBundle P) \supseteq \SectionSpaceAbb{V} \to \deRCohomology^{2, +}_A(M, \AdBundle P),
\end{equation}
where \( \SectionSpaceAbb{V} \) is a \( \GauGroup_A(P) \)-invariant, open neighborhood of \( 0 \) in \( \deRCohomology^{1, +}_A(M, \AdBundle P) \).
Finally, the moduli space \( \SectionSpaceAbb{A} \) in a neighborhood of \( \equivClass{A} \) is modeled on the quotient \( \SectionMapAbb{s}^{-1}(0) \slash \GauGroup_A(P) \).
In this way, we recover in our framework the well-known result \parencite[Proposition~4.2.23]{DonaldsonKronheimer1997} concerning the local structure of \( \SectionSpaceAbb{A} \).

For completeness, we include some remarks on how to get further insights into the geometry of \( \SectionSpaceAbb{A} \).
This requires a more precise control of the obstruction map \( \SectionMapAbb{s} \), which is rather difficult to obtain in full generality.
However, in concrete examples, one can often find conditions which ensure that \( \SectionMapAbb{s} \) vanishes.
For example, if \( M \) is a compact self-dual Riemannian manifold with positive scalar curvature, then it can be shown using the Weitzenböck formula that \( \deRCohomology^{2, +}_A(M, \AdBundle P) \) is trivial for every irreducible ASD connection \( A \).
Thus, in this case, the moduli space of irreducible anti-self-dual connections is either empty or a smooth manifold.
This important result was originally obtained by \textcite[Theorem~6.1]{AtiyahHitchinSinger1978}.

For the remainder, we specialize to \( G = \SUGroup(2) \).
This setting was used by \textcite{Donaldson1983} to gain astounding insights into the topology and geometry of \( 4 \)-manifolds.
For \( G = \SUGroup(2) \), we have \( \pontryaginClass_1(\AdBundle P) = 4k \), where \( k \) is the so-called instanton number.
Let us restrict our attention to the case \( k = 1 \).
Moreover, let us assume that \( M \) is simply connected and that its intersection form is positive definite.
These assumptions imply \( \chi_M - \sigma_M = 1 \) so that the virtual dimension of \( \SectionSpaceAbb{A} \) is \( 8 - 3 = 5 \). 
For \( G = \SUGroup(2) \), a reducible connection \( A \) has a stabilizer group conjugate to \( \UGroup(1) \) or to \( \Z_2 \).
Connections with a discrete stabilizer subgroup are flat as a consequence of the Ambrose--Singer Theorem and thus only connections with a stabilizer subgroup conjugate to \( \UGroup(1) \) are of interest for the geometry of \( \SectionSpaceAbb{A} \).
One can show that there are only finitely many gauge-equivalence classes of ASD connections which are reducible to \( \UGroup(1) \).
The Kuranishi charts constructed above determine the structure of \( \SectionSpaceAbb{A} \) in a neighborhood of a singular point.
Let \( A \) be an ASD connection that is reducible to \( \UGroup(1) \).
In the present setting, a straightforward application of the representation theory of \( \UGroup(1) \) yields the following isomorphisms of \( \GauGroup_A(P) \)-representation spaces
\begin{equation}
	\deRCohomology^{1, +}_A(M, \AdBundle P) \isomorph \C^p,
	\qquad
	\deRCohomology^{2, +}_A(M, \AdBundle P) \isomorph \C^q
\end{equation}
for some \( p \) and \( q \) satisfying \( p + q = 3 \), where \( \UGroup(1) \) acts in the usual way on \( \C^p \) and \( \C^q \), see \parencite[Proposition~4.9]{FreedUhlenbeck1984}.
Thus, if \( \deRCohomology^{2, +}_A(M, \AdBundle P) \) is trivial, then, in a neighborhood of the singular point \( \equivClass{A} \), the moduli space \( \SectionSpaceAbb{A} \) is identified with the cone \( \C^3 \slash \UGroup(1) \) over \( \CP^2 \).
Thus, combining \cref{prop:yangMillsASD:kuranishi} with these additional insights, one recovers the important result of \textcite{Donaldson1983}.
The case of a non-trivial cohomology group \( \deRCohomology^{2, +}_A(M, \AdBundle P) \) is more complicated and a perturbation of the metric on \( M \) is required, see \parencite[Theorem~4.19]{FreedUhlenbeck1984}.

The concrete description of the singularities shows that every singular connection in \( \SectionSpaceAbb{A} \) can be approximated by a sequence of irreducible ASD connections.
This implies that \( \SectionSpaceAbb{A} \) is stratified by orbit types. 
In fact, for all examples known to us, the moduli space of anti-self-dual connections turns out to be stratified by orbit types.
We do not know whether this holds true in general.

\subsection{Application: Seiberg--Witten moduli space}
\label{sec:seibergWitten}

This section is concerned with the Seiberg--Witten moduli space first studied in \parencite{SeibergWitten1994,SeibergWitten1994a}.
We show how our general framework recovers the known local structure of this moduli space.
Our presentation follows the textbook \parencite{Nicolaescu2000} in conventions and notation. 

For the convenience of the reader, let us start by recalling basic notions from spin geometry.
For \( n \geq 3 \) even, denote the connected double cover of \( \SOGroup(n) \) by \( \SpinGroup(n) \), which is a principal bundle \( \pr: \SpinGroup(n) \to \SOGroup(n) \) with structure group \( \fundamentalGroup_1 \SOGroup(n) \isomorph \Z_2 \).
We can thus form \( \SpinCGroup(n) = \SpinGroup(n) \times_{\Z_2} \UGroup(1) \), where \( \Z_2 \) acts by multiplication with \( -1 \) on \( \UGroup(1) \).
The irreducible half-spin representations\footnotemark{} \( \gamma_\pm: \SpinGroup(n) \to \UGroup (\SpinRep^\pm_n)  \) extend to representations \( \gamma_{\pm}: \SpinCGroup(n) \to \UGroup(\SpinRep^\pm_n) \) of \( \SpinCGroup(n) \) defined by \( \gamma_\pm(\equivClass{g, z}) (\psi) = z \cdot \gamma(g) (\psi) \).
\footnotetext{For \( n = 4 \), we have \( \SpinRep^\pm_4 \isomorph \C^2 \) and the spin representation \( \gamma_\pm \) is given by
\begin{equation}
	\gamma_\pm: \SpinGroup(4) \to \UGroup(2), \qquad (h_+, h_-) \mapsto h_\pm
\end{equation}
under the isomorphism \( \SpinGroup(4) \isomorph \SUGroup(2) \times \SUGroup(2) \).
}
Moreover, the group homomorphism \( \lambda: \SpinCGroup(n) \to \UGroup(1) \) given by \( \lambda(\equivClass{g, z}) = z^2 \) fits into the following commutative diagram
\begin{equationcd}[label={eq:seibergWitten:spinCDiag}]
	& \UGroup(1)
		\to[d, "\iota_2"]
		\to[dr, "z \mapsto z^2"]
	&
	\\
	  \SpinGroup(n)
	  	\to[r, "\iota_1"]
	  	\to[dr, "\pr", swap]
	& \SpinCGroup(n)
		\to[d, "\pr"]
		\to[r, "\lambda"]
	& \UGroup(1).
	\\
	& \SOGroup(n)
	&
\end{equationcd}

A \( \SpinCGroup \)-structure on an oriented Riemannian manifold \( (M, g) \) is a principal \( \SpinCGroup(n) \)-bundle \( \SpinCBundle M \to M \) together with a vertical principal bundle morphism to the \( \SOGroup(n) \)-frame bundle \( \FrameBundle M \to M \):
\begin{equationcd}[tikz={column sep=large}]
	\SpinCBundle M
		\to[d, "\SpinCGroup(n)"{name=U}]
		\to[r]
	&
	\FrameBundle M
		\to[d, "\SOGroup(n)"{name=D, pos=0.58}, swap]
	\\
	M
		\to[r, "\id_M", swap]
	&
	M.
	\arrow[rightarrow, from=U, to=D]
\end{equationcd}
The unitary spin representations \( \gamma_\pm \) give rise to Hermitian vector bundles
\begin{equation}
	\SpinorBundle^\pm M = \SpinCBundle M \times_{\gamma_\pm} \SpinRep_n^\pm,
\end{equation}
whose sections are called spinors.
The Clifford multiplication \( \R^n \times \SpinRep_n^\pm \to \SpinRep_n^\mp \) is equivariant and thus yields a bundle map \( \cl: \TBundle M \times_M \SpinorBundle^\pm M \to \SpinorBundle^\mp M \).
With a slight abuse of notation, we consider \( \cl \) to be defined on the cotangent bundle as well under the isomorphism \( g^\sharp: \CotBundle M \to \TBundle M \).
Moreover, let
\begin{equation}
	P = \SpinCBundle M \times_{\lambda} \UGroup(1)
\end{equation}
be the principal \( \UGroup(1) \)-bundle associated with the group homomorphism \( \lambda \).
As the maps \( (\pr, \lambda): \SpinCGroup(n) \to \SOGroup(n) \times \UGroup(1) \) induce an isomorphism on the level of Lie algebras, every connection \( A \) on \( P \) lifts together with the Levi-Civita connection (seen as a connection on \( \FrameBundle M \)) to a connection on \( \SpinCBundle M \).
The induced covariant derivative on \( \SpinorBundle^\pm M \) is denoted by \( \nabla_A \).
Given a connection \( A \in \ConnSpace(P) \), the Dirac operator \( \Dif_A \) is the composition
\begin{equationcd}[tikz={column sep=scriptsize}]
	\sSectionSpace(\SpinorBundle^+ M)
		\to[r, "\nabla_A"] 
	& \sSectionSpace(\CotBundle M \tensorProd \SpinorBundle^+ M) 
		\to[r, "\cl"]
	& \sSectionSpace(\SpinorBundle^- M).
\end{equationcd}

In the following, \( M \) is a closed oriented Riemannian manifold of dimension \( 4 \) endowed with a \( \SpinCGroup \)-structure.
The Seiberg--Witten equations for a connection \( A \in \ConnSpace(P) \) and a spinor \( \psi \in \sSectionSpace(\SpinorBundle^+ M) \) are
\begin{subequations}
\label{eq:seibergWitten:seibergWitten}
\begin{align}
	\Dif_A \psi &= 0,
	\\
	F_A^+ &= q(\psi) + \mu
\end{align}
\end{subequations}
where \( F_A^+ \) denotes the self-dual component of the curvature \( F_A \in \DiffFormSpace^2(M, \I \R) \) of \( A \) and \( \mu \in \DiffFormSpace^2_+(M, \I \R) \) is a given self-dual \( 2 \)-form.
Moreover, the bundle map \( q: \SpinorBundle^+ M \to \I \ExtBundle^2_+ \CotBundle M \) is induced by the quadratic form \( q: \SpinRep_4^+ \to \I \ExtBundle^2_+ \R^4 \) dual\footnotemark{} to the extended Clifford multiplication \( \cl_+: \ExtBundle^2_+ \R^4 \tensorProd \C \to \End \SpinRep_4^+ \), \ie \( q(\psi) = \cl_+^\T (\psi \tensorProd \psi^\cT) \).
\footnotetext{Alternatively, \( q \) is the momentum map for the natural \( \SUGroup(2) \)-action on \( \SpinRep_4^+ \isomorph \C^2 \) under the identification \( \ExtBundle^2_+ \R^4 \isomorph \R^3 \).}
The commutative diagram~\eqref{eq:seibergWitten:spinCDiag} gives rise to the following group homomorphisms of infinite dimensional Lie groups:
\begin{equationcd}
	  \sFunctionSpace(M, \UGroup(1))
		\to[d, "\iota_2"]
		\to[dr, "\chi \mapsto \chi^2"]
	&
	\\
	  \GauGroup(\SpinCBundle M)
		\to[d]
		\to[r, "\lambda"]
	& \GauGroup(P),
	\\
	  \GauGroup(\FrameBundle M)
	&
\end{equationcd}
where \( \GauGroup(P) \) is the group of gauge transformations on \( P \), \cf \cref{sec:yangMillsASD} for its Fréchet Lie group structure. 
In particular, the current group \( \SectionSpaceAbb{G} \defeq \sFunctionSpace(M, \UGroup(1)) \) acts via gauge transformations on \( \ConnSpace(P) \times \sSectionSpace(\SpinorBundle^+ M) \):
\begin{equation}
	\label{eq:seibergWitten:actionDomain}
	\chi \cdot (A, \psi) = (A - 2 \chi^{-1} \dif \chi, \chi \, \psi).
\end{equation}
To formulate the Seiberg--Witten equations in the general setting discussed in the previous sections, define the map
\begin{equation}\begin{split}
 	\SectionMapAbb{SW}: \ConnSpace(P) \times \sSectionSpace(\SpinorBundle^+ M) &\to \DiffFormSpace^2_+(M, \I \R) \times \sSectionSpace(\SpinorBundle^- M),
 	\\
 	(A, \psi) &\mapsto \bigl(F_A^+ - q(\psi), \Dif_A \psi\bigr).
\end{split}\end{equation}
A straightforward calculation (see, \eg, \parencite[Proposition~2.1.9]{Nicolaescu2000}) shows that \( \SectionMapAbb{SW} \) is equivariant with respect to the following \( \SectionSpaceAbb{G} \)-action on \( \DiffFormSpace^2_+(M, \I \R) \times \sSectionSpace(\SpinorBundle^- M) \):
\begin{equation}
	\label{eq:seibergWitten:actionTarget}
	\chi \cdot (\alpha, \varphi) \mapsto (\alpha, \chi \, \varphi).
\end{equation}
For \( \mu \in \DiffFormSpace^2_+(M, \I \R) \), the Seiberg--Witten moduli space \( \SectionSpaceAbb{M}_\mu \) is defined as the space of solutions \( (A, \psi) \) of the Seiberg--Witten equations~\eqref{eq:seibergWitten:seibergWitten} modulo \( \SectionSpaceAbb{G} \), \ie
\begin{equation}
	\SectionSpaceAbb{M}_\mu = \SectionMapAbb{SW}^{-1}(\mu, 0) \slash \SectionSpaceAbb{G}.
\end{equation}

Let us verify that the assumptions of \cref{prop:normalFormEquivariantMap:elliptic} are met for the model under consideration:
\begin{enumerate}
	\item 
		The stabilizer of \( (\mu, 0) \in \DiffFormSpace^2_+(M, \I \R) \times \sSectionSpace(\SpinorBundle^- M) \) under the action~\eqref{eq:seibergWitten:actionTarget} is the whole group \( \SectionSpaceAbb{G} \), which is a geometric tame Fréchet Lie group with Lie algebra \( \sFunctionSpace(M, \I \R) \).
	\item
		The natural action of \( \GauGroup(P) \) on \( \ConnSpace(P) \) is proper and admits a slice at every point as discussed in \cref{sec:yangMillsASD}.
		Thus, also the action of \( \SectionSpaceAbb{G} \) on \( \ConnSpace(P) \) admits a slice \( \SectionSpaceAbb{S}_A \) at every \( A \in \ConnSpace(P) \).
		The stabilizer \( \SectionSpaceAbb{G}_A \) of \( A \) is \( \UGroup(1) \), viewed as constant maps.
		Moreover, the linear continuous action \( (z, \psi) \mapsto z \, \psi \) of the compact group \( \SectionSpaceAbb{G}_A \isomorph \UGroup(1) \) on the Fréchet space \( \sSectionSpace(\SpinorBundle^+ M) \) admits a slice \( \SectionSpaceAbb{S}_\psi \) at every \( \psi \in \sSectionSpace(\SpinorBundle^+ M) \) according to \parencite[Theorem~3.15]{DiezSlice}.
		By \parencite[Proposition~3.29]{DiezSlice}, the \( \SectionSpaceAbb{G} \)-action~\eqref{eq:seibergWitten:actionDomain} on \( \ConnSpace(P) \times \sSectionSpace(\SpinorBundle^+ M) \) has a slice \( \SectionSpaceAbb{S}_{A, \psi} = \SectionSpaceAbb{S}_A \times \SectionSpaceAbb{S}_\psi \) at \( (A, \psi) \).
		The existence of slices in the Banach setting is well known, see \eg \parencite[Proposition~2.2.7]{Nicolaescu2000}.
	\item
		The action~\eqref{eq:seibergWitten:actionTarget} of \( \SectionSpaceAbb{G} \) on \( \sSectionSpace(\SpinorBundle^- M) \times \DiffFormSpace^2_+(M, \I \R) \) is clearly linear.
	\item
		The linearization of \( \SectionMapAbb{SW} \) at a point \( (A, \psi) \) is given by
		\begin{equation}\begin{split}
			\tangent_{(A, \psi)} \SectionMapAbb{SW}: &\DiffFormSpace^1(M, \I \R) \times \sSectionSpace(\SpinorBundle^+ M) \to \DiffFormSpace^2_+(M, \I \R) \times \sSectionSpace(\SpinorBundle^- M),
			\\
			&(\alpha, \varphi) \mapsto \bigl(\dif^+ \alpha - \dot{q}(\psi, \varphi), \Dif_A \varphi + \frac{1}{2} \cl(\alpha, \psi)\bigr),
		\end{split}\end{equation}
		where \( \dot{q}(\psi, \varphi) = \cl_+^\T (\psi \tensorProd \varphi^\cT + \varphi \tensorProd \psi^\cT) \) and \( \dif^+ \) denotes the self-dual part of the exterior derivative, see \parencite[p.~125]{Nicolaescu2000}.
		Moreover, the action of the Lie algebra \( \sFunctionSpace(M, \R) \) induced by the action~\eqref{eq:seibergWitten:actionDomain} is given by
		\begin{equation}
			\xi \ldot (A, \psi) = (- 2 \dif \xi, \xi \, \psi) \equiv \tau_{(A, \psi)} (\xi),
		\end{equation}
		for \( \xi \in \sFunctionSpace(M, \I \R) \).

		For a solution \( (A_0, \psi_0) \in \SectionMapAbb{SW}^{-1}(\mu, 0) \) of the Seiberg--Witten equations, the chain~\eqref{eq:normalFormEquivariantMap:tame:chain} here takes the form
		\begin{equationcd}[label=eq:seibergWitten:deformationChain, tikz={column sep=2.5em}]
			\DiffFormSpace^0(M, \I \R) \to[r, "{\tau_{(A, \psi)}}"]
				& \DiffFormSpace^1(M, \I \R) \times \sSectionSpace(\SpinorBundle^+ M) \to[r, "\tangent_{(A, \psi)} \SectionMapAbb{SW}"]
				& \DiffFormSpace^2_+(M, \I \R) \times \sSectionSpace(\SpinorBundle^- M),
		\end{equationcd}
		where \( A \in \SectionSpaceAbb{S}_{A_0} \) and \( \psi \in \SectionSpaceAbb{S}_{\psi_0} \).
		This chain is clearly a chain of linear differential operators tamely parametrized by \( (A, \psi) \).
		At \(  A = A_0 \), the chain~\eqref{eq:seibergWitten:deformationChain} is a complex that, after ignoring the zeroth order contributions (which are compact operators and thus do not change ellipticity or the index), is given by
		\begin{equationcd}[label=eq:seibergWitten:deformationComplexReduced, tikz={column sep=2.8em}]
			\DiffFormSpace^0(M, \I \R) \to[r, "{(\dif, 0)}"]
				& \DiffFormSpace^1(M, \I \R) \times \sSectionSpace(\SpinorBundle^+ M) \to[r, "{(\dif^+, \Dif_{A_0}\,) }"]
				& \DiffFormSpace^2_+(M, \I \R) \times \sSectionSpace(\SpinorBundle^- M).
		\end{equationcd}
		It is known (\eg, \parencite[Lemma~2.2.10]{Nicolaescu2000}) that this complex is elliptic and that its Euler characteristic is given by
		\begin{equation}
			\frac{1}{4} \bigl(2 \, \chi_M + 3 \, \sigma_M - \chernClass_1(P)^2\bigr), 
		\end{equation}
		where \( \chi_M \) is the Euler number and \( \sigma_M \) is the signature of \( M \).
\end{enumerate}
Hence, by \cref{prop:normalFormEquivariantMap:elliptic}, the map \( \SectionMapAbb{SW} \) can be brought into an equivariant normal form at every solution \( (A, \psi) \) of the Seiberg--Witten equations.
Moreover, as a consequence of \cref{prop:quotientsLevelSets:kuranishiByNormalForm,rem:quotientsLevelSets:fredholmFiniteDimKranishiStructure}, we obtain the following description of the local geometry of the moduli space \( \SectionSpaceAbb{M}_\mu \).

\begin{thm}
	Let \( M \) be a closed oriented Riemannian manifold of dimension \( 4 \) endowed with a \( \SpinCGroup \)-structure.
	For every \( \mu \in \DiffFormSpace^2_+(M, \I \R) \), the Seiberg--Witten moduli space \( \SectionSpaceAbb{M}_\mu \) admits a finite-dimensional Kuranishi chart at every point.
	Moreover, the virtual dimension of \( \SectionSpaceAbb{M}_\mu \) is given by
	\begin{equation}
			\frac{1}{4} \bigl(\chernClass_1(P)^2 - 2 \, \chi_M - 3 \, \sigma_M\bigr).
			\qedhere
		\end{equation}
\end{thm}
Let us describe the constructed Kuranishi charts on \( \SectionSpaceAbb{M}_\mu \) in more detail.
For this purpose, let \( (A, \psi) \) be a solution of the Seiberg--Witten equations.
According to \cref{rem:quotientsLevelSets:fredholmFiniteDimKranishiStructure}, the linear spaces occurring in the definition of a Kuranishi chart at \( \equivClass{A, \psi} \in \SectionSpaceAbb{M}_\mu \) are given by the finite-dimensional cohomology groups
\begin{equation}\begin{split}
	\SectionSpaceAbb{E} &= \deRCohomology^{1}_{A, \psi}(M) \equiv \ker \tangent_{(A, \psi)} \SectionMapAbb{SW} \slash \img \tau_{(A, \psi)},
	\\
	\SectionSpaceAbb{F} &= \deRCohomology^{2}_{A, \psi}(M) \equiv \DiffFormSpace^2_+(M, \I \R) \times \sSectionSpace(\SpinorBundle^- M) \slash \img \tangent_{(A, \psi)} \SectionMapAbb{SW}.
\end{split}\end{equation}
The obstruction map is a \( \SectionSpaceAbb{G}_{A, \psi} \)-equivariant map
\begin{equation}
	\SectionMapAbb{s}: \deRCohomology^{1}_{A, \psi}(M) \supseteq \SectionSpaceAbb{V} \to \deRCohomology^{2}_{A, \psi}(M),
\end{equation}
where \( \SectionSpaceAbb{V} \) is a \( \SectionSpaceAbb{G}_{A, \psi} \)-invariant, open neighborhood of \( 0 \) in \( \deRCohomology^{1}_{A, \psi}(M) \).
Finally, the moduli space \( \SectionSpaceAbb{M}_\mu \) in a neighborhood of \( \equivClass{A, \psi} \) is modeled on the quotient \( \SectionMapAbb{s}^{-1}(0) \slash \SectionSpaceAbb{G}_{A, \psi} \).
In this way, we recover the well-known result concerning the local structure of \( \SectionSpaceAbb{M}_\mu \), see \eg \parencite[Proposition~2.2.16]{Nicolaescu2000}.

\subsection{Application: Pseudoholomorphic immersions}
\label{sec:pseudoholomorphicCurves}

In this section, we discuss the moduli space of pseudoholomorphic curves.
For the sake of simplicity, we consider only the simplest case of the universal Gromov–-Witten moduli space without marked points. 

Let \( (M, \omega) \) be a finite-dimensional symplectic manifold endowed with a compatible almost complex structure \( J \).
For every compact Riemann surface \( (\Sigma, j) \) with complex structure \( j \), consider the Cauchy--Riemann operator
\begin{equation}
	\difpBar_{j, J} \, u = \frac{1}{2} (\tangent u + J \circ \tangent u \circ j),
\end{equation}
where \( u: \Sigma \to M \) is a smooth map.
In the following, we restrict attention to the case when \( u \) is an immersion.
This is to ensure the existence of slices.
Denote the space of immersions of \( \Sigma \) into \( M \) by \( \ImmersionSpace(\Sigma, M) \).
It is an open and dense subset of the Fréchet manifold \( \sFunctionSpace(\Sigma, M) \), see \parencite[Corollary~6.13]{Michor1980}.
Let \( \beta \in \sHomology_2(M, \Z) \) be a homology class, and consider the open subset \( \ImmersionSpace_\beta(\Sigma, M) \) of \( \ImmersionSpace(\Sigma, M) \) consisting of immersions \( u: \Sigma \to M \) with \( u_* \equivClass{\Sigma} = \beta \). 
Denote the space of complex structures on \( \Sigma \) by \( \SectionSpaceAbb{J}(\Sigma) \).
As every almost complex structure on a surface is integrable, \( \SectionSpaceAbb{J}(\Sigma) \) can be identified with the space of reductions of the frame bundle \( \FrameBundle \Sigma \) to \( \UGroup(1) \), \ie, \( \SectionSpaceAbb{J}(\Sigma) = \sSectionSpace(\FrameBundle \Sigma \times_{\GLGroup(2, \R)} \GLGroup(2, \R) \slash \GLGroup(1, \C)) \), and thus it carries a natural Fréchet manifold structure.
In order to realize \( \difpBar_{j, J} \) as a map between infinite-dimensional manifolds, let us introduce the Fréchet vector bundle \( \SectionSpaceAbb{E} \to \SectionSpaceAbb{J}(\Sigma) \times \ImmersionSpace(\Sigma, M) \) whose fiber over \( (j, u) \in \SectionSpaceAbb{J}(\Sigma) \times \ImmersionSpace(\Sigma, M) \) is the space \( \DiffFormSpace^{0,1}(\Sigma, u^* \TBundle M) \) of smooth \( (j, J) \)-antilinear \( 1 \)-forms on \( \Sigma \) with values in \( u^* \TBundle M \).
The Cauchy--Riemann operator yields a smooth section
\begin{equation}
	\SectionMapAbb{CR}: \SectionSpaceAbb{J}(\Sigma) \times \ImmersionSpace_\beta(\Sigma, M) \to \SectionSpaceAbb{E}, \qquad (j, u) \mapsto \difpBar_{j, J} u .
\end{equation}
Note that \( \SectionMapAbb{CR} \) is equivariant with respect to the natural reparametrization actions of the Fréchet Lie group \( \DiffGroup (\Sigma) \) of diffeomorphisms of \( \Sigma \).
The associated moduli space
\begin{equation}
	\SectionSpaceAbb{M}_{\beta, J} = \SectionMapAbb{CR}^{-1}(0) \slash \DiffGroup(\Sigma)
\end{equation}
is the universal moduli space of unparameterized pseudoholomorphic immersions representing \( \beta \in \sHomology_2(M, \Z) \).
Here \( 0 \) denotes the zero section in \( \SectionSpaceAbb{E} \).

Let us verify that the assumptions of \cref{prop:normalFormEquivariantMap:elliptic} are met for the model under consideration.
To be precise, a slight variation of \cref{prop:normalFormEquivariantMap:elliptic} is necessary as \( \SectionSpaceAbb{M}_{\beta, J} \) is defined via the preimage of a submanifold instead of a single point.
We thus have to establish the existence of a normal form relative to the submanifold as in \cref{prop:normalFormMap:banachRelative}.
Since we are interested in the preimage of the zero section in \( \SectionSpaceAbb{E} \), this simply amounts to replacing the derivative of the section in~\eqref{eq:normalFormEquivariantMap:elliptic:chain} by its vertical derivative.
\begin{enumerate}
	\item 
		The zero section of \( \SectionSpaceAbb{E} \) is invariant under the action of  \( \DiffGroup (\Sigma) \), which is a geometric tame Fréchet Lie group with Lie algebra \( \VectorFieldSpace(\Sigma) \).
	\item
		The reparametrization action of \( \DiffGroup (\Sigma) \) on \( \ImmersionSpace(\Sigma, M) \) admits a slice \( \SectionSpaceAbb{S}_u \) at every immersion \( u \in \ImmersionSpace(\Sigma, M) \), see \parencite{CerveraMascaroEtAl1991}.
		According to \parencite[Lemma~3.1]{CerveraMascaroEtAl1991}, the stabilizer \( \DiffGroup_u(\Sigma) \) of \( u \) is a finite subgroup of \( \DiffGroup(\Sigma) \).
		Since every invariant open set is a slice for the action of a finite group, the induced action of \( \DiffGroup_u(\Sigma) \) on \( \SectionSpaceAbb{J}(\Sigma) \) admits a slice \( \SectionSpaceAbb{S}_j \) at every \( j \in \SectionSpaceAbb{J}(\Sigma) \).
		By \parencite[Proposition~3.29]{DiezSlice}, the \( \DiffGroup (\Sigma) \)-action on \( \SectionSpaceAbb{J}(\Sigma) \times \ImmersionSpace(\Sigma, M) \) has a slice \( \SectionSpaceAbb{S}_{j, u} = \SectionSpaceAbb{S}_j \times \SectionSpaceAbb{S}_u \) at \( (j, u) \).
	\item
		The action of \( \DiffGroup(\Sigma) \) on \( \SectionSpaceAbb{E} \) is fiberwise linear.
	\item
		The tangent space of \( \SectionSpaceAbb{J}(\Sigma) \) at \( j \) is the space of sections of the bundle \( \EndBundle_j (\TBundle \Sigma) \) whose fiber at \( x \in \Sigma \) is the space of linear maps \( \xi: \TBundle_x \Sigma \to \TBundle_x \Sigma \) satisfying \( j \circ \xi + \xi \circ j = 0 \).
		The vertical tangent map of \( \SectionMapAbb{CR} \) at a point \( (j, u) \in \SectionMapAbb{CR}^{-1}(0) \) is given by
		\begin{equation}\begin{split}
			\vtangent_{(j, u)} \SectionMapAbb{CR}: &\EndBundle_j (\TBundle \Sigma) \times \sSectionSpace(u^* \TBundle M) \to \DiffFormSpace^{0,1}(\Sigma, u^* \TBundle M),
			\\
			&(\xi, X) \mapsto \Bigl(\Dif_u \xi + \frac{1}{2} J \circ \tangent u \circ \xi\Bigr),
		\end{split}\end{equation}
		where \( \Dif_u: \sSectionSpace(u^* \TBundle M) \to \DiffFormSpace^{0,1}(\Sigma, u^* \TBundle M) \) is the usual linearized Cauchy--Riemann operator, \cf \parencite[Proposition~3.1.1]{McDuffSalamon2012}.
		As discussed above, we have to use the vertical derivative in place of the normal derivative in the chain~\eqref{eq:normalFormEquivariantMap:tame:chain}.
		It here takes the form
		\begin{equationcd}[label=eq:pseudoholomorphic:deformationChain]
			\VectorFieldSpace(\Sigma) \to[r, "{(\difLie l, - \tangent v)}"]
				& \EndBundle_j (\TBundle \Sigma) \times \sSectionSpace(u^* \TBundle M) \to[r, "\vtangent_{(l, v)} \SectionMapAbb{CR} \,"]
				& \DiffFormSpace^{0,1}(\Sigma, u^* \TBundle M),
		\end{equationcd}
		where \( l \in \SectionSpaceAbb{S}_j \) and \( v \in \SectionSpaceAbb{S}_u \).
		This chain is a chain of linear differential operators tamely parametrized by \( (l, v) \).
		At \(  (l,v) = (j,u) \), the chain~\eqref{eq:pseudoholomorphic:deformationChain} is a complex that, after ignoring the zeroth-order contributions (which are compact operators and thus do not change ellipticity or the index), is given by
		\begin{equationcd}[label=eq:pseudoholomorphic:deformationComplexReduced]
			\VectorFieldSpace(\Sigma) \to[r, "{(\difLie j, 0)}"]
				& \EndBundle_j (\TBundle \Sigma) \times \sSectionSpace(u^* \TBundle M) \to[r, "\Dif_u"]
				& \DiffFormSpace^{0,1}(\Sigma, u^* \TBundle M).
		\end{equationcd}
		This complex is elliptic because \( \difLie j: \VectorFieldSpace(\Sigma) \to \EndBundle_j (\TBundle \Sigma) \) is elliptic with index (\cf \parencites[Proposition~9.4.4]{Oh2015}[Theorem~C.1.10]{McDuffSalamon2012})
		\begin{equation}
			\ind \difLie j = (2 - 2g) + 2 \, \dualPair{\chernClass_1 (\TBundle^{1,0} \Sigma)}{\equivClass{\Sigma}} = 3 \, (2 - 2g)
		\end{equation}
		and \( \Dif_u: \sSectionSpace(u^* \TBundle M) \to \DiffFormSpace^{0,1}(\Sigma, u^* \TBundle M) \) is elliptic with index (\cf \parencite[Theorem~C.1.10]{McDuffSalamon2012})
		\begin{equation}
			\ind \Dif_u = n \, (2 - 2g) + 2 \, \dualPair{\chernClass_1 (u^* \TBundle M)}{\equivClass{\Sigma}},
		\end{equation}
		where \( 2n = \dim_\R M \).
		Thus, the Euler characteristic of~\eqref{eq:pseudoholomorphic:deformationComplexReduced} is given by
		\begin{equation}
			\ind \difLie j - \ind \Dif_u = (3 - n) \, (2 - 2g) - 2 \, \dualPair{\chernClass_1 (u^* \TBundle M)}{\equivClass{\Sigma}}. 
		\end{equation}
\end{enumerate}
Hence, by \cref{prop:normalFormEquivariantMap:elliptic} and the tame Fréchet version of \cref{prop:normalFormMap:banachRelative}, the map \( \SectionMapAbb{CR} \) can be brought into an equivariant normal form relative to the zero section at every pseudoholomorphic curve \( (j, u) \).
Moreover, as a consequence of \cref{prop:quotientsLevelSets:kuranishiByNormalForm,rem:quotientsLevelSets:fredholmFiniteDimKranishiStructure}, we obtain the following description of the local geometry of the moduli space \( \SectionSpaceAbb{M}_{\beta, J} \).

\begin{thm}
	Let \( \Sigma \) be a closed, oriented surface and let \( (M, \omega, J) \) be a symplectic manifold of dimension \( 2n \) endowed with a compatible almost complex structure \( J \).
	For every \( \beta \in \sHomology_2(M, \Z) \), the moduli space \( \SectionSpaceAbb{M}_{\beta, J} \) of unparameterized pseudoholomorphic immersions admits a finite-dimensional Kuranishi chart at every point \( \equivClass{j, u} \in \SectionSpaceAbb{M}_{\beta, J} \).
	Moreover, the virtual dimension of \( \SectionSpaceAbb{M}_{\beta , J} \) is given by
	\begin{equation}
		(n - 3) \, (2 - 2g) + 2 \, \dualPair{\chernClass_1 (M)}{\beta}.
		\qedhere
	\end{equation}
\end{thm}

In the study of pseudoholomorphic curves, one is usually not only interested in immersions but also in more general smooth or nodal curves.
We have restricted attention to the open subset \( \ImmersionSpace(\Sigma, M) \subseteq \sFunctionSpace(\Sigma, M) \) of immersions to ensure that the reparametrization action of \( \DiffGroup (\Sigma) \) admits slices.
The discussion in \parencite{CerveraMascaroEtAl1991} shows that, in general, one cannot expect the existence of slices for the reparametrization action on the space \( \sFunctionSpace(\Sigma, M) \) of all smooth maps.
In particular, the slice property \iref{i::slice:SliceDefOnlyStabNotMoveSlice} is problematic and it is not even clear whether the quotient \( \sFunctionSpace(\Sigma, M) \slash \DiffGroup (\Sigma) \) be Hausdorff.
However, as we have discussed in \cref{rem:quotientsLevelSets:kuranishiNotNeedsAllSliceProperties}, for the Kuranishi structure of the moduli space one only needs \iref{i::slice:SliceDefOnlyStabNotMoveSlice} to hold for solutions \( (j, u) \) of the Cauchy--Riemann equation \( \SectionMapAbb{CR}(j, u) = 0 \).
Such a weaker version of \iref{i::slice:SliceDefOnlyStabNotMoveSlice} can be established using Gromov's Compactness Theorem as in \parencites[p.~999]{FukayaOno1999}[Lemma~20.15]{FukayaOhOhtaEtAl2012}.
Using this observation, one can then also construct Kuranishi charts on the moduli space of unparameterized pseudoholomorphic (stable) curves.

The existence of Kuranishi charts on \( \SectionSpaceAbb{M}_{\beta, J} \) is well known, see, \eg, \parencite[Theorem~12.9]{FukayaOno1999} and references therein.
However, the traditional approach using Banach spaces faces serious technical problems due to the fact that the reparametrization action is not smooth with respect to maps of a given Sobolev class, see, for example, \parencite[Section~3]{McDuffWehrheim2015} for a detailed discussion.
These difficult issues have lead to considerable discussion about the correctness of the foundations of the theory of pseudoholomorphic curves, see \parencite{FukayaOhOhtaEtAl2012,McDuffWehrheim2015,FukayaOhOhtaEtAl2017,McDuffWehrheim2015a} and references therein.
Moreover, these issues inspired the development of the so-called scale calculus that provides the analytic backbone of the polyfold framework \parencite{HoferWysockiZehnder2017}.
Our construction using the Nash--Moser theorem has the important advantage that we circumvent such technical problems completely, since the reparametrization action of the diffeomorphism group on the space of smooth maps is a tame smooth action.
Thus, phrasing the problem in the well-established framework of tame Fréchet manifolds allows us to concentrate on the geometric constructions rather than the analytic details.

\section{Outlook}
In this paper, local normal form theorems for smooth equivariant maps between infinite-dimensional manifolds are established in various analytic settings.
As we have seen, these equivariant normal form theorems are a powerful tool to study the local structure of moduli spaces and to show that these moduli spaces carry the structure of a Kuranishi space, \ie, they are locally modeled on the quotient by a compact group of the zero set of a smooth map.
Using this general framework, we were able to give short and unified proofs that the moduli space of anti-self-dual instantons, the Seiberg–Witten
moduli space and the moduli space of pseudoholomorphic curves admit Kuranishi charts.

The normal forms developed in this paper are flexible enough to respect further geometric data.
In forthcoming work \parencite{DiezSingularReduction} (see also \parencite{DiezThesis}), we refine and adapt the techniques developed in this paper to Hamiltonian actions on infinite-dimensional symplectic manifolds.
We construct a normal form for equivariant momentum maps in the spirit of the classical Marle---Guillemin---Sternberg normal form.
The fundamental idea is to use \cref{prop:normalFormEquivariantMap:abstract} to bring the momentum map into a normal form and then gain control over the behavior of the singular part of the momentum map using the symplectic form.
With the help of this normal form, we then prove a singular symplectic reduction theorem including the analysis of the stratification into symplectic manifolds. 
This strategy is completely different to the traditional proof of the finite-dimensional Marle---Guillemin---Sternberg Theorem which uses an equivariant version of Darboux's Theorem.
A simple counterexample by \textcite{Marsden1972} shows that the Darboux Theorem fails spectacularly already for weakly symplectic Banach manifolds, so this approach is not possible in infinite dimensions.
In physics, one is mainly interested in the case where the symplectic manifold is a cotangent bundle.
A normal form of the momentum map for a lifted action on an infinite-dimensional cotangent bundle is established in \parencite{DiezRudolphReduction}.

Our results concerning equivariant normal forms in infinite dimensions and the techniques developed in the previous chapters open many exciting avenues for further research.
We list some relevant open problems and fundamental issues:
\begin{enumerate}
	\item 
		It would be very interesting to extend the discussion of the normal form of a smooth map to higher orders.
		One would expect that the knowledge of higher-order terms of the Taylor expansion of a smooth map \( f \) yields further control over the behavior of its singular part \( f_\singularPart \).
		In this way, one would gain deeper insight into the singular structure of the level sets of \( f \).
		This also opens the path toward an infinite-dimensional Morse theory.
	\item
		Studying the local structure of moduli spaces is the first step in an elaborated program to define powerful geometric invariants.
		Usually, one then passes to a compactification and constructs a virtual fundamental cycle for the compactified moduli space.
		Thus, it is desirable to have a general scheme in the Fréchet framework for the issues of compactification and perturbation, similarly to the aim of the polyfold theory of \textcite{HoferWysockiZehnder2017}.
	\item
		In \cref{sec:moduliSpaces}, we studied moduli spaces for which the deformation complex is elliptic.
		The Nash--Moser techniques also allow to study problems that are not accessible to elliptic methods.
		Thus, it would be interesting to apply our general framework to, say, KAM theory, to deformations of fibrations or to normal forms in Poisson geometry.   
\end{enumerate}

\appendix
\section{Inverse Function Theorems}
\label{sec:inverseFunctionTheorem}
\label{sec:tameFrechet}

In this section, we give a brief overview of different generalizations of the classical Inverse Function Theorem to the infinite-dimensional setting.
The primary focus is on Glöckner's Inverse Function Theorem for maps between Banach spaces with parameters in a locally convex space and on the Nash--Moser theorem in the tame Fréchet category.

As a reference point, let us recall the classical version of the Inverse Function Theorem in the Banach setting.
\begin{thm}[\textnormal{Banach version, \parencite[Theorem~I.5.2]{Lang1999}}]
	\label{prop:inverseFunctionTheorem:banach}
	Let \( X, Y \) be Banach spaces and let \( f: X \supseteq U \to Y \) be a smooth map defined on an open neighborhood \( U \) of \( 0 \) in \( X \).
	If \( \tangent_0 f: X \to Y \) is an isomorphism of Banach spaces, then \( f \) is a local diffeomorphism at \( 0 \).   
\end{thm}
\Textcite{Gloeckner2006a,Gloeckner2005a} has established the following generalization of the Banach Inverse Function Theorem to smooth maps depending on parameters in a locally convex space.
Similar results have been obtained in \parencite{Hiltunen1999} (using a slightly stronger notion of differentiability) and in \parencite{Teichmann2001} (using the so-called convenient calculus). 
\begin{thm}[\textnormal{Banach version with parameters, \parencite[Theorem~2.3]{Gloeckner2006a}}]
	\label{prop:inverseFunctionTheorem:banachWithParameters}
	Let \( P \subseteq E \) be an open neighborhood of \( 0 \) in the locally convex space \( E \).
	Let \( X, Y \) be Banach spaces, let \( U \) be an open neighborhood of \( 0 \) in \( X \) and let \( f: E \times X \supseteq P \times U \to Y \) be a smooth map.
	If the partial derivative \( \tangent^2_{(0, 0)} f: X \to Y \) of \( f \) at \( (0, 0) \) with respect to the second variable is an isomorphism of Banach spaces, then the map
	\begin{equation}
		E \times X \supseteq P \times U \to E \times Y,
		\quad
		(p, x) \mapsto \bigl(p, f(p, x)\bigr)
	\end{equation}
	is a local diffeomorphism at \( (0, 0) \).
\end{thm}

We now recall the main notions of the tame Fréchet category and the Nash--Moser Inverse Function Theorem, \cf \parencite{Hamilton1982}.
A Fréchet space \( X \) is called graded if it carries a distinguished increasing fundamental system of seminorms \( \normDot_k \).
A graded Fréchet space is called \emphDef{tame} if the seminorms satisfy an additional interpolation property, which formalizes the idea that \( X \) admits smoothing operators, see \parencite[Definition~II.1.3.2]{Hamilton1982} for the exact statement.
Let \( X \) and \( Y \) be tame Fréchet spaces. 
A continuous (possibly nonlinear) map \( f: X \supseteq U \to Y \) defined on an open subset \( U \subseteq X \) is \emphDef{\( r \)-tame} if it satisfies a local estimate of the form
\begin{equation}
	\norm{f(x)}_k \leq C (1 + \norm{x}_{k+r}).
\end{equation}
Roughly speaking, this means that \( f \) has a maximal loss of \( r \) derivatives.
Moreover, a smooth map \( f \) is called \emphDef{\( r \)-tame smooth} if \( f \) and all its derivatives \( \dif^{(j)} f: U \times X^j \to Y \) are \( r \)-tame.
Let \( Z \) be a tame Fréchet space and assume that \( Z \) is the topological direct sum of closed subspaces \( X \) and \( Y \).
We say that the sum \( Z = X \oplus Y \) is tame if the map \( X \times Y \to Z \) given by \( (x, y) \mapsto x + y \) is a tame isomorphism.

\begin{thm}[\textnormal{Nash--Moser Inverse Function Theorem, \parencite[Section~III.1]{Hamilton1982}}]
	\label{prop:inverseFunctionTheorem:nashMoser}
	Let \( X \) and \( Y \) be tame Fréchet spaces, let \( U \) be an open neighborhood of \( 0 \) in \( X \) and let \( f: X \supseteq U \to Y \) be a tame smooth map. 
	Assume that the derivative \( \tangent f \) has a tame smooth family \( \psi \) of inverses, that is, \( \psi: U \times Y \to X\) is a tame smooth map and \( \psi (x, \cdot): Y \to X \) is inverse to \( \tangent_x f \) for every \( x \in U \).
	Then the map \( f \) is a tame local diffeomorphism at \( 0 \).
\end{thm}
The important point is that the derivative of \( f \) has to be invertible in a \emph{neighborhood} of \( 0 \) and that one requires tame estimates for the inverses.


\section{Slices and orbit type stratification}
\label{sec:calculus:groupActionsSlices}

In this section, we give a brief account of the theory of Lie group actions in the category of locally convex manifolds.
The focus lies on slices for the action and the stratification of the manifold into orbit types.
We refer the reader to \parencite{DiezSlice} for more details.

Let \( M \) be a (locally convex) manifold.
Assume a (locally convex) Lie group \( G \) acts smoothly on \( M \), that is, assume that the action map \( G \times M \to M \) is smooth.
We refer to this setting by saying that \( M \) is a \( G \)-manifold.
The action is often written, using the dot notation, as \( (g, m) \mapsto g \cdot m \).
Similarly, the induced action of the Lie algebra \( \LieA{g} \) of \( G \) is denoted by \( \xi \ldot m \in \TBundle_m M \) for \( \xi \in \LieA{g} \) and \( m \in M \).
Clearly, \( m \mapsto \xi \ldot m \) is the Killing vector field generated by \( \xi \). 
Furthermore, $G \cdot m = \set{g \cdot m \given g \in G} \subseteq M$ is the orbit through \( m \in M \).
The \( G \)-action is called proper if inverse images of compact subsets under the map
\begin{equation}
	G \times M \to M \times M, \qquad (g, m) \mapsto (g \cdot m, m)
\end{equation}
are compact. 

The subgroup $G_m \defeq \set{g \in G \given g \cdot m = m}$ is called the \emphDef{stabilizer subgroup} of \( m \in M \).
It is not known, even for Banach Lie group actions, whether \( G_m \) is always a \emph{Lie} subgroup, see \parencite[Problem~IX.3.b]{Neeb2006}.
However, for proper actions this is the case, see \parencite[Lemma~2.11]{DiezSlice}.
In fact, then \( G_m \) is even a \emphDef{principal Lie subgroup} of \( G \), which means that the natural fibration \( G \to G \slash G_m \) is a principal bundle.
The \( G \)-action is called free if all stabilizer subgroups are trivial.
Two subgroups \( H \) and \( K \) of \( G \) are said to be conjugate if there exists \( a \in G \) such that \( a H a^{-1} = K \); we write \( H \sim K \) is this case.
In view of the equivariance relation \( G_{g \cdot m} = g G_m g^{-1} \), for every \( m \in M \) and \( g \in G \), we can assign to every orbit \( G \cdot m \) the conjugacy class \( (G_m) \), which is called the \emphDef{orbit type} of \( m \).
We put a preorder on the set of orbit types by declaring \( (H) \leq (K) \) for two orbit types, represented by the stabilizer subgroups \( H \) and \( K \), if there exists \( a \in G \) such that \( a H a^{-1} \subseteq K \).
If the action is proper, this preorder is actually a partial order.
For every closed subgroup \( H \subseteq G \), define the following subsets of \( M \):
\begin{align*}
	M_{H} &= \set{m \in M \given G_m = H},
	\\
	M_{(H)} &= \set{m \in M \given (G_m) = (H)}.
\end{align*}
The subset \( M_{H} \) is called the \emphDef{isotropy type subset} and \( M_{(H)} \) is the \emphDef{subset of orbit type \( (H) \)}.
Analogous definitions hold for every subset \( N \subseteq M \), so, for example, \( N_H = N \cap M_H \).

As in finite dimensions, the local structure of the orbit type subsets is studied with the help of slices, \cf \cref{prop:slice:orbitTypeSubsetIsSubmanifold} below.
Slices also play a fundamental role in the construction of the normal form of an equivariant map in \cref{sec:normalFormEquivariantMap}.   
\begin{defn}
	\label{defn:slice:slice}
	Let \( M \) be a \( G \)-manifold.
	A \emphDef{slice} at \( m \in M \) is a submanifold \( S \subseteq M \) containing \( m \) with the following properties:
	\begin{thmenumerate}[label=(SL\arabic*), ref=(SL\arabic*), leftmargin=*] 
		\item \label{i::slice:SliceDefSliceInvariantUnderStab}
			The submanifold \( S \) is invariant under the induced action of the stabilizer subgroup \( G_m \), that is \( G_m \cdot S \subseteq S \).

		\item \label{i::slice:SliceDefOnlyStabNotMoveSlice}
			Any \( g \in G \) with \( (g \cdot S) \cap S \neq \emptyset \) is necessarily an element of \( G_m \). 

		\item \label{i::slice:SliceDefLocallyProduct}
			The stabilizer \( G_m \) is a principal Lie subgroup of \( G \) and the principal bundle \( G \to G \slash G_m \) admits a local section \( \chi: G \slash G_m \supseteq W \to G \) defined on an open neighborhood \( W \) of the identity coset \( \equivClass{e} \) in such a way that the map
			\begin{equation}
				\chi^S: W \times S \to M, \qquad (\equivClass{g}, s) \mapsto \chi(\equivClass{g}) \cdot s
			\end{equation}
			is a diffeomorphism onto an open neighborhood of \( m \), which is called a \emphDef{slice neighborhood} of \( m \).

		\item \label{i::slice:SliceDefPartialSliceSubmanifold}
			The partial slice \( S_{(G_m)} = \set{s \in S \given G_s \text{ is conjugate to } G_m} \) is a closed submanifold of \( S \).

		\item
			\label{i:slice:linearSlice}
			There exist a smooth representation of \( G_m \) on a locally convex space \( X \) and a \( G_m \)-equivariant diffeomorphism \( \iota_S \) from a \( G_m \)-invariant open neighborhood of \( 0 \) in \( X \) onto \( S \) such that \( \iota_S(0) = m \).
			\qedhere
	\end{thmenumerate}
\end{defn}

The notion of a slice is closely related to the concept of a tubular neighborhood.
\begin{prop}[{\parencite[Proposition~2.6.2]{DiezSlice}}]
	\label{prop:slice:sliceToTube}
	Let \( M \) be a \( G \)-manifold.
	For every slice \( S \) at \( m \in M \), the tube map
	\begin{equation}
		\chi^\tube: G \times_{G_m} S \to M, \qquad \equivClass{g, s} \mapsto g \cdot s
	\end{equation}
	is a \( G \)-equivariant diffeomorphism onto an open, \( G \)-invariant neighborhood of \( G \cdot m \) in \( M \).
\end{prop}

In the finite-dimensional context, the existence of slices for proper actions is ensured by Palais' slice theorem \parencite{Palais1961}.
Passing to the infinite-dimensional case, this may no longer be true and additional hypotheses have to be made.
We refer the reader to \parencite{DiezSlice,Subramaniam1986} for general slice theorems in infinite dimensions and to \parencite{AbbatiCirelliEtAl1989,Ebin1970,CerveraMascaroEtAl1991} for constructions of slices for concrete examples.

As in the finite-dimensional case, the existence of slices implies many nice properties of the orbit space.
For example, we have the following.
\begin{prop}[{\parencite[Propositions~4.1 and~4.5]{DiezSlice}}]
	\label{prop:slice:orbitTypeSubsetIsSubmanifold}
	Let \( M \) be a \( G \)-manifold with proper \( G \)-action.
	If the \( G \)-action admits a slice at every point of \( M \), then \( M_{(H)} \) is a submanifold of \( M \).
	Moreover, \( \check{M}_{(H)} = M_{(H)} \slash G \) carries a smooth manifold structure such that the natural projection \( \pi_{(H)}: M_{(H)} \to \check{M}_{(H)} \) is a smooth submersion.  
\end{prop}
If, in addition, a certain approximation property is satisfied, then the orbit type manifolds fit together nicely and the orbit space is a stratified space, see \parencite[Theorem~4.2]{DiezSlice}.
More generally, we have the following stratification result for subsets of \( M \).
\begin{prop}[{\parencite[Proposition~4.7]{DiezSlice}}]
	\label{prop:slice:orbitTypeStratificationSubset}
	Let \( M \) be a \( G \)-manifold with proper \( G \)-action and let \( P \) be a closed \( G \)-invariant subset of \( M \).
	Assume that the \( G \)-action on \( M \) admits a slice \( S \) at every point \( m \in P \) such that the following holds:
	\begin{enumerate}
		\item 
			\( P \intersect S_{(G_m)} \) is a closed submanifold of \( S_{(G_m)} \).
		\item
			For every orbit type \( (K) \leq (G_m) \), the point \( m \) lies in the closure of \( P \intersect S_{(K)} \) in \( S \).
	\end{enumerate}
	Then, the induced partition of \( P \) into the orbit type subsets \( P_{(H)} = P \intersect M_{(H)} \) is a stratification.
	Moreover, under these assumptions, the decomposition of \( \check{P} = P \slash G \) into \( \check{P}_{(H)} = P_{(H)} \slash G \) is a stratification, too.
\end{prop}

For completeness, we include our definition of a stratification here and refer the reader to \parencite{DiezSlice} for further details and comparison with other notions of stratified spaces in the literature.
\begin{defn}
	Let \( X \) be Hausdorff topological space. 
	A partition \( \stratification{Z} \) of \( X \) into subsets \( X_\sigma \) indexed by \( \sigma \in \Sigma \) is called a \emphDef{stratification} of \( X \) if the following conditions are satisfied:
	\begin{thmenumerate}[label=(DS\arabic*), ref=(DS\arabic*), leftmargin=*]
		\item \label{i::stratification:stratumIsManifold} 
			Every piece \( X_\sigma \) is a locally closed, smooth manifold (whose manifold topology coincides with the relative topology).
			We will call \( X_\sigma \) a \emphDef{stratum} of \( X \).
 
		\item \label{i::stratification:frontierCondition} (frontier condition)
			Every pair of disjoint strata \( X_\varsigma \) and \( X_\sigma \) with \( X_\varsigma \cap \closureSet{X_\sigma} \neq \emptyset \) satisfies:
			\begin{thmenumerate}[label=\alph*), ref=(DS2\alph*)]
				\item \label{i:stratification:frontierConditionBoundary}
					\( X_\varsigma \) is contained in the frontier \( \closureSet{X_\sigma} \setminus X_\sigma \) of \( X_\sigma \),
				\item \label{i:stratification:frontierConditionIntersection}
					\( X_\sigma \) does not intersect \( \closureSet{X_\varsigma} \).
					\qedhere
			\end{thmenumerate}
	\end{thmenumerate}
\end{defn}

\begin{refcontext}[sorting=nyt]{}
	\printbibliography
\end{refcontext}

\end{document}